\newtheorem{thrm}{Theorem}
\newtheorem{crllr}[thrm]{Corollary}
\newtheorem{lmm}[thrm]{Lemma}
\newtheorem{bsrvtn}[thrm]{Observation}
\newtheorem{prpstn}[thrm]{Proposition}
\newtheorem{dfntn}[thrm]{Definition}
\theoremstyle{remark}
\newtheorem{rmrk}[thrm]{Remark}
\newtheorem{lgrthm}[thrm]{Algorithm}
\newtheorem{xmpl}[thrm]{Example}
\def\N{\mathbb N}
\def\A{\mathcal A}
\def\B{\mathcal B}
\def\S{\mathcal S}
\def\uu{\mathbf u}
\def\vv{\mathbf v}
\def\dd{\mathbf d}
\def\zz{\mathbf z}
\def\ff{\mathbf f}
\def\gg{\mathbf g}
\begin{document}

\title{Derivated sequences of complementary symmetric Rote sequences}

\author{Kateřina Medková} 
\author{Edita Pelantová} 
\author{Laurent Vuillon}

\address[K. Medková, E. Pelantová]{Faculty of Nuclear Sciences and Physical Engineering, Czech Technical Univ. in Prague, Trojanova 13, 120 00, Prague 2, Czech Republic}

\address[L. Vuillon]{Univ. Grenoble Alpes, Univ. Savoie Mont Blanc, CNRS, LAMA, 73000 Chamb\'ery, France}

\email{katerinamedkova@gmail.com, Edita.Pelantova@fjfi.cvut.cz, 
Laurent.Vuillon@univ-smb.fr}

\date{\today}

\begin{abstract}
Complementary symmetric Rote sequences are binary sequences which have factor complexity $\mathcal{C}(n) = 2n$ for all integers $n \geq 1$ and whose languages are closed under the exchange of letters.   
These sequences are intimately linked to Sturmian sequences. 
Using this connection we investigate the return words and the derivated sequences to the prefixes of any complementary symmetric Rote sequence $\bf v$ which is associated with a standard Sturmian sequence $\bf u$. 
We show that any non-empty prefix of $\bf v$ has three return words.
We prove that any derivated sequence of $\bf v$ is coding of three interval exchange transformation and we determine the parameters of this transformation. 
We also prove that ${\bf v}$ is primitive substitutive if and only if $\bf u$ is primitive substitutive. 
Moreover, if the sequence $\bf u$ is a fixed point of a primitive morphism, then all derivated sequences of $\bf v$ are also fixed by primitive morphisms.  
In that case we provide an algorithm for finding these fixing morphisms.   
\medskip

\noindent \textit{Keywords: } Derivated Sequence, Return Word, Rote Sequence, Sturmian Sequence

\noindent \textit{2000MSC: } 68R15
\end{abstract}

\maketitle

\section{Introduction}
The notion of return words and derivated sequences has been introduced by Durand in \cite{Dur98} and seems to be a powerful tool for studying the structure of aperiodic infinite sequences, and so also of the corresponding dynamical systems. 

A return word can be considered as a symbolical analogy of return time occurring in the theory of dynamical systems.
Let $\uu = u_0u_1u_2 \cdots$ be a sequence and let $w = u_iu_{i+1} \cdots u_{i+n-1}$ be its factor. 
The index $i$ is an occurrence of $w$. 
A return word to $w$ is a word $u_iu_{i+1} \cdots u_{j-1}$  with $i < j$ being two consecutive occurrences of $w$. 

Return words are well understood in the case of Sturmian sequences, i.e. aperiodic sequences with the lowest possible factor complexity $C(n) = n+1$ for all $n \in \N$.  
They can be also seen as the coding of rotation with an irrational angle $\alpha$ on the unit circle with the partition in two intervals of lengths $\alpha$ and $1- \alpha$, respectively. 

The third author characterizes Sturmian sequences as sequences with two return words to each their factor in \cite{Vui01}. 
Similarly the paper \cite{BaPeSt08} is dedicated to investigation of  sequences with a fixed number of return words to any factors, in particular, Arnoux-Rauzy sequences and sequences coding interval exchange transformations are of this type, see \cite{Vui00}.
Besides, return words in episturmian sequences were described in \cite{JuVui00} while the description of return words in the coding of rotations was used to show their fullness in \cite{BlBrLaVui09}. 

A derivated sequence expresses the order of return words in the sequence $\uu$. 
More precisely, if $w$ is a prefix of $\uu$ with $k$ return words $r_1$, $r_2, \ldots, r_k$, then $\uu$ can be written as the concatenation of these return words: $\uu = r_{i_1}r_{i_2}r_{i_3} \cdots$.
Then the derivated sequence of $\uu$ to the prefix $w$ is the sequence $\dd = i_1i_2i_3 \cdots$ over an alphabet of size $k$.  

Durand's result from \cite{Dur98} states that a sequence is primitive substitutive if and only if its number of derivated sequences is finite. 
Now the goal is to understand the structure of the derivated sequences. 
Derivated sequences of standard Sturmian sequences were investigated in \cite{ArBr} and the derivated sequences of fixed points of primitive Sturmian morphisms were described in \cite{KlMePeSt18}.

Recently, new developments are done to understand the structure of more complicated objects, e.g. acyclic, neutral and tree sequences introduced in \cite{6peopleDEF}. 
Return words in sequences coding linear involutions were studied in \cite{6people} and the number of return words for more general neutral sequences was determined in \cite{DoPe}. 
In \cite{4people+Durand} the properties of return words and derivated sequences were exploited for the characterization of substitutive tree sequences.

In this paper, we study complementary symmetric Rote sequences, i.e. the binary sequences with factor complexity $C(n)= 2n$ for all $n \geq 1$ whose languages are closed under the exchange of letters. 
These sequences are not tree, but they represent an interesting example of neutral sequences with characteristics $1$.
They are named after G. Rote, who proposed several constructions of these sequences in \cite{Ro94}. 
For example, he constructed them as projections of fixed points over a four letter alphabet (see Section \ref{otherview} of our paper) or as the coding of irrational rotations on a unit circle with the partition on two intervals of length $1/2$. 
Later on, they were also constructed using palindromic and pseudopalindromic closures, see \cite{BlPaTrVui13}.      
This construction was proposed and firstly applied to the Thue-Morse sequence in \cite{deDe} and later extended in \cite{PeSt16} to a broader class of sequences.

Our techniques are based on the close relation between complementary symmetric Rote sequences and Sturmian sequences shown in \cite{Ro94}: a sequence $\vv = v_0v_1v_2$ is a complementary symmetric Rote sequence if and only if its difference sequence $\uu$, which is defined by $u_i = v_{i+1}- v_i \mod 2$, is a Sturmian sequence.  
In fact, we investigate the consequences of this relation, see Sections \ref{S_Rote}, \ref{S_ReturnWords} and \ref{types}. 
We also use the description of derivated sequences of Sturmian sequences as studied in detail in \cite{KlMePeSt18}. 
Here we focus on complementary symmetric Rote sequences which are associated with standard Sturmian sequences. 

First we recall needed terminology and notations in Section \ref{S_Preliminaries}.
Section \ref{S_ReturnWords} is dedicated to return words: in Theorem \ref{Thm_ReturnWordsNumber} we show that every non-empty prefix of any complementary symmetric Rote sequence $\vv$ has three return words.  
In other words, all derivated sequences of $\vv$ are over a ternary alphabet.
Then we proceed with the study of derivated sequences.  
In Proposition \ref{Prop_ThreeExchange} we prove that any derivated sequence of $\vv$ is the coding of a three interval exchange transformation and we determine the parameters of this transformation.
Then in Theorem \ref{Substitutive} and Lemma \ref{notFixed} we concentrate on the question of substitutivity of Rote sequences.  
In the case when the associated standard Sturmian sequence ${\bf u}$ is fixed by a primitive morphism, Corollary \ref{Thm_Number} estimates the number of distinct derivated sequences of $\vv$ from above and Algorithm \ref{Algo_FixingMorphism} provides a list of all derivated sequences of $\vv$.  
Section \ref{otherview} compares our techniques with the original Rote's construction of substitutive Rote sequences and the last section collects related open questions.

\section{Preliminaries} \label{S_Preliminaries}

\subsection{Sequences and morphisms}

An \textit{alphabet} $\A$ is a finite set of symbols called \textit{letters}.
A \textit{word} over $\A$ of \textit{length} $n$ is a string $u = u_0u_1 \cdots u_{n-1}$, where $u_i \in \A$ for all $i \in \{0,1, \ldots, n-1\}$. The length of $u$ is denoted by $|u|$.
The set of all finite words over $\A$ together with the operation of concatenation form a monoid $\A^*$.
Its neutral element is the \textit{empty word} $\varepsilon$ and we denote $\A^+ = \A^* \setminus \{\varepsilon\}$.

If $u = xyz$ for some $x,y,z \in \A^*$, then $x$ is a \textit{prefix} of $u$, $z$ is a \textit{suffix} of $u$ and $y$ is a \textit{factor} of $u$.

To any word $u$ over $\A$ with the cardinality $\#\A = d$  we assign the vector $V_u \in \N^{d}$ defined as $(V_u)_a = |u|_a$ for all $a \in \A$, where $|u|_a$ is the number of letters $a$ occurring in $u$. 
The vector $V_u$ is usually called the \textit{Parikh vector} of $u$.

A \textit{sequence} over $\A$ is an infinite string $\uu = u_0u_1u_2 \cdots$, where $u_i \in \A$ for all $i \in \N = \{0,1,2, \ldots\}$.
We always denote sequences by bold letters.
The set of all sequences over $\A$ is denoted $\A^\N$.
A sequence $\uu$ is \textit{eventually periodic} if $\uu = vwww \cdots = v(w)^\infty$ for some $v \in \A^*$ and $w \in \A^+$, moreover, $\uu$ is \textit{purely periodic} if $\uu = www \cdots = w^\infty$.
Otherwise $\uu$ is \textit{aperiodic}.

A \textit{factor} of $\uu$ is a word $y$ such that $y = u_iu_{i+1}u_{i+2} \cdots u_{j-1}$ for some $i, j \in \N$, $i \leq j$.
The index $i$ is called an \textit{occurrence} of the factor $y$ in $\uu$.
In particular, if $i = j$, the factor $y$ is the empty word $\varepsilon$ and any index $i$ is its occurrence.
If $i=0$, the factor $y$ is a \textit{prefix} of $\uu$.

If each factor of $\uu$ has infinitely many occurrences in $\uu$, the sequence $\uu$ is \textit{recurrent}.
Moreover, if the distances between two consecutive occurrences are bounded, $\uu$ is \textit{uniformly recurrent}.

The \textit{language} $\mathcal{L}(\uu)$ of the sequence $\uu$ is the set of all factors of $\uu$.
A factor $w$ of $\uu$ is \textit{right special} if both words $wa$ and $wb$ are factors of $\uu$ for at least two distinct letters $a,b \in \A$.
Analogously we define the \textit{left special} factor.
The factor is \textit{bispecial} if it is both left and right special.
Note that the empty word $\varepsilon$ is the bispecial factor if at least two distinct letters occur in $\uu$.

The \textit{factor complexity} of a sequence $\uu$ is the mapping $\mathcal{C}_\uu: \N \to \N$ defined by
$$\mathcal{C}_\uu(n) = \# \{w \in \mathcal{L}(\uu) : |w| =  n \}\, .$$
A classical result of Hedlund and Morse \cite{HeMo} says that a sequence is eventually periodic if and only if its factor complexity is bounded.
The factor complexity of any aperiodic sequence $\uu$ satisfies $\mathcal{C}_\uu(n) \geq n+1$ for every $n \in \N$.

A \textit{morphism} from a monoid $\A^*$ to a monoid $\B^*$ is a mapping $\psi: \A^* \to \B^*$ such that $\psi(uv) = \psi(u)\psi(v)$  for all $u, v \in \A^*$.
In particular, if $\A = \B$, $\psi$ is a morphism over $\A$.
The domain of a morphism $\psi$ can be naturally extended to $\A^\N$ by
$$\psi(\uu)=\psi(u_0u_1u_2 \cdots) = \psi(u_0)\psi(u_1)\psi(u_2) \cdots\,. $$

The \textit{matrix} of a morphism $\psi$ over $\A$ with the cardinality $\#\A = d$ is the matrix $M_\psi \in \N^{d\times d}$ defined as $(M_\psi)_{ab} = |\psi(a)|_b$ for all $a,b \in \A$.
The Parikh vector of the $\psi$-image of a word $w\in \A^*$ can be obtained via multiplication by the matrix $M_\psi$, i.e.
$V_{\psi(w)} = M_\psi V_w$.

The morphism is \textit{primitive} if there is a positive integer $k$ such that all elements of the matrix $M_\psi^k$ are positive.
A \textit{fixed point} of a morphism $\psi$ is a sequence $\uu$ such that $\psi(\uu) = \uu$.
It is well known that all fixed points of a primitive morphism have the same language.
The sequence $\uu \in \A^\N$ is \textit{primitive substitutive} if $\uu = \sigma(\vv)$ for a morphism $\sigma: \B^* \to \A^*$ and a sequence $\vv \in \B^\N$ which is a fixed point of a primitive morphism over $\B$.

\subsection{Derivated sequences}

Consider a prefix $w$ of a recurrent sequence $\uu$.
Let $i < j$ be two consecutive occurrences of $w$ in $\uu$.
Then the word $u_iu_{i+1} \cdots u_{j-1}$ is a \textit{return word} to $w$ in $\uu$. The set of all return words to $w$ in $\uu$ is denoted $\mathcal{R}_\uu(w)$.

If the sequence $\uu$ is uniformly recurrent, the set $\mathcal{R}_\uu(w)$ is finite for each prefix $w$, i.e. $\mathcal{R}_\uu(w) = \{r_0, r_1, \ldots, r_{k-1}\}$.
Then the sequence $\uu$ can be written as a concatenation of these return words:
$$\uu = r_{d_0}r_{d_1}r_{d_2} \cdots$$
and the \textit{derivated sequence} of $\uu$ to the prefix $w$ is the sequence $\dd_\uu(w) = d_0d_1d_2 \cdots$ over the alphabet of cardinality $\# \mathcal{R}_\uu(w) = k$.
For simplicity, we do not fix this alphabet and we consider two derivated sequences which differ only in a permutation of letters as identical.
The set of all derivated sequences to the prefixes of $\uu$ is
$$\text{Der}(\uu) = \{\dd_\uu(w) : w \ \text{is a prefix of} \ \uu\}\,.$$

If the prefix $w$ is not right special, there is a unique letter $a$ such that $wa$ is a factor of $\uu$.
It means that the occurrences of factors $w$ and $wa$ in $\uu$ coincides, thus $\mathcal{R}_\uu(w) = \mathcal{R}_\uu(wa)$ and $\dd_\uu(w) = \dd_\uu(wa)$.
If $\uu$ is aperiodic, then any prefix of $\uu$ is a prefix of some right special prefix of $\uu$.
Therefore, for an aperiodic uniformly recurrent sequence $\uu$ we can take into consideration only right special prefixes since
\begin{equation}\label{onlyRight}
\text{Der}(\uu) = \{\dd_\uu(w) : w \ \text{is a right special prefix of} \ \uu\}\,.
\end{equation}

In the sequel we will essentially use the following Durand's result.

\begin{thrm}[Durand \cite{Dur98}]\label{Durand}
A sequence $\uu$ is substitutive primitive if and only if the set $\text{\rm Der}(\uu)$ is finite.
\end{thrm}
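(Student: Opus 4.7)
The plan is to prove both implications. For the forward direction, that primitive substitutive implies $\text{Der}(\uu)$ is finite, I would first reduce to the case in which $\uu$ itself is a fixed point of a primitive morphism $\varphi$; the general case $\uu = \sigma(\vv)$ follows by pushing derivated sequences through the coding $\sigma$, with boundary effects contributing only finitely many adjustments. The core tool is a recognizability statement: for any prefix $w$ of $\uu$ and any $N$ sufficiently large, every occurrence of $\varphi^N(w)$ in $\uu$ is the $\varphi^N$-image of an occurrence of $w$. Consequently $\mathcal{R}_\uu(\varphi^N(w)) = \{\varphi^N(r) : r \in \mathcal{R}_\uu(w)\}$, and so $\dd_\uu(\varphi^N(w)) = \dd_\uu(w)$ up to the canonical relabelling of return-word alphabets. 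Since every sufficiently long right-special prefix of $\uu$ occurs as a prefix of $\varphi^N(w_0)$ for some $w_0$ in a finite initial list, \eqref{onlyRight} yields $|\text{Der}(\uu)| < \infty$.

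For the converse, assume $\text{Der}(\uu)$ is finite. First I would extract uniform recurrence of $\uu$ from the hypothesis: bounded cardinality of the return-word sets, combined with the fact that derivated sequences recur, forces gaps between consecutive occurrences of every prefix to be bounded. List the right-special prefixes $w_0, w_1, w_2, \ldots$ in order of increasing length; by \eqref{onlyRight} and pigeonhole there exist $i < j$ with $\dd_\uu(w_i) = \dd_\uu(w_j) = \dd$. Write $\mathcal{R}_\uu(w_i) = \{r_0, \ldots, r_{k-1}\}$ and $\mathcal{R}_\uu(w_j) = \{s_0, \ldots, s_{k-1}\}$, indexed so that the two parallel factorizations of $\uu$ agree letter-by-letter with $\dd$. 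Since $w_i$ is a prefix of $w_j$, each $s_\ell$ decomposes uniquely as a concatenation $s_\ell = r_{\pi_\ell(0)} r_{\pi_\ell(1)} \cdots r_{\pi_\ell(m_\ell - 1)}$ of return words to $w_i$. Define the morphism $\psi(\ell) = \pi_\ell(0) \pi_\ell(1) \cdots \pi_\ell(m_\ell - 1)$ on the common alphabet of size $k$ and the coding $\sigma(\ell) = r_\ell$. Comparing the two factorizations of $\uu$ gives $\psi(\dd) = \dd$ and $\uu = \sigma(\dd)$.

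It remains to arrange that $\psi$ can be chosen primitive. Because $\text{Der}(\uu)$ is finite, one common value $\dd^{*}$ of $\dd_\uu$ is attained at an infinite set of right-special prefixes $w_{i_1}, w_{i_2}, \ldots$. Choosing $i = i_1$ and $j = i_n$ with $n$ large enough makes the return words $s_\ell$ to $w_{i_n}$ so long that, by uniform recurrence of $\uu$, each $r_t$ appears as a chunk in every $s_\ell$'s decomposition; equivalently, every column of $M_\psi$ is strictly positive. Once primitivity is in hand, $\dd$ is a fixed point of a primitive morphism and $\uu = \sigma(\dd)$ is primitive substitutive by definition.

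The main obstacle is precisely this final primitivity step. The pigeonhole selection only controls the derivated sequence, not the inner combinatorics of the chunks $\pi_\ell$, so one must track three quantities simultaneously: the bounded-gap constant furnished by uniform recurrence for the factors $r_0, \ldots, r_{k-1}$, the growth of $|s_\ell|$ as $j$ increases, and the compatibility between occurrences of $w_i$ inside $s_\ell$ and the chunk boundaries of $s_\ell$'s decomposition. Reconciling these to guarantee that some explicit choice of $(i, j)$ — or a fixed power of the resulting $\psi$ — yields a strictly positive matrix is where the delicate combinatorial bookkeeping lies, and this is the step I expect to consume most of the work.
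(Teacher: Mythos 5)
The paper does not prove this statement at all: it is imported verbatim as Durand's theorem from \cite{Dur98} and used as a black box, so there is no internal proof to compare yours against. Your converse direction (finitely many derivated sequences $\Rightarrow$ primitive substitutive) is essentially Durand's own argument, and it is exactly the construction the paper replays later in Section 6 (``Durand's construction of fixing morphisms''): pigeonhole two right-special prefixes, one a proper prefix of the other, with equal derivated sequences; decompose the return words of the longer one into return words of the shorter one to obtain $\psi$ with $\psi(\dd)=\dd$ and $\uu=\sigma(\dd)$; and secure primitivity by taking the longer prefix so long that uniform recurrence forces every lower-level return word into every decomposition. That half is sound, and you correctly identify and resolve the only delicate point.

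The forward direction, however, contains genuine gaps. First, the recognizability statement you rely on is false as formulated: it is not true that for $N$ large every occurrence of $\varphi^N(w)$ is the image of an occurrence of $w$. For the Thue--Morse morphism $\mu$ and the prefix $w=0$, the identity $\mu^N(1)\mu^N(1)=\mu^{N-1}(1)\,\mu^N(0)\,\mu^{N-1}(0)$ shows that $\mu^N(0)$ occurs inside $\mu^N(11)$ at the non-cutting position $2^{N-1}$ for \emph{every} $N$; consequently $\mathcal{R}_\uu(\varphi^N(w))\neq\{\varphi^N(r):r\in\mathcal{R}_\uu(w)\}$ there. The correct repair requires $|w|$ large (not $N$ large), a suitable right-extension of $\varphi^N(w)$, and Moss\'e's bilateral recognizability theorem, which you are assuming silently. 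Second, the concluding covering step is a non sequitur: two nested prefixes have the same derivated sequence precisely when their occurrence sets coincide, i.e.\ when no right-special prefix lies strictly between them; knowing that a long right-special prefix $w$ \emph{occurs as a prefix of} some $\varphi^N(w_0)$ gives no such control (indeed $w$ itself is right special, so generically $\dd_\uu(w)\neq\dd_\uu(\varphi^N(w_0))$), hence finiteness of $\mathrm{Der}(\uu)$ does not follow. Third, the reduction from a general primitive substitutive $\uu=\sigma(\vv)$ to the fixed-point case via ``finitely many boundary adjustments'' is asserted rather than argued and is one of the genuinely technical parts of \cite{Dur98}. Durand's actual route avoids all three issues: he shows that primitive substitutive sequences are linearly recurrent and then derives, uniformly in the prefix, bounds on the number and relative lengths of return words, from which the finiteness of the set of derivated sequences follows without any recognizability input.
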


\subsection{Sturmian sequences} \label{S_SturmianSequences}

Sturmian sequences are aperiodic sequences with the lowest possible factor complexity.
In other words, a sequence $\uu$ is \textit{Sturmian} if it has its factor complexity $\mathcal{C}_\uu(n) = n+1$ for all $n \in \N$.
Clearly, all Sturmian sequences are defined over a binary alphabet.

There are many equivalent definitions of Sturmian sequences, see for example  \cite{Be, BeSe, BaPeSt}.
One of the most important characterizations of Sturmian sequences comes from the symbolic dynamics: any Sturmian sequence can be obtained by a coding of two interval exchange transformation.
Here we recall only the basic facts about this transformation, a detailed explanation can be found in  \cite{Lo83}.

For a given parameter $\alpha \in (0,1)$, consider the partition of the interval $I=[0,1)$ into $I_0=[0,\alpha)$ and $I_1=[\alpha, 1)$ or the partition of $I=(0,1]$ into $I_0=(0,\alpha]$ and   $I_1=(\alpha, 1]$.
Then the \textit{two interval exchange transformation} $T: I\to I$ is defined by
$$
T(y) =
\left\{
\begin{aligned}
&y + 1 - \alpha \ \ \ \text{if} \ y \in I_0\,,\\
&y -  \alpha \ \ \ \ \  \ \ \ \text{if} \ y \in I_1\,.
\end{aligned}
\right.
$$
If we take an initial point $ \rho \in I$, the sequence $\uu = u_0u_1u_2 \cdots \in \{0,1\}^\N$ defined by
$$
u_n =
\left\{
\begin{aligned}
& 0 \ \ \ \ \ \ \ \text{if} \ T^n(\rho) \in  I_0\,,\\
&1 \ \ \ \ \ \ \  \text{if} \ T^n(\rho)  \in \ I_1\,
\end{aligned}
\right.
$$
is a \textit{2iet sequence} with the \textit{slope} $\alpha$ and the \textit{intercept} $\rho$.
It is well known that the set of all 2iet sequences with irrational slopes coincides with the set of all Sturmian sequences.

Any Sturmian sequence is uniformly recurrent.
The language of a Sturmian sequence is independent of its intercept $\rho$, i.e. it depends only on its slope $\alpha$.
The frequencies of the letters $0$ and $1$ in a Sturmian sequence with the slope $\alpha$ are $\alpha$ and $1-\alpha$,  respectively.
In the case that $\alpha > \tfrac12$, the form of the transformation $T$ implies that two consecutive occurrences of the letter $1$ are separated by the block $0^k$ or $ 0^{k+1}$, where $k=\lfloor \tfrac{\alpha}{1-\alpha}\rfloor$.
Similarly, if $\alpha < \tfrac12$, two $0$'s are separated by the block $1^k$ or $ 1^{k+1}$, where $k=\lfloor \tfrac{1-\alpha}{\alpha}\rfloor$.

Among all Sturmian sequences with a given slope $\alpha$, the sequence with the intercept $\rho =1-\alpha$ plays a special role.
Such a sequence is called a \textit{standard Sturmian sequence} and it is usually denoted by  ${\bf c}_\alpha$.
Any prefix of ${\bf c}_\alpha$ is a left special factor.
In other words, a Sturmian sequence $\uu \in \{0,1\}^\N$ is standard if both sequences $0\uu$, $1\uu$ are Sturmian.
In particular, it means that
\begin{itemize}
\item if $\alpha >\tfrac12$, then ${\bf c}_\alpha$ has a prefix $0^k1$ and ${\bf c}_\alpha$ can be uniquely written as a concatenation of the blocks $0^k1$ and $0^{k+1}1$;
\item if $\alpha <\tfrac12$, then ${\bf c}_\alpha$ has a prefix $1^k0$ and ${\bf c}_\alpha$ can be uniquely written as a concatenation of the blocks $1^k0$ and $1^{k+1}0$.
\end{itemize}
Moreover, a factor of ${\bf c}_\alpha$ is bispecial if and only if it is a palindromic prefix of ${\bf c}_\alpha$.

In the context of derivated sequences the most important characterization of Sturmian sequences is provided by the third author in \cite{Vui01}: a given sequence $\uu$ is Sturmian if and only if any prefix of $\uu$ has exactly two return words.

Let us denote the return words to a prefix $w$ of a Sturmian sequence $\uu$  as $\mathcal{R}_\uu(w) = \{r, s\}$.
Then the derivated sequence $\dd_\uu(w)$ of $\uu$ to the prefix $w$ can be considered over the alphabet $\{r, s\}$, i.e. $\dd_\uu(w) \in \{r, s\}^\N$.
As follows from Vuillon's \cite{Vui01} and Durand's result \cite{Dur98}, the derivated sequence $\dd_\uu(w)$ of a Sturmian sequence $\uu$ is also a Sturmian sequence.
Moreover, if $\uu$ is standard, then both sequences $0\uu$ and $1\uu$ are Sturmian.
It implies that $r\dd_\uu(w)$ and $s\dd_\uu(w)$ are Sturmian as well.
We can conclude that the derivated sequence to any prefix of a standard Sturmian sequence is a standard Sturmian sequence.
It also means that $\dd_\uu(w) \in \{r, s\}^\N$ can be decomposed into blocks $r^ks$ and $r^{k+1}s$, where $k$ is a positive integer and $r$ is the most frequent return word.
We will strictly use this notation through the whole paper.

In \cite{ArBr}, Ara\'ujo and Bruy\`ere described derivated sequences of any standard Sturmian sequence $\uu$.
Their description uses the continued fraction of the slope $\alpha$ of $\uu$.
Derivated sequences of all Sturmian sequences are studied in \cite{KlMePeSt18}.
In the sequel,  we will work only with standard Sturmian sequences since especially in this case the elements of the set $\text{Der}(\uu)$ are easily expressible.
In accordance with a wording  provided in \cite{KlMePeSt18}, the S-adic representation of $\uu$ by a sequence of Sturmian morphisms will be used for expression of  the set $\text{Der}(\uu)$.

\subsection{ Sturmian morphisms}

A morphism $\psi:\{0,1\}^* \to \{0,1\}^* $ is \textit{Sturmian} if $\psi(\uu)$ is a Sturmian sequence for any Sturmian sequence $\uu$.
The set of all Sturmian morphisms together with the operation of composition form the so-called Sturmian monoid $St$.
This monoid is generated by two morphisms $E$ and $F$, where $E$ is the morphism which exchanges letters, i.e. $E: 0 \to 1$, $1 \to 0$, and $F$ is the Fibonacci morphism, i.e. $F: 0 \to 01$, $1 \to 0$.
In the sequel, we work with the submonoid of $St$ which is   generated by two elementary morphisms $\varphi_b$ and $\varphi_\beta$ defined by
$$
\varphi_b = F\circ E: \quad
\left\{ \begin{aligned}
0 &\to 0 \\
1 &\to 01\,
\end{aligned} \right.
 \ \ \ \ \ \ \ \text{and}  \ \ \ \ \ \ \ \ \ \
\varphi_\beta = E\circ F: \quad
\left\{ \begin{aligned}
0 &\to 10 \\
1 &\to 1\,
\end{aligned}\right. .
$$
Their corresponding matrices are:
$$
M_b = \left(\begin{array}{cc}
1 & 1\\
0 & 1\\
\end{array} \right)\quad \quad \ \text{and} \quad \quad \quad M_\beta = \left(\begin{array}{cc}
1 & 0\\
1 & 1\\
\end{array} \right)\,.
$$

The image of a standard Sturmian sequence under $\varphi_b$ or $\varphi_\beta$ is a standard Sturmian sequence as well.
Therefore, any element of the submonoid $\left< \varphi_b, \varphi_\beta\right>$ preserves the set of standard Sturmian sequences.
For some $z = z_0z_1 \cdots z_{n-1} \in \{b, \beta\}^+$, the composition of the morphisms $\varphi_{z_0}, \varphi_{z_1}, \varphi_{z_2}, \ldots , \varphi_{z_{n-1}}$ will be denoted by $\varphi_z =  \varphi_{z_0}\varphi_{z_1} \cdots \varphi_{z_{n-1}} $.
Let us stress that the morphism $\varphi_z$ is primitive if and only if $z$ contains both letters $b$ and $\beta$.
By $\varphi_\varepsilon$ we denote the identity morphism.

\begin{lmm} \label{Lem_Standard}
For every standard Sturmian sequence $\uu$ there is a uniquely given standard Sturmian sequence $\uu'$ such that $\uu = \varphi_b(\uu')$ or $\uu = \varphi_\beta(\uu')$.
\end{lmm}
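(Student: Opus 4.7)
The plan is to treat existence and uniqueness separately. The key ingredients are (i) the fact recalled immediately before the lemma that both $\varphi_b$ and $\varphi_\beta$ send standard Sturmian sequences to standard Sturmian sequences, and (ii) the fact that a standard Sturmian sequence is uniquely determined by its slope, since the intercept is then forced to be $1-\alpha$.

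\emph{Uniqueness.} Since $\varphi_b(0)=0$ and $\varphi_b(1)=01$, every sequence in the image of $\varphi_b$ starts with $0$; similarly, since $\varphi_\beta(0)=10$ and $\varphi_\beta(1)=1$, every sequence in the image of $\varphi_\beta$ starts with $1$. Hence the first letter of $\uu$ forces which of the two morphisms applies. Once the morphism is fixed, the preimage is unique by a greedy left-to-right parse: in the $\varphi_b$-case, at each position with $u_i=0$ one sets $u'_j=0$ if $u_{i+1}=0$ and $u'_j=1$ if $u_{i+1}=1$, consuming the block $0$ or $01$. This parse is well-defined because the block-structure recalled in Section \ref{S_SturmianSequences} forbids the factor $11$ in $\mathbf{c}_\alpha$ whenever $\alpha>\tfrac12$, so after consuming each block one lands again on a letter $0$, which starts the next block. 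The $\varphi_\beta$-case is symmetric, with $00$ being the forbidden factor.

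\emph{Existence.} Assume $\uu=\mathbf{c}_\alpha$ starts with $0$, so $\alpha>\tfrac12$; the opposite case is symmetric via $\varphi_\beta$. A short frequency computation using $|\varphi_b(0)|=1$ and $|\varphi_b(1)|=2$ shows that if a recurrent sequence $\vv$ has letter-$0$ frequency $\alpha'\in(0,1)$, then $\varphi_b(\vv)$ has letter-$0$ frequency $1/(2-\alpha')$. Setting $1/(2-\alpha')=\alpha$ gives $\alpha'=(2\alpha-1)/\alpha\in(0,1)$. Define $\uu':=\mathbf{c}_{\alpha'}$. Since $\varphi_b$ preserves standard Sturmian sequences, $\varphi_b(\uu')$ is a standard Sturmian sequence whose slope equals $\alpha$, and is therefore equal to $\mathbf{c}_\alpha=\uu$ by uniqueness of the standard Sturmian sequence of a given slope.

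The only point demanding a small verification is that the greedy parse in the uniqueness argument never stalls and actually produces an infinite sequence; this reduces to the forbidden-factor observation for $\mathbf{c}_\alpha$, so no genuine obstacle arises. Everything else is a formal consequence of the preservation property of $\varphi_b,\varphi_\beta$ on standard Sturmian sequences and of the slope-determines-sequence principle.
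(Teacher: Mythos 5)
Your proof is correct, but the existence half is organized differently from the paper's. The paper's proof works in one pass: it writes $\uu$ as a concatenation of blocks $0^k1$ and $0^{k+1}1$, desubstitutes by $0\to 0$, $01\to 1$, and asserts that the resulting sequence (a concatenation of blocks $0^{k-1}1$ and $0^k1$) is again standard Sturmian — so existence and uniqueness come out of the same block decomposition, with the standardness of $\uu'$ left essentially as an assertion. Your uniqueness argument (first letter selects the morphism, then the greedy parse into the suffix code $\{0,01\}$, justified by the absence of the factor $11$ when $\alpha>\tfrac12$) is the same desubstitution idea made explicit. Your existence argument, however, runs in the opposite direction: instead of desubstituting $\uu$ and checking that the result is standard, you compute which slope $\alpha'=(2\alpha-1)/\alpha$ a preimage would have to have, define $\uu':=\mathbf{c}_{\alpha'}$ outright, and invoke the preservation property of $\varphi_b$ together with the fact that a standard Sturmian sequence is determined by its slope to conclude $\varphi_b(\uu')=\mathbf{c}_\alpha=\uu$. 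This buys you a clean justification of exactly the point the paper glosses over (that the desubstituted sequence is standard), at the price of a frequency computation and of leaning on two external facts from the preliminaries; the paper's version is shorter and purely combinatorial but leaves the standardness of $\uu'$ to the reader.
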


\begin{proof}
Let us suppose that the letter $0$ is more frequent in $\uu$ (the second case can be proved analogously).
Since $\uu$ is a standard Sturmian sequence, it can be written as a concatenation of blocks $0^k1$ and $0^{k+1}1$ for some integer $k \geq 1$.
Thus $\uu$ can be uniquely desubstituted by $0 \to 0$ and $01 \to 1$ to the standard Sturmian sequence $\uu'$ which is a concatenation of blocks $0^{k-1}1$ and $0^{k}1$.
Therefore $\uu = \varphi_b(\uu')$.
\end{proof}

By the previous lemma, to a given standard Sturmian sequence $\uu$ we can uniquely assign the pair: the \textit{directive sequence} $\zz = z_0z_1 \cdots \in \{b, \beta\}^\N$ and the sequence $(\uu^{(n)})_{n \geq 0}$, such that
$$
 \uu^{(n)} \in \{0,1\}^\N \ \ \text{ is a standard Sturmian sequence \ and } \
\uu = \varphi_{z_0z_1 \ldots z_{n-1}}(\uu^{(n)})\,   \ \text{for every } \ n \in \N\,.
$$
In fact, a sequence $\zz \in \{b, \beta\}^\N$ containing infinitely many occurrences of both letters already determines a unique standard Sturmian sequence $\uu$, as
$$\uu = \lim_{n\to \infty}     \varphi_{z_0z_1 \ldots z_{n-1}}(0) = \lim_{n\to \infty}     \varphi_{z_0z_1 \ldots z_{n-1}}(1)\,.$$

Now we can formulate several simple consequences of Lemma \ref{Lem_Standard}. 

\begin{bsrvtn} \label{Obs_Epsilon}
Let $\uu$ be a standard Sturmian sequence with the directive sequence $\zz \in \{b, \beta\}^\N$.
\begin{itemize}
\item[i)] The sequence $\zz$ contains infinitely many letters $b$ and infinitely many letters $\beta $.
\item[ii)] If $\zz$ has a prefix  $b^k \beta$ for some positive integer $k$, then the letter $0$ is more frequent in $\uu$ and $\uu$ can be written as a concatenation of blocks $0^k1$ and $0^{k+1}1$.
\item[iii)] If $\zz$ has a prefix $\beta^k b$ for some positive integer $k$, then the letter $1$ is more frequent in $\uu$ and $\uu$ can be written as a concatenation of blocks $1^k 0$ and $1^{k+1}0$.
\item[iv)] The directive sequence $\zz$ is eventually periodic if and only if the sequence $\uu$ is substitutive.
Moreover, $\zz$ is purely periodic, i.e. $\zz = z^\infty$ for some $z \in \{b, \beta\}^+$, if and only if $\uu$ is a fixed point of the morphism $\varphi_z$.
\end{itemize}
\end{bsrvtn}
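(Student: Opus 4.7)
For part (i), I would argue by contradiction. Suppose $\zz$ contains only finitely many letters $\beta$, so that $z_n = b$ for all $n \geq m$. Then $\uu^{(m)} = \varphi_b^n(\uu^{(m+n)})$ for every $n \geq 0$. Since $\varphi_b^n$ sends $0 \mapsto 0$ and $1 \mapsto 0^n 1$, the sequence $\uu^{(m)}$ must be a concatenation of blocks from $\{0,\, 0^n 1\}$, so any two consecutive occurrences of $1$ in $\uu^{(m)}$ are separated by at least $n$ zeros. As $n$ is arbitrary and $\uu^{(m)}$ is uniformly recurrent (being Sturmian), this forces $1$ to appear at most once in $\uu^{(m)}$, contradicting aperiodicity. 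The case of finitely many letters $b$ is symmetric, using $\varphi_\beta^n(0) = 1^n 0$ and $\varphi_\beta^n(1) = 1$.

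For part (ii), I would chain the desubstitutions. From $\uu^{(k)} = \varphi_\beta(\uu^{(k+1)})$ together with $\varphi_\beta(0) = 10$ and $\varphi_\beta(1) = 1$, the sequence $\uu^{(k)}$ decomposes uniquely as a concatenation of blocks $1^{m_i} 0$ with $m_i \geq 1$, and starts with $1$. Then $\uu = \varphi_b^k(\uu^{(k)})$, and since $\varphi_b^k$ sends $0 \mapsto 0$ and $1 \mapsto 0^k 1$, each block $1^{m_i} 0$ is mapped to $(0^k 1)^{m_i} \cdot 0$. A direct inspection within one such image and across successive ones shows that consecutive occurrences of $1$ in $\uu$ are separated by exactly $k$ zeros (within a block) or $k+1$ zeros (across a block boundary), and $\uu$ starts with $0^k 1$. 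This is the claimed decomposition, and clearly $0$ is the more frequent letter. Part (iii) follows at once from the letter-exchange symmetry swapping $0 \leftrightarrow 1$ and $b \leftrightarrow \beta$.

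For the easy direction of (iv), if $\zz = y z^\infty$ with $z \in \{b,\beta\}^+$, then $\uu^{(|y|)}$ has directive sequence $z^\infty$, and the limit formula stated just before the observation gives $\uu^{(|y|)} = \varphi_z(\uu^{(|y|)})$; by (i) the period $z$ contains both letters, so $\varphi_z$ is primitive, and $\uu = \varphi_y(\uu^{(|y|)})$ is primitive substitutive (equal to the fixed point of $\varphi_z$ when $y = \varepsilon$). The converse for fixed points is immediate from the uniqueness in Lemma \ref{Lem_Standard}: if $\uu = \varphi_z(\uu)$, then applying Lemma \ref{Lem_Standard} iteratively forces the first $|z|$ letters of the directive sequence of $\uu$ to equal $z$ and $\uu^{(|z|)} = \uu$, whence $\zz = z^\infty$. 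The genuine obstacle is the converse in the general substitutive case: my plan is to invoke Durand's Theorem \ref{Durand} together with the description of derivated sequences of standard Sturmian sequences from \cite{ArBr, KlMePeSt18}, in which each derivated sequence is itself a standard Sturmian sequence whose directive sequence is essentially a suffix of $\zz$; finiteness of $\text{Der}(\uu)$ then forces only finitely many distinct suffixes of $\zz$, hence eventual periodicity. A classical alternative identifies $\zz$ with the continued fraction expansion of the slope of $\uu$ and uses the Lagrange-style equivalence between quadratic irrationality, eventual periodicity of the continued fraction, and substitutivity.
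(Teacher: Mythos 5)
Your proof is correct, and it supplies exactly the details the paper omits: the statement is given as an unproved Observation introduced by the words ``several simple consequences of Lemma \ref{Lem_Standard}'', and your iterated-desubstitution arguments for (i)--(iii) together with the combination of Theorem \ref{Durand} and the suffix description of derivated sequences of standard Sturmian sequences (Corollary \ref{composition}, taken from \cite{KlMePeSt18}) for the hard direction of (iv) are precisely the intended route. The one step worth tightening is that $\text{Der}(\uu)$ identifies derivated sequences only up to a permutation of letters, so finiteness a priori gives finitely many suffixes of $\zz$ only up to the exchange $b\leftrightarrow\beta$; this still forces eventual periodicity (if $\zz^{(n)}$ equals $\zz^{(m)}$ after the exchange for some $n<m$, then applying the relation twice gives $z_{j+2(m-n)}=z_j$ for all $j\geq n$), but the line should be said.
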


\subsection{Complementary symmetric Rote sequences} \label{S_Rote}

A \textit{Rote sequence} is a sequence $\vv$ with the factor complexity $\mathcal{C}_\vv(n) = 2n$ for all integer $n \geq 1$.
Clearly, all Rote sequences are defined over a binary alphabet, e.g. $\{0,1\}$.
If the language of  a Rote sequence $\vv$ is  closed under the exchange of letters, i.e. $E(v)\in \mathcal{L}(\vv)$ for each $v \in \mathcal{L}(\vv)$, the Rote sequence  $\vv$  is called \textit{complementary symmetric}.
Rote in \cite{Ro94} proved that these sequences are essentially connected with Sturmian sequences:

\begin{prpstn}[Rote \cite{Ro94}] \label{Prop_SturmRote}
Let $\uu = u_0u_1 \cdots $ and $\vv = v_0v_1 \cdots$ be two sequences over $\{0,1\}$ such that $u_i = v_i + v_{i+1}\mod 2$ for all $i \in \N$. Then $\vv$ is a complementary symmetric Rote sequence if and only if $\uu$ is a Sturmian sequence.
\end{prpstn}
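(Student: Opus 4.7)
The overall plan rests on the $2$-to-$1$ map $\pi:\{0,1\}^n \to \{0,1\}^{n-1}$ defined by $\pi(x_0x_1\cdots x_{n-1}) = (x_0+x_1,\, x_1+x_2,\, \ldots,\, x_{n-2}+x_{n-1}) \bmod 2$. It is surjective, its two preimages of any $y$ are swapped by the letter exchange $E$ (flipping all coordinates leaves the consecutive sums invariant mod~$2$), and the relation $u_i \equiv v_i + v_{i+1} \bmod 2$ translates to $\pi\bigl(\mathcal{L}(\vv)\cap\{0,1\}^n\bigr) = \mathcal{L}(\uu)\cap\{0,1\}^{n-1}$ for every $n \geq 1$.

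For the direction ``$\vv$ complementary symmetric Rote $\Rightarrow$ $\uu$ Sturmian'', closure of $\mathcal{L}(\vv)$ under $E$ ensures that both $\pi$-preimages of every $y \in \mathcal{L}(\uu)\cap\{0,1\}^{n-1}$ lie in $\mathcal{L}(\vv)$, hence $\mathcal{C}_\vv(n)=2\,\mathcal{C}_\uu(n-1)$. Combined with $\mathcal{C}_\vv(n)=2n$, this gives $\mathcal{C}_\uu(m)=m+1$ for all $m\geq 0$, so $\uu$ is Sturmian.

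For the converse, suppose $\uu$ is Sturmian and fix a factor $w=v_jv_{j+1}\cdots v_{j+n-1}$ of $\vv$; it suffices to prove $E(w) \in \mathcal{L}(\vv)$, since closure under $E$ together with the identity above then yields $\mathcal{C}_\vv(n)=2n$. Let $y=\pi(w) \in \mathcal{L}(\uu)$, let $j=j_1<j_2<\cdots$ be its occurrences in $\uu$ (infinite by uniform recurrence of Sturmian sequences), and write $v_i = v_0 + \Sigma_i \bmod 2$ with $\Sigma_i = u_0+u_1+\cdots+u_{i-1} \bmod 2$. The factor of $\vv$ of length $n$ starting at $j_k$ is then the preimage of $y$ whose first letter is $v_{j_k}$, and it equals $w$ or $E(w)$ according to whether $\Sigma_{j_k}=\Sigma_j$ or not. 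So I need to show that $\Sigma_{j_k}$ is not constant in $k$, equivalently that the return word $r_k$ from $j_k$ to $j_{k+1}$ satisfies $|r_k|_1 \not\equiv 0 \bmod 2$ for at least one $k$.

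This is the heart of the argument and comes down to a parity computation. By Vuillon's characterization, $\mathcal{R}_\uu(y)=\{r,s\}$ has exactly two elements, and both $r$ and $s$ must occur in the return-word decomposition of $\uu$ by aperiodicity. If $|r|_1$ and $|s|_1$ were both even, the matrix $M=\bigl(\begin{smallmatrix}|r|_0 & |s|_0 \\ |r|_1 & |s|_1\end{smallmatrix}\bigr)$ would have even determinant; however, $M$ is unimodular, because the derivated sequence of $\uu$ to $y$ is again Sturmian (by Vuillon and Durand), and hence the morphism $0\mapsto r,\ 1\mapsto s$ is a Sturmian morphism, i.e., a composition of the generators $E,\varphi_b,\varphi_\beta$ of the Sturmian monoid, all of which have determinant $\pm 1$. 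This contradiction forces at least one of $|r|_1,|s|_1$ to be odd, so the parity of $\Sigma_{j_k}$ must flip for some $k$, and $E(w)\in\mathcal{L}(\vv)$. The principal obstacle is exactly this unimodularity of the return-word matrix: while it is standard folklore for Sturmian sequences, justifying it rigorously in this context requires either the characterization of Sturmian morphisms or an explicit induction through the bispecial factors of $\uu$.
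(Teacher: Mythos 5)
The paper itself offers no proof of this proposition --- it is imported verbatim from Rote \cite{Ro94} --- so there is no internal argument to compare against; judged on its own, your proof is correct. The forward direction is a clean fibre count: closure of $\mathcal{L}(\vv)$ under $E$ makes your map $\pi$ (the paper's $\S$) exactly $2$-to-$1$ from length-$n$ factors of $\vv$ onto length-$(n-1)$ factors of $\uu$, giving $\mathcal{C}_\uu(m)=m+1$. In the converse, reducing ``$E(w)\in\mathcal{L}(\vv)$'' to finding one return word to $\S(w)$ with an odd number of $1$'s is exactly right, and the parity fact you isolate --- the two return words to a factor of a Sturmian sequence cannot both contain an even number of $1$'s --- is precisely the ``type $SS$ cannot appear'' statement in the remark following Definition \ref{rsk}. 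There the paper derives it from the WELLDOC(2) property of \cite{BaBuLuHlPu16}, which in fact yields your conclusion without any detour through return words: among the prefixes $u$ for which $u\,\S(w)$ is a prefix of $\uu$, both parities of $|u|_1$ occur, so both $\S$-preimages of $\S(w)$ occur in $\vv$. Your alternative route through unimodularity of the return-word matrix is also sound: the morphism $0\mapsto r$, $1\mapsto s$ maps the derivated sequence (Sturmian by \cite{Vui01, Dur98}) onto a tail of $\uu$, hence is a Sturmian morphism by the Berstel--S\'e\'ebold characterization, and the generators of $St$ have determinant $\pm 1$ --- the same determinant argument the paper uses to restrict the matrices in \eqref{Set_matrices}. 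So the step you flag as the principal obstacle is not a logical gap, but it does lean on a nontrivial known theorem that must be cited (or replaced by WELLDOC, or by an induction on bispecial factors in the style of Lemmas \ref{Lem_ImageB} and \ref{Lem_ImageBeta}). One point to make explicit: you need Vuillon's theorem for arbitrary factors, not only prefixes, since $\S(w)$ need not be a prefix of $\uu$; that is the form actually proved in \cite{Vui01}.
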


\medskip

\noindent {\bf Convention. }In this paper, we work only with complementary symmetric Rote sequences and for simplicity we usually call them shortly Rote sequences.
\medskip

As indicated by Proposition \ref{Prop_SturmRote}, it will be useful to introduce the following notation.

\begin{dfntn}\label{S}  By $\S$ we denote the mapping $\S: \{0,1\}^+ \to \{0,1\}^*$ such that for every $v_0\in \{0,1\}$ we put $\S(v_0) = \varepsilon$ and for every $v = v_0v_1 \cdots v_{n} \in \{0,1\}^+$ of length at least $2$ we put $\S(v_0v_1 \cdots v_{n}) = u_0u_1 \cdots u_{n-1}$, where
$$
u_{i} = v_i + v_{i+1} \mod 2 \  \ \text{ for all } \ i \in \{0,1, \ldots, n-1\}\,.
$$
\end{dfntn}

\begin{xmpl}
Let $v = 001110$.
Then $\S(v) = \S(E(v))  = 01001$. Clearly, the images of $v$ and $E(v)$ under $\S$ coincide for each $v \in \{0,1\}^+$.
Moreover, $\S(x) = \S(y)$ if and only if $x = y$ or $x = E(y)$.
\end{xmpl}

If we extend the domain of $\S$ naturally to $\{0,1\}^\N$, Proposition \ref{Prop_SturmRote} says: $\vv$ is a Rote sequence if and only if $\S(\vv)$ is a Sturmian sequence.
Moreover, for any Sturmian sequence $\uu$ there exist two Rote sequences $\vv$ and $E(\vv)$ such that  $\uu = \S(\vv)  = \S(E(\vv))$.
Since a permutation of letters in the sequence does not influence its derivated sequences, we will work only with Rote sequences starting with the letter $0$ without lose of generality.
We will also use the bar notation $\bar{\vv}=E(\vv)$  or $\bar{v} =E(v)$ to express the sequence or the word with exchanged letters $0 \leftrightarrow 1$.

\medskip

\noindent {\bf Convention. } We consider only  Rote sequences $\vv\in \{0,1\}^\mathbb{N}$ with the prefix $0$. 
If a Sturmian sequence $\uu \in \{0,1\}^\mathbb{N}$   satisfies $\uu = \S(\vv)$, we say that $\vv$  is \textit{associated} with $\uu $ or equivalently $\uu$ is \textit{associated} with $\vv$.

\medskip

To a given word $u \in \{0,1\}^*$ there are exactly two words $v, \bar{v}$ such that $S(v) = S(\bar{v}) = u$.
Moreover, if the first letter of $v$ is given, then the rest of the word $v = v_0 \cdots v_n$ is completely determined by $u = u_0\cdots u_{n-1}$: 
\begin{equation} \label{Eq_InputLetter}
v_{i+1} = v_0 + u_0 + u_1 + \cdots + u_{i} \mod 2 \quad \text{for all } i \in \{0, 1, \ldots, n-1\}\,.
\end{equation}

\begin{lmm} \label{Prop_Factors}
Let $\uu$ be a Sturmian sequence associated with a Rote sequence $\vv$.
A word $u$ is a factor of $\uu$ if and only if both words $v, \bar{v}$ such that $u = \S(v) = \S(\bar{v})$ are factors of $\vv$.
Moreover, for every $m \in \N$, the index $m$ is an occurrence of $u$ in $\uu$ if and only if $m$ is an occurrence of $v$  in $\vv$ or an occurrence of $\bar{v}$ in $\vv$.
\end{lmm}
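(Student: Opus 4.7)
The plan is to exploit the defining identity $u_i = v_i + v_{i+1} \bmod 2$ in both directions, combined with the complementary-symmetry of $\mathcal{L}(\vv)$.

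First I would establish the occurrence statement, which is the core computational fact. By Definition \ref{S}, for any $m, n \in \N$ with $n \geq 1$ we have
$$
\S(v_m v_{m+1} \cdots v_{m+n}) = (v_m+v_{m+1}) \cdots (v_{m+n-1}+v_{m+n}) = u_m u_{m+1} \cdots u_{m+n-1} \pmod 2,
$$
so $\S$ sends the length-$(n+1)$ window of $\vv$ starting at position $m$ to the length-$n$ window of $\uu$ starting at $m$. Hence if $m$ is an occurrence of $v$ (or of $\bar v$) in $\vv$, then $m$ is an occurrence of $\S(v)=\S(\bar v)=u$ in $\uu$. Conversely, suppose $m$ is an occurrence of $u$ in $\uu$ and let $w := v_m v_{m+1} \cdots v_{m+n}$. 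Then $\S(w)=u$, and since (as noted after Definition \ref{S} and formalized by \eqref{Eq_InputLetter}) the preimage of $u$ under $\S$ consists of exactly the two words $v$ and $\bar v = E(v)$, we conclude $w \in \{v, \bar v\}$, i.e.\ $m$ is an occurrence of $v$ or of $\bar v$ in $\vv$.

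The factor statement then follows quickly. If both $v$ and $\bar v$ lie in $\mathcal{L}(\vv)$, each has some occurrence in $\vv$, and the forward direction above produces an occurrence of $u$ in $\uu$, so $u \in \mathcal{L}(\uu)$. Conversely, if $u \in \mathcal{L}(\uu)$, take any occurrence $m$ of $u$; the reverse direction gives that $v$ or $\bar v$ occurs in $\vv$ at position $m$. To obtain both, I invoke the hypothesis that $\vv$ is complementary symmetric: $\mathcal{L}(\vv)$ is closed under the letter exchange $E$, so $v \in \mathcal{L}(\vv)$ forces $\bar v = E(v) \in \mathcal{L}(\vv)$ and vice versa.

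There is essentially no obstacle beyond bookkeeping: the proof is a direct unpacking of $\S$ together with \eqref{Eq_InputLetter} (which guarantees that $\S$ is exactly two-to-one with fibers $\{v, \bar v\}$) and a single appeal to the $E$-invariance of $\mathcal{L}(\vv)$. The only point worth being careful about is that the second half of the statement asserts \emph{both} $v$ and $\bar v$ are factors; without the complementary-symmetry hypothesis one would only get that one of them occurs, so that hypothesis must be used explicitly at this last step.
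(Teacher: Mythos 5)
Your proof is correct, and it is essentially the argument the paper intends: the paper states this lemma without proof, treating it as an immediate consequence of Definition \ref{S}, the two-element fibers of $\S$ guaranteed by \eqref{Eq_InputLetter}, and the closure of $\mathcal{L}(\vv)$ under the exchange of letters. Your write-up supplies exactly that unpacking, including the one genuinely necessary appeal to complementary symmetry to get \emph{both} $v$ and $\bar{v}$ in $\mathcal{L}(\vv)$.
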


Bispecial factors of a sequence $\uu$ play  a crucial role  in finding its derivated sequences.
We use the terminology introduced by Cassaigne \cite{Cas97} to distinguish three types of bispecial factors.
Let $w$ be a bispecial factor of $\uu$.
Then the \textit{bilateral order} of $w$ is the number
$$
B(w) = \#\{(a,b) \in \A \times \A : awb \in \mathcal{L}(\uu)\} - \#\{a \in \A : aw \in \mathcal{L}(\uu)\} - \#\{b \in \A : wb \in \mathcal{L}(\uu)\} + 1\, .
$$
The bispecial factor $w$ is \textit{weak} if $B(w) < 0$, it is \textit{ordinary} if $B(w) = 0$ and it is \textit{strong} if $B(w) > 0$.

\begin{crllr} \label{Coro_Bispecials}
Let $\uu$ be a Sturmian sequence associated with a Rote sequence $\vv$, let $\ell \in \N$.
If $w$ is a bispecial factor of length $\ell$ in $\uu$, then there are two bispecial factors $x$, $\bar{x}$ of length $\ell+1$ in $\vv$ such that $w = \S(x) = \S(\bar{x})$.
Conversely, if $x$ is a bispecial factor of length $\ell+1$ in $\vv$, then $\S(x)$ of length $\ell$ is a bispecial factor in $\uu$.
Moreover, each non-empty bispecial factor of $\vv$ is ordinary and the empty word is a strong bispecial factor of $\vv$.
\end{crllr}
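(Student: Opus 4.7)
The plan is to reduce the entire statement to two ingredients: a direct identification of bispecial factors in $\vv$ with bispecial factors in $\uu$ via the map $\S$, and an analysis of bilateral orders based on the second-difference-of-complexity identity of Cassaigne combined with the complementary symmetry of $\vv$.

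First, I will prove the equivalence that a word $x = x_0 x_1 \cdots x_\ell \in \{0,1\}^+$ is bispecial in $\vv$ if and only if $\S(x)$ is bispecial in $\uu$. The essential observation is that for every letter $c \in \{0,1\}$,
\[
\S(xc) \;=\; \S(x) \cdot \bigl((x_\ell + c) \bmod 2\bigr) \qquad \text{and} \qquad \S(cx) \;=\; \bigl((c + x_0) \bmod 2\bigr) \cdot \S(x).
\]
As $c$ varies over $\{0,1\}$, so does the trailing letter of $\S(xc)$, and likewise the leading letter of $\S(cx)$. Combined with Lemma~\ref{Prop_Factors} and the closure of $\mathcal{L}(\vv)$ under the exchange $E$, this yields a bijection between the right (resp.\ left) extensions of $x$ in $\vv$ and those of $\S(x)$ in $\uu$. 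Both parts of the correspondence then follow: for a bispecial $w$ of length $\ell$ in $\uu$, its two preimages $x, \bar{x}$ under $\S$ lie in $\mathcal{L}(\vv)$ by Lemma~\ref{Prop_Factors}, are distinct since $|x| \geq 1$, and are bispecial by the equivalence; the converse is the same equivalence read backwards.

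For the bilateral orders I will apply the standard Cassaigne identity
\[
\mathcal{C}_\vv(n+2) - 2\,\mathcal{C}_\vv(n+1) + \mathcal{C}_\vv(n) \;=\; \sum_{\substack{|v|=n \\ v \text{ bispecial}}} B(v).
\]
At $n = 0$ the left-hand side equals $4 - 4 + 1 = 1$, so since $\varepsilon$ is the unique bispecial factor of length $0$, one obtains $B(\varepsilon) = 1 > 0$, i.e.\ $\varepsilon$ is strong. For $n \geq 1$ the left-hand side vanishes (because $\mathcal{C}_\vv$ is affine on positive lengths), while the bispecials of length $n$ in $\vv$ form the pair $\{x, \bar{x}\}$ by the first part. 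The closure of $\mathcal{L}(\vv)$ under $E$ forces the three quantities in the definition of $B$ to be invariant under $x \mapsto \bar{x}$, so $B(x) = B(\bar{x})$; combined with $B(x) + B(\bar{x}) = 0$ this gives $B(x) = 0$, and each non-empty bispecial of $\vv$ is ordinary.

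The main obstacle, if any, is the last step: the mere vanishing of the sum of bilateral orders over bispecials of a fixed positive length would in principle allow a strong–weak pairing; it is essential to use the complementary symmetry of $\vv$ to pair the two bispecials of each length with equal bilateral order, forcing both to be zero.
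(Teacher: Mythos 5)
Your proof is correct, and while the first half follows the paper's route, the second half is a genuinely different argument. The bispecial correspondence is established exactly as in the paper: both arguments rest on Lemma~\ref{Prop_Factors} together with the observation that the left and right extensions of $x$ in $\vv$ biject with those of $\S(x)$ in $\uu$. Where you diverge is the classification of bilateral orders. The paper argues concretely: a bispecial factor $w$ of a Sturmian sequence is ordinary by the balance property, so its extension set is precisely $\{0w1,1w0\}$ together with exactly one of $\{0w0,1w1\}$; pulling these six words back through $\S$ yields an explicit list of the extensions of $x$ and $\bar{x}$ in $\vv$, from which $B(x)=B(\bar{x})=0$ is read off directly. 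You instead invoke Cassaigne's second-difference identity $\mathcal{C}_\vv(n+2)-2\mathcal{C}_\vv(n+1)+\mathcal{C}_\vv(n)=\sum B(v)$, note that the left-hand side vanishes for $n\geq 1$ because $\mathcal{C}_\vv$ is affine on positive lengths, and then use the $E$-invariance of $\mathcal{L}(\vv)$ to split the vanishing sum as $2B(x)=0$. Your closing remark identifies precisely the point where a naive counting argument would fail: without the symmetry, the zero sum would still permit a strong--weak pair. Your route buys independence from the balance property of Sturmian sequences (only the complexity function and the symmetry are used), at the cost of being less explicit --- it does not tell you which extensions of $x$ actually occur, information the paper's enumeration provides and which is implicitly convenient later. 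Two minor points you should make explicit: the Cassaigne identity in the form you use requires every factor of $\vv$ to be left-extendable, which holds since $\vv$ is uniformly recurrent; and the claim that the bispecial factors of $\vv$ of a fixed positive length are exactly the pair $\{x,\bar{x}\}$ uses that a Sturmian sequence has at most one bispecial factor of each length. Both computations of $B(\varepsilon)=1$ amount to the same count of the four factors of length two.
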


\begin{proof}
Let $w$ be a bispecial factor of $\uu$.
By the well known balance properties of Sturmian sequences, the bispecial factor $w$ is ordinary. 
Indeed, the words $0w1, 1w0$ are always factors of $\uu$, in addition, just one word from $\{1w1, 0w0\}$ is a factor of $\uu$. 
Without lose of generality let us suppose that $1w1 \in \mathcal{L}(\uu)$.
The associated factors of the Rote sequence $\vv$ are $0x\bar{a}$, $1\bar{x}a$, $0\bar{x}\bar{a}$, $1xa$, $0\bar{x}a$ and $1x\bar{a}$, where $w = \S(x)$ and $x$ starts with $0$ and ends with $a$.
Combining with Lemma \ref{Prop_Factors} we get that both words $x, \bar{x}$ are ordinary bispecial factors of $\vv$.

Conversely, let us suppose that $x$ is a non-empty bispecial factor of $\vv$.
It means that the words $0x$, $1x$, $x0$, $x1$ are factors of $\vv$.
Then $\S(0x) = aw$, $\S(1x) = \bar{a}w$, $\S(x0) = wb$, $\S(x1) = w\bar{b}$, where $w = \S(x)$, $a$ is the first letter of $x$ and $b$ is the last letter of $x$.
Thus $w$ is a bispecial factor of $\uu$.

Since $00,11,01,10 \in \mathcal{L}(\vv)$, the bilateral order of $\varepsilon$ is 1, i.e. $\varepsilon$  is strong.
\end{proof}

\section{Return words to prefixes of complementary symmetric  Rote sequences} \label{S_ReturnWords}

Complementary symmetric Rote sequences form a special subclass of binary sequences coding the rotations.
The return words in the sequences coding the rotations were studied  in \cite{BlBrLaVui09} in particular for palindromic factors.
To compute the exact number of return words to a factor of a given Rote sequence, we use the following results from \cite{BaPeSt08} (Lemmas 4.2 and 4.4):

\begin{itemize}
\item[i)] If $\vv$ is uniformly recurrent sequence with no weak bispecial factor, then
$\#\mathcal{R}_\vv(x) \geq 1 + \Delta\mathcal{C}_\vv(|x|)$
for every factor $x \in \mathcal{L}(\vv)$.
\item[ii)] If $\vv$ has no weak bispecial factor and $\Delta \mathcal{C}_\vv(n) < m$ for all $n \geq 0$, then
$\#\mathcal{R}_\vv(w) \leq m$
for every factor  $w \in \mathcal{L}(\vv)$.
\end{itemize}
Recall that  $\Delta \mathcal{C}_\vv$ denotes the first difference of the factor complexity $\mathcal{C}_\vv$, i.e. $\Delta \mathcal{C}_\vv(n) = \mathcal{C}_\vv(n+1) - \mathcal{C}_\vv(n)$ for each $n \in \mathbb{N}$.

\begin{thrm} \label{Thm_ReturnWordsNumber}
Let $\vv$ be a Rote sequence. Then every non-empty prefix $x$ of $\vv$ has exactly three distinct return words.
\end{thrm}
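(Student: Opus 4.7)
The strategy is to apply directly the two quantitative bounds from \cite{BaPeSt08} quoted immediately above the theorem. For the lower bound I need that $\vv$ is uniformly recurrent and has no weak bispecial factors; for the upper bound I need $\Delta\mathcal{C}_\vv(n)<3$ for every $n\in\N$. Once these three ingredients are secured, any non-empty prefix $x$ of $\vv$ of length $n\geq 1$ satisfies, by the two cited items,
$$
1+\Delta\mathcal{C}_\vv(n) \;=\; 1+2 \;=\; 3 \;\leq\; \#\mathcal{R}_\vv(x) \;\leq\; 3,
$$
forcing $\#\mathcal{R}_\vv(x)=3$, which is exactly the claim.

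The absence of weak bispecial factors is immediate from Corollary \ref{Coro_Bispecials}: every non-empty bispecial factor of $\vv$ is ordinary, i.e.\ has $B(w)=0$, and the empty word is strong with $B(\varepsilon)=1$; hence $B(w)\geq 0$ for every bispecial $w\in\mathcal{L}(\vv)$. The complexity condition is a one-line arithmetic check: $\mathcal{C}_\vv(0)=1$ and $\mathcal{C}_\vv(n)=2n$ for $n\geq 1$ give $\Delta\mathcal{C}_\vv(0)=1$ and $\Delta\mathcal{C}_\vv(n)=2$ for $n\geq 1$, both strictly below $3$. In particular $\Delta\mathcal{C}_\vv(n)=2$ exactly whenever $n=|x|\geq 1$, which is what fuels the lower bound above.

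The only non-routine ingredient, and what I see as the main obstacle, is the uniform recurrence of $\vv$ itself. I would deduce it from the uniform recurrence of the associated Sturmian sequence $\uu=\S(\vv)$ as follows. By Lemma \ref{Prop_Factors}, the occurrences in $\vv$ of a factor $v$ together with those of $\bar v$ coincide exactly with the occurrences of $\S(v)$ in $\uu$, and the latter have uniformly bounded gaps. To upgrade bounded gaps for the pair $\{v,\bar v\}$ to bounded gaps for $v$ alone, I would invoke Rote's original description of complementary symmetric Rote sequences as codings of an irrational rotation on the unit circle with the two-arc partition of equal length $\tfrac12$, recalled in the introduction (see \cite{Ro94}). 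The underlying rotation is minimal, so the symbolic system it codes is minimal as well, and therefore $\vv$ is uniformly recurrent. With this in hand both hypotheses of the lemmas from \cite{BaPeSt08} are verified and the two inequalities close the proof in one line; the real combinatorial content of the theorem has been absorbed into the bispecial analysis of Corollary \ref{Coro_Bispecials}.
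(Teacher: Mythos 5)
Your proof is correct and follows essentially the same route as the paper: both apply Lemmas 4.2 and 4.4 of \cite{BaPeSt08}, using Corollary \ref{Coro_Bispecials} to rule out weak bispecial factors and the complexity $\mathcal{C}_\vv(n)=2n$ to get $\Delta\mathcal{C}_\vv(n)=2<3$. The only difference is that you justify the uniform recurrence of $\vv$ (via Rote's rotation coding), whereas the paper simply asserts it; that extra care is fine and does not change the argument.
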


\begin{proof}
By Corollary \ref{Coro_Bispecials}, no bispecial factor of a Rote sequence $\vv$ is weak.
Every Rote sequence is uniformly recurrent and for all $n \geq 1$ it holds true
$\Delta \mathcal{C}_\vv(n)= 2$.
Thus by Lemma 4.2 from \cite{BaPeSt08}, we have $\#\mathcal{R}_\vv(x) \geq 3$ for every non-empty prefix $x$ of $\vv$.

On the other hand, since $\Delta\mathcal{C}_\vv(n) < 3$ for all $n \geq 0$, by Lemma 4.4 from \cite{BaPeSt08} we have $\#\mathcal{R}_\vv(x) \leq 3$ for every prefix $x$ of $\vv$.
Therefore, $\#\mathcal{R}_\vv(x) = 3$ for every non-empty prefix $x$ of $\vv$.
\end{proof}

\begin{rmrk}\label{DolcePerrin}
The previous theorem also follows from a more general result obtained by Dolce and Perrin in \cite{DoPe}.
They studied the so-called neutral sets. 
By our Corollary \ref{Coro_Bispecials}, the language $\mathcal{L}$ of a Rote sequence is a neutral set with the characteristic  $\chi(\mathcal{L})=0$.
As the language $\mathcal{L}$ is uniformly recurrent, we can apply  Corollary  5.4  of  \cite{DoPe} to deduce that any non-empty factor of a Rote sequence has exactly three return words.
\end{rmrk}

A direct consequence  of Theorem \ref{Thm_ReturnWordsNumber} is that all derivated sequences of a Rote sequence to its non-empty prefixes are over a ternary alphabet.
However, to study derivated sequences we need to know also the structure of return words, not only their number.

For this purpose we now describe the crucial relation between return words of Sturmian and Rote sequences.
Suppose that $\vv$ is a Rote sequence with a prefix $x$.
Then by Proposition \ref{Prop_SturmRote} and Lemma \ref{Prop_Factors}, $\uu = \S(\vv)$ is a Sturmian sequence, $w = \S(x)$ is a prefix of $\uu$ and the occurrences of $w$ in $\uu$ coincide with the occurrences of $x$ and $\bar{x}$ in $\vv$.
Let $r$, $s$ be two return words to $w$ in $\uu$, $r$ is the most frequent one.
Our aim is to find three return words to $x$ in $\vv$. We start with an example.

\begin{xmpl} \label{Exam_BBetaB1}
Consider the Sturmian sequence $\uu = u_0u_1 \cdots$ which is fixed by the Sturmian morphism $\psi: 0 \to 010$, $1 \to 01001$, i.e.
$$\uu = 010 01001 010 010 01001 010 010 01001 010 01001 010 \cdots \, .$$
The associated Rote sequence $\vv = v_0v_1 \cdots$ (i.e. $\uu = \S(\vv)$) starting with $0$ is
$$\vv = 001110001100011100011000111000110001110011 \cdots \,.$$

Take the prefix $w = 0$ of $\uu$. 
It has two return words $r = 01$, $s = 0$ and the occurrences of $w$ in $\uu$ are $0, 2,3,5,6,8,10,11, \ldots$
The associated prefix of $\vv$ is $x = 00$ since $0 = \S(00)$. 
As we know from Lemma  \ref{Prop_Factors}, the occurrences of $w = 0$ in $\uu$ correspond to the occurrences of $x = 00$ and $\bar{x} = 11$ in $\vv$.
To find the return words to $x = 00$ we have to determine precisely when the words $00$ and $11$ occur in $\vv$.

Clearly, there is the factor $00$ at the position $0$, i.e. $v_0v_1 = 00$.
Which word from $\{00, 11\}$ starts at the position $2$ depends only on the letter $v_2$, see Equation (\ref{Eq_InputLetter}). 
This letter is completely determined by the prefix of $\uu$ of length $2$, which is $u_0u_1 = 01$ (this is also the first return word to $0$ in $\uu$).
Indeed, $v_2 = v_0 + u_0 + u_1 \mod 2$.
Since $v_2 = 0 + 0 + 1 = 1$, there is the factor $11$ starting at position $2$, i.e. $v_2v_3 = 11$. 
In other words, the return word $01$ causes the alternation of the factors $x$ and $\bar{x}$, since it has an odd number of $1$'s.

To determine the factor $v_3v_4$ starting at position $3$ we have to compute the letter $v_3 = v_0 + u_0 + u_1 + u_2 = v_2 + u_2 \mod 2$. 
Since $v_2 = 0$, we get $v_3 = 1$ and $v_3v_4 = 11$. 
Notice that the word $u_2$ is the second return word to $0$ in $\uu$. Since $u_2$ has an even number of $1$'s, it leaves the factors $x$, $\bar{x}$ unchanged.  
In the next step we get $v_5v_6 = 00$, since $v_5 = v_3 + u_3 + u_4 = 1 + 0 + 1  = 0 \mod 2$. 
So we find the first return word to $00$ in $\vv$, it is the word $v_0v_1v_2v_3v_4 = 00111$.

Similarly we get $v_6 = v_5 + u_5 = 0 + 0 = 0$ and thus $v_6v_7 = 00$, so the next return word to $00$ in $\vv$ is the word $v_5 = 0$.

As $v_8 = v_6 + u_6 +u_7 = 0 + 0 + 1 = 1$, it holds true $v_8v_9 = 11$. So we have to wait until another factor $01$ appears in $\uu$.
It happens immediately since $u_8u_9 = 01$. Thus $v_{10} = v_8 + u_8 + u_9 = 0\mod 2$ and $v_{10}v_{11} = 00$.
Therefore the word $v_6v_7v_8v_9 = 0011$ is the last return word to $00$ in $\vv$.

In total, the prefix $x = 00$ of $\vv$ has three return words $0$, $0011$ and $00111$.
\end{xmpl}

As we have seen in Example \ref{Exam_BBetaB1}, to describe the return words to $x$, we have to distinguish if a given return word to $w$ causes the alternation of the factors $x$, $\bar{x}$ or not.
This is the meaning of the following definition.

\begin{dfntn}
A word $u = u_0u_1 \cdots u_{n-1} \in \{0, 1\}^*$ is called \emph{stable} (S) if  $|u|_1 = 0 \mod 2$. Otherwise, $u$ is \emph{unstable} (U).
\end{dfntn}

\begin{xmpl} \label{Exam_Stability}
The word $u = 0110101$ is stable while the word $v = 011010$ is unstable.
\end{xmpl}

\begin{rmrk}
In the notion of Parikh vectors, the factor $u$ is stable if its Parikh vector $V_u = {p \choose 0} \mod 2$ and it is unstable if $V_u = {p \choose 1} \mod 2$ for some number $p \in \{0,1\}$.
\end{rmrk}

\begin{lmm}\label{occurrences} 
Let $\vv$ be a Rote sequence and let $x$ be its prefix. Denote $\uu = \S(\vv)$  and $w =\S(x)$.  
An index $m$ is an occurrence of $x$ in $\vv$ if and only if $m$ is an occurrence of $w$ in $\uu$ and the prefix $u=u_0u_1\cdots u_{m-1}$ of $\uu$ is stable.
\end{lmm}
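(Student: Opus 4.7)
The plan is to reduce the statement to a direct computation via Equation (\ref{Eq_InputLetter}), using the convention that $v_0 = 0$.

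First I would invoke Lemma \ref{Prop_Factors}: an index $m$ is an occurrence of $w$ in $\uu$ if and only if $m$ is an occurrence of $x$ or of $\bar{x}$ in $\vv$. So the only thing to decide is, given such an $m$, whether the factor of $\vv$ starting at position $m$ equals $x$ or $\bar{x}$. Since $x$ and $\bar{x}$ differ in every letter, this is decided entirely by the single letter $v_m$: we have $v_m = 0$ iff the factor of $\vv$ at position $m$ is $x$ (recall $x$ is a prefix of $\vv$, hence starts with $v_0 = 0$), and $v_m = 1$ iff it is $\bar{x}$.

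Next I would apply Equation (\ref{Eq_InputLetter}), which gives
\[
v_m \;=\; v_0 + u_0 + u_1 + \cdots + u_{m-1} \pmod 2 \;=\; |u_0u_1\cdots u_{m-1}|_1 \pmod 2,
\]
using the convention $v_0 = 0$. Thus $v_m = 0$ exactly when the prefix $u = u_0u_1\cdots u_{m-1}$ of $\uu$ is stable. Combining with the previous paragraph finishes the equivalence. The case $m = 0$ is handled directly: the empty prefix has $|u|_1 = 0$ and is therefore stable, and $m=0$ is trivially an occurrence of both $w$ in $\uu$ and $x$ in $\vv$.

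There is no real obstacle here; the proof is essentially bookkeeping with Equation (\ref{Eq_InputLetter}). The only subtlety is remembering to invoke the standing convention that Rote sequences start with $0$ in order to drop the additive constant $v_0$ and identify the parity of $v_m$ with the stability of the prefix $u$.
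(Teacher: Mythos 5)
Your proof is correct and follows essentially the same route as the paper's: both reduce to Lemma \ref{Prop_Factors} plus the telescoping identity $v_m = v_0 + u_0 + \cdots + u_{m-1} \bmod 2$, so that $v_m$ agrees with $v_0$ exactly when the length-$m$ prefix of $\uu$ is stable. The only cosmetic difference is that the paper phrases the conclusion as ``$v_m$ coincides with $v_0$'' without invoking the convention $v_0=0$, while you normalize to $v_0 = 0$ first; the argument is the same.
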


\begin{proof}
Recall that $u_i =v_{i+1}  + v_i \mod 2$ holds true for  all $i\in \mathbb{N}$.
By summing up$\mod 2$  we get for the prefix $u=u_0u_1\cdots u_{m-1}$ of $\uu$:
\begin{equation}\label{inputletter}
|u|_1= \sum_{i=0}^{m-1} u_i = \sum_{i=0}^{m-1}(v_{i+1}+v_i) = v_m+v_0 \mod 2.
\end{equation}
By Lemma \ref{Prop_Factors}, $m$ is an occurrence of the prefix $x$ in $\vv$ if and only if $m$ is an occurrence of $w$ in $\uu$ and the letter $v_m$ coincides with $v_0$, which is the first letter of $x$.
The equation \eqref{inputletter}  says that the letters $v_0$ and $v_m$ coincide if and only if the prefix of $\uu$ of length $m$ is stable.
\end{proof}

We have seen that the form of return words in a Rote sequence depends on the stability of the return words in the associated Sturmian sequence.
The following definition sorts the prefixes of standard Sturmian sequences according to the stability of their return words.

\begin{dfntn}\label{rsk}
Let $w$ be a prefix of a standard Sturmian sequence $\uu$ with return words $\mathcal{R}_\uu(w) = \{r, s\}$, where $r$ is the most frequent return word. 
Let $k$ be a positive integer such that $\uu$ is a concatenation of blocks $r^ks$ and $r^{k+1}s$.
We distinguish three cases:
\begin{itemize}
\item[i)] $w$ is of type $SU(k)$, if $r$ is stable and $s$ is unstable;
\item[ii)] $w$ is of type $US(k)$, if $r$ is unstable and $s$ is stable;
\item[iii)] $w$ is of type $UU(k)$, if both $r$ and $s$ are unstable. \end{itemize}
The type of the prefix $w$ is denoted $\mathcal{T}_w$ (or $\mathcal{T}$ if the respective factor $w$ is clear). 
If the number $k$ is not essential, we write only $SU$, $US$ and $UU$.
\end{dfntn}

\begin{rmrk}
It is easy to verify that all these types appear in the case of prefixes of Sturmian sequences.
On the other hand, the fourth possible case, i.e. the type $SS$, cannot appear. 
We can prove this using the results from \cite{BaBuLuHlPu16}. It also follows from the proof of Theorem 4 in \cite{Ro94}. 

First we recall the WELLDOC property. 
A sequence $\uu \in \{0,1\}^\N$ has \textit{well distributed occurrences modulo 2} (shortly WELLDOC(2) property) if for every factor $w \in \mathcal{L}(\uu)$ we have
$$
\left\{
\left(\begin{array}{c}
|u|_0 \\
|u|_1 \\
\end{array} \right)\! \! \! \! \! \mod 2
  : uw \ \text{is a prefix of}\ \uu \right\} =
\left\{
\left(\begin{array}{c}
0 \\
0 \\
\end{array} \right) \, ,
\left(\begin{array}{c}
0 \\
1 \\
\end{array} \right) \, ,
\left(\begin{array}{c}
1 \\
0 \\
\end{array} \right) \, ,
\left(\begin{array}{c}
1 \\
1 \\
\end{array} \right) \,
\right\} \,.
$$
As shown in \cite{BaBuLuHlPu16}, all Sturmian sequences have the WELLDOC(2) property.

Let us suppose that $w$ is a prefix of $\uu$ with two stable return words, i.e. the numbers of $1$'s occurring in $r$ and $s$ are even.
Since any word $u$ such that $uw$ is a prefix of $\uu$ is a concatenation of words $r$ and $s$, $u$ contains an even number of $1$'s. It contradicts the WELLDOC(2) property of $\uu$.
\end{rmrk}

We use these prefix types to describe the return words to corresponding Rote prefixes.

\begin{thrm} \label{Thm_ReturnWords}
Let $\vv$ be a Rote sequence associated with a standard Sturmian sequence $\uu = \S(\vv)$. 
Let $x$ be a non-empty prefix of $\vv$ and $w=\S(x)$.
Then the prefix $x$ of $\vv$ has three return words $A, B, C \in \{0,1\}^+$ satisfying ($r$, $s$ and $k$  are the same as in Definition \ref{rsk}):
\begin{itemize}
\item[i)]  if $\mathcal{T}_w=  SU(k)$,     then  \ \ $\S(A0)=r$, \  \ \ $ \S(B0)=sr^{k+1}s $ \ \ and  \ \  $\S(C0)=sr^ks $;
\item[ii)]  if $\mathcal{T}_w=  US(k)$,     then  \ \ $\S(A0)=rr$, \  \ $ \S(B0)=rsr $ \ \ \ \ \ \,and  \ \  $\S(C0)=s $;
\item[iii)]  if $\mathcal{T}_w=  UU(k)$,     then  \ \ $\S(A0)=rr$, \  \ $ \S(B0)=rs $ \ \ \ \ \ \ \,and  \ \  $\S(C0)=sr $.
\end{itemize}
\end{thrm}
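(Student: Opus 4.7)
The plan is to exploit Lemma \ref{occurrences} to convert this into a question about parities of counts in the derivated sequence of $\uu$. Since $w$ is a prefix of the standard Sturmian sequence $\uu$ with return words $r, s$, we can write $\uu = \rho_1 \rho_2 \rho_3 \cdots$ with $\rho_i \in \{r, s\}$; the resulting sequence $\rho_1 \rho_2 \cdots$ is the derivated sequence $\dd_\uu(w)$, which is again a standard Sturmian sequence with $r$ the more frequent letter and is therefore a concatenation of blocks $r^k s$ and $r^{k+1} s$, both of which appear infinitely often (by aperiodicity). The occurrences of $w$ in $\uu$ are exactly the indices $j_i := |\rho_1 \cdots \rho_i|$, and by Lemma \ref{occurrences}, $j_i$ is an occurrence of $x$ in $\vv$ iff $\rho_1 \cdots \rho_i$ is stable, i.e.\ contains an even number of $1$'s.

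In each type from Definition \ref{rsk}, this stability condition reduces to a simple parity count on $\{r, s\}$-blocks, and the three return words to $x$ can be read off the block structure of $\dd_\uu(w)$. In type $SU(k)$, stability is equivalent to an even number of $s$-blocks read so far, so the three possible gaps between consecutive $x$-occurrences are a single $r$ (an intra-run step) and $s r^k s$ or $s r^{k+1} s$ (crossing one full $r^{a_j} s$ block with $a_j \in \{k, k+1\}$ to return to even $s$-count). In type $US(k)$, stability is equivalent to an even number of $r$-blocks, and the gaps are $s$ (when $s$ is read at an even $r$-count), $rr$ (two $r$'s inside a run), and $rsr$ (crossing an $s$ that was reached at odd $r$-count); the pattern $ss$ is excluded since $r$ is the more frequent letter. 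In type $UU(k)$, every block flips parity, so every second occurrence of $w$ is an $x$-occurrence and the gaps are exactly the three length-two factors $rr$, $rs$, $sr$ of $\dd_\uu(w)$.

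To translate a $\uu$-gap into the corresponding return word $D$ in $\vv$, observe that $D = v_{j_i} v_{j_i+1} \cdots v_{j_{i'}-1}$ satisfies $v_{j_i} = v_{j_{i'}} = v_0 = 0$ by our convention and by $j_i, j_{i'}$ being $x$-occurrences. Since appending $0 = v_{j_{i'}}$ to $D$ and applying $\S$ produces the differences $v_{l+1} + v_l \bmod 2$ for $l = j_i, \ldots, j_{i'}-1$, we get $\S(D \cdot 0) = u_{j_i} u_{j_i+1} \cdots u_{j_{i'}-1}$, which is exactly the $\uu$-gap. Matching the three gap lists above to the statements of (i), (ii), (iii) then yields the claimed formulas for $\S(A0), \S(B0), \S(C0)$.

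The principal obstacle is not a single computation but the organized bookkeeping of the case analysis, together with the verification that in every type all three gap patterns actually arise. This relies on both $r^k s$ and $r^{k+1} s$ appearing in $\dd_\uu(w)$ and on the alternating behaviour of the parity; combined with Theorem \ref{Thm_ReturnWordsNumber}, which guarantees that there are exactly three return words, this confirms that the listed $A, B, C$ exhaust all return words to $x$.
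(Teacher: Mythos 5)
Your proposal is correct and follows essentially the same route as the paper's proof: both reduce the problem via Lemma \ref{occurrences} to tracking the stability (parity of $|{\cdot}|_1$) of prefixes of $\uu$ along the decomposition of $\uu$ into return words to $w$, read off the gaps between consecutive stable occurrences in each of the three types, translate a gap $D$ back to $\vv$ via $\S(D0)$ using the convention $v_0=0$, and invoke Theorem \ref{Thm_ReturnWordsNumber} to conclude that the three listed words exhaust the return words. The only cosmetic difference is that you phrase the case analysis as parity bookkeeping on the derivated sequence $\dd_\uu(w)$ while the paper phrases it as a case split on which of the prefixes $ur$, $usr^{k+1}s$, $usr^ks$ occurs, and you write out all three types where the paper details only $SU(k)$.
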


\begin{proof}
Let us suppose that $\mathcal{T}_w = SU(k)$, i.e. $|r|_1 = 0 \mod 2$ and $|s|_1 = 1 \mod 2$.
Let $n $ be an occurrence of $x$ in $\vv$.
Then by Lemma \ref{occurrences} the index $n$ is an occurrence of $w$ in $\uu$ and the prefix $u=u_0u_1\ldots u_{n-1}$ is stable. 
Since $\uu$ is a concatenation of the blocks $r^{k+1}s$ and $r^{k}s$, the sequence  $\uu$ has one of the prefixes $ur$, $usr^{k+1}s$ or $usr^{k}s$.

-- \ \ If $ur$ is a prefix of $\uu$, then $n+|r|$ is an occurrence of $w$ in $\uu$.
Moreover, the prefix  of $\uu$ of length $n+|r|$ is stable.
It means that $m: =n+|r|$ is the subsequent occurrence of $x$ in $\vv$ and $A : = v_n v_{n+1}\cdots v_{m-1}$ is a return word to $x$ in $\vv$.
Let us recall our convention that $0$ is a prefix of $\vv$ and thus any return word to the prefix $x$ begins with $0$, in particular $v_m=0$.
Therefore, $r= u_nu_{n+1}\cdots u_{m-1} = \S(A0)$.

\medskip

--  \ \ If $usr^{k+1}s$  is a prefix of $\uu$, then  any index $\ell \in \{n+|s|, n+|s| +|r|, n+|s|+2|r|, \ldots, n+|s|+(k+1)|r|\} $ is an occurrence of $w$ in $\uu$.
Since $r$ is stable and $s$ is unstable, prefixes of these lengths $\ell$ are unstable and by Lemma \ref{occurrences}, such a index $\ell$ is not an occurrence of $x$ in $\vv$.
The next occurrence of $w$ in $\uu$ is  $m:=n+|s|+(k+1)|r|+|s|$.
The prefix of $\uu$ of length $m$ is stable and thus $m$ is the smallest occurrence of $x$ in $\vv$ grater than $n$.
Therefore $B:=v_n \ldots v_{m-1}$ is a return word to $x$ in $\vv$ and obviously $sr^{k+1}s  = \S(B0)$.

\medskip
The reasoning  in all remaining cases is  analogous and so we omit it.
\end{proof}

\begin{xmpl}[Example \ref{Exam_BBetaB1} continued]
Recall that the prefix $00$ of $\vv$ has three return words $A = 0$, $B = 0011$ and $C = 00111$.
The associated Sturmian prefix $S(00) = 0$ has the return words $r = 01$, $s = 0$ and $\uu$ is a concatenation of blocks $rs = 010$ and $rrs = 01010$.
Thus the type of $0$ is $\mathcal{T}_0 = US(1)$.
It holds true $\S(A0) = \S(00) = 0 = s$, $\S(B0) = \S(00110) = 0101 = rr$  and $\S(C0) = \S(001110) = 01001 = rsr$.
\end{xmpl}

It remains to explain how to determine the type of a given prefix $w$ of a standard Sturmian sequence $\uu$.
This question will be solved in Chapter \ref{types}.

\section{Derivated sequences of complementary symmetric Rote sequences} \label{Sec_DerivatedSequences}

As we have proved in Theorem \ref{Thm_ReturnWordsNumber}, any derivated sequence of a Rote sequence $\vv$ is over a ternary alphabet (we use the alphabet $\{A, B, C\}$).
In this section we study the structure of these ternary sequences in the case that $\vv$ is associated with a standard Sturmian sequence.
First we mention an important direct consequence of Theorem \ref{Thm_ReturnWords}.

\begin{crllr} \label{Thm_DetermineDerivatedWord}
Let $\vv$ be a Rote sequence  associated with  a standard Sturmian sequence  $\uu = \S(\vv)$ and let $x$ be a non-empty prefix of $\vv$.
Then the derivated sequence $\dd_\vv(x)$ is uniquely determined by the derivated sequence $\dd_\uu(w)$ of $\uu$ to the prefix $w = \S(x)$ and by the type $\mathcal{T}_w$ of the prefix $w$.
\end{crllr}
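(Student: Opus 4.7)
The plan is to show that the pair $(\dd_\uu(w), \mathcal{T}_w)$ contains enough information to reconstruct $\dd_\vv(x)$ symbol by symbol. The two ingredients are Lemma~\ref{occurrences} and Theorem~\ref{Thm_ReturnWords}: the former tells us which occurrences of $w$ in $\uu$ are also occurrences of $x$ in $\vv$, and the latter expresses the return words $A, B, C$ to $x$ explicitly in terms of $r$, $s$ and $k$.

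First I would write $\uu = w_1 w_2 w_3 \cdots$ as the unique concatenation with $w_i \in \{r,s\}$, so that the labels of the factors $w_i$ form precisely the derivated sequence $\dd_\uu(w)$. The concatenation boundaries $p_i = |w_1 \cdots w_i|$ are then exactly the occurrences of $w$ in $\uu$. Since $\mathcal{T}_w$ records the parities $|r|_1 \bmod 2$ and $|s|_1 \bmod 2$, the parity of $|w_1 \cdots w_i|_1$ is a function of $\mathcal{T}_w$ and of the first $i$ symbols of $\dd_\uu(w)$. By Lemma~\ref{occurrences}, the boundaries $p_i$ at which this parity is even are precisely the occurrences of $x$ in $\vv$; in particular, the set of occurrences of $x$ in $\vv$ is computable from $(\dd_\uu(w), \mathcal{T}_w)$.

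Next I would translate the induced sequence of $x$-occurrences into $\dd_\vv(x)$ by invoking Theorem~\ref{Thm_ReturnWords}. In each of the three types $SU(k)$, $US(k)$, $UU(k)$, the theorem prescribes $\S(A0)$, $\S(B0)$, $\S(C0)$ as specific concatenations of copies of $r$ and $s$. A gap between two consecutive $x$-occurrences in $\vv$ is filled by a run of factors $w_j$ whose $\S$-image coincides, up to the trailing letter, with one of those three prescribed concatenations; hence the gap is labeled uniquely by $A$, $B$, or $C$, and this labeling can be read off directly from $(\dd_\uu(w), \mathcal{T}_w)$.

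The only point requiring care is that within each type the three prescribed blocks $\S(A0), \S(B0), \S(C0)$ are pairwise distinguishable as words over $\{r, s\}$, so that the labeling procedure is unambiguous. This is a short verification using $k\ge 1$ and $r\neq s$ (in every type the three candidate blocks have pairwise distinct numbers of $r$'s and $s$'s). Once this is in hand, the reconstruction of $\dd_\vv(x)$ from $(\dd_\uu(w), \mathcal{T}_w)$ is a fully deterministic procedure, which is exactly the claim of the corollary.
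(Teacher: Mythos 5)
Your proposal is correct and follows essentially the paper's own route: decompose $\dd_\uu(w)$ into the three blocks prescribed by Theorem~\ref{Thm_ReturnWords}, with the stability/parity criterion of Lemma~\ref{occurrences} guaranteeing that this decomposition is uniquely determined (the paper phrases the same fact as cutting $\dd_\uu(w)$ into minimal blocks with an even number of the unstable letter, resp.\ into pairs in the $UU$ case). One small inaccuracy in your final paragraph: in type $UU$ the candidate blocks $rs$ and $sr$ have the \emph{same} numbers of $r$'s and $s$'s, so distinguishability cannot be argued via letter counts there; but they are still distinct words over $\{r,s\}$, which is all the labeling step actually requires.
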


\begin{proof}
Let $r, s$ be the return words to $w$ in $\uu$ and let $\uu$ be a concatenation of blocks $r^ks$ and $r^{k+1}s$ for some positive integer $k$.
We decompose the sequence $\dd_\uu(w) \in \{r, s\}^\N$ from the left to the right into three types of blocks $\S(A0),\S( B0)$ and $\S(C0)$ according to the type $\mathcal{T}_w$ (the relevant blocks are listed in Theorem \ref{Thm_ReturnWords}).
Then the order of letters $A,B,C$ in this decomposition is the desired derivated sequence $\dd_{\vv}(x)$ of $\vv$ to $x$.
It remains to explain that this decomposition is unique.
In the case i) we decompose $\dd_{\uu}(w)$ into the minimal blocks with an even number of letter $s$, similarly in the case ii) we decompose $\dd_{\uu}(w)$ into the minimal blocks with an even number of letter $r$.
In the case iii) we decompose $\dd_{\uu}(w)$ into the pairs of letters.
\end{proof}

The main goal of this section is to show that any derivated sequence $\dd_\vv(x)$ of $\vv$ is in fact coding of a three interval exchange transformation.  
The sequences coding the interval exchange transformation were introduced in \cite{Ose66} and they are intensively studied as they represent an important generalization of Sturmian  sequences to the multi-literal alphabets, see  \cite{Rau79}. 
Here we define only those interval exchange  transformations which appear in our description of derivated sequences $\dd_\vv(x)$.

A three interval exchange transformation $T: \left[0,1\right) \to \left[0,1\right)$ is given by two parameters $\beta, \gamma  \in (0,1)$, $\beta+\gamma <1$,  and by a permutation $\pi$ on the set $\{1, 2, 3\}$.
The interval $\left[ 0,1\right) $ is partitioned into three subintervals
$$
I_A = \left[0, \beta\right), \quad   I_B = \left[\beta, \beta + \gamma\right) \quad \text{and} \quad  I_C=\left[\beta+ \gamma, 1\right)
$$
of lengths $\beta$, $\gamma$ and $1 - \beta - \gamma$ respectively.
These intervals are then rearranged by the transformation $T$ according to the permutation $\pi$. 
More specifically:
\begin{itemize}
\item If the permutation $\pi=(3,2,1)$, then
$$
T(y) =
\left\{
\begin{array}{ll}
y + 1 - \beta \ \ \ \ \ \ \ \ \ \  & \text{if}\  y \in I_A\,, \\
y + 1 - 2\beta- \gamma & \text{if} \ y \in I_B\,, \\
y - \beta - \gamma&\text{if}\  y \in I_C\,. \\
\end{array}
\right.
$$
\item If the permutation $\pi=(2,3,1)$, then
$$
T(y) =
\left\{
\begin{array}{ll}
y + 1 - \beta \ \ \ \ \ \ \ \ \ \  &\text{if}\  y \in I_A\,, \\
y -\beta  & \text{if}\  y \in I_B\,, \\
y - \beta & \text{if}\  y \in I_C\,. \\\end{array}
\right.
$$
\end{itemize}

Let $ \rho \in \left[ 0,1 \right)$. 
The sequence $\uu = u_0u_1u_2 \cdots \in \{A,B,C\}^\N$ defined by
$$
u_n =
\left\{
\begin{array}{ll}
 A \ \ \ \ \ \ \ & \text{if} \ T^n(\rho) \in I_A\,,\\
 B & \text{if} \ T^n(\rho) \in I_B\,,\\
C & \text{if} \ T^n(\rho)  \in I_C\,
\end{array}\right.
$$
is called a \textit{3iet sequence} coding the intercept $\rho$ under the transformation $T$.

Take a standard Sturmian sequence $\uu$.
As we have mentioned before, every derivated sequence $\dd_\uu(w)$ of $\uu$ to a given prefix $w$ is also a standard Sturmian sequence.
Thus $\dd_\uu(w)$ is expressible as a 2iet sequence with the slope $\alpha$ and the intercept $\rho = 1- \alpha$.

\begin{prpstn}\label{Prop_ThreeExchange}
Let $\vv$ be a Rote sequence  associated with a standard Sturmian sequence  $\uu = \S(\vv)$, let $x$ be a non-empty prefix of $\vv$ and $w=\S(x)$.
Let $\alpha> \tfrac12$  be the slope of the Sturmian sequence $\dd_\uu(w)$.
Then the derivated sequence $\dd_\vv(x)$ is a 3iet sequence coding the intercept $\rho = 1-\alpha$ under the three interval exchange transformation $T$, where $T$ is given by the  following  parameters $\beta, \gamma$ and permutation $\pi$:
\begin{itemize}
\item[i)]  if $\mathcal{T}_w  = SU(k)$, then $\beta=\alpha$, $\gamma= \alpha -k(1- \alpha)$,   and  $\pi = (3,2,1)$;
\item[ii)]  if $\mathcal{T}_w  = US(k)$,  then $\beta=2\alpha-1$, $\gamma= 1-\alpha$,  and  $\pi = (3,2,1)$;
\item[iii)] if $\mathcal{T}_w  = UU(k)$,  then $\beta=2\alpha-1$, $\gamma= 1-\alpha$,  and   $\pi = (2,3,1)$.
\end{itemize}
\end{prpstn}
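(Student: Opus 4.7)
The plan is to exhibit the transformation $T$ as the map induced on $[0,1)$ by the two interval exchange $T_2$ that codes the standard Sturmian sequence $\dd_\uu(w)$. Since $\dd_\uu(w)$ is a standard Sturmian sequence with slope $\alpha>\tfrac12$, it is the 2iet sequence with intercept $\rho=1-\alpha$; writing $y_n:=T_2^n(\rho)$, the $n$-th letter of $\dd_\uu(w)$ equals $r$ precisely when $y_n\in[0,\alpha)$ and equals $s$ when $y_n\in[\alpha,1)$. By Corollary \ref{Thm_DetermineDerivatedWord} together with Theorem \ref{Thm_ReturnWords} and Lemma \ref{occurrences}, the successive occurrences of $x$ in $\vv$ correspond to those indices $n$ at which the next minimal stable block of $\dd_\uu(w)$ from Theorem \ref{Thm_ReturnWords}(i)--(iii) begins. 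In each of the three types I will therefore determine the partition $[0,1)=I_A\sqcup I_B\sqcup I_C$ saying which of the three return words starts at $y_n$, compare its block lengths with $\beta$, $\gamma$, $1-\beta-\gamma$, and evaluate $T_2^N(y)$ for $N$ equal to the number of Sturmian letters contained in the relevant Rote return word, checking that this matches the prescribed formula of $T$.

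In case (i) $\mathcal{T}_w=SU(k)$, every isolated $r$ keeps parity and therefore yields the return word $A$, so $I_A=[0,\alpha)$. When $y_n\in[\alpha,1)$ we are forced to read $sr^{j}s$ for some $j\in\{k,k+1\}$, the value of $j$ being decided by the first return of the $T_2$-orbit to $[\alpha,1)$ after the initial $s$ and $j$ subsequent letters $r$. A direct calculation, exploiting the relation $k=\lfloor\alpha/(1-\alpha)\rfloor$, produces the threshold
\[
I_B=\bigl[\alpha,\;2\alpha-k(1-\alpha)\bigr),\qquad I_C=\bigl[2\alpha-k(1-\alpha),\;1\bigr),
\]
so that $\beta=\alpha$ and $\gamma=\alpha-k(1-\alpha)$; iterating $T_2$ once on $I_A$, $(k+3)$ times on $I_B$ and $(k+2)$ times on $I_C$ then collapses to the three formulas of the permutation $(3,2,1)$.

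Cases (ii) and (iii) are combinatorially simpler, since every Rote return word contains at most three Sturmian letters. In both cases the split $I_A\sqcup I_B$ inside $[0,\alpha)$ is determined by whether $T_2(y_n)=y_n+1-\alpha$ stays below $\alpha$, i.e.\ whether $y_n<2\alpha-1$, yielding $\beta=2\alpha-1$ and $\gamma=1-\alpha$. One then computes $T_2^2,T_2^3,T_2$ (case ii) or $T_2^2$ on all three intervals (case iii) and checks against the 3iet formulas for permutations $(3,2,1)$ and $(2,3,1)$, respectively. A brief verification that $y_0=1-\alpha$ lies in the interval coding the first return word of $\dd_\vv(x)$ (which follows because $\dd_\uu(w)$ begins with $r^{k}s$ and $1-\alpha\in I_A$ precisely when $k\ge 2$, i.e.\ $\alpha>\tfrac23$) closes the argument.

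The principal obstacle is the bookkeeping in case (i): one has to check the inequalities $2\alpha-(k+1)(1-\alpha)<\alpha\le 2\alpha-k(1-\alpha)$ to ensure the partition is non-degenerate, and to simplify the $(k+3)$- and $(k+2)$-fold iterates of $T_2$ into the compact expressions $y+1-2\beta-\gamma$ and $y-\beta-\gamma$. Once this arithmetic has been carried out, the remaining two cases reduce to routine verifications.
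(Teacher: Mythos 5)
Your proposal is correct and follows essentially the same route as the paper: both identify $T$ as the accelerated (first‑return) map of the two interval exchange $G$ coding $\dd_\uu(w)$ with intercept $1-\alpha$, use the same partition points ($I_A=[0,\alpha)$ with the split of $[\alpha,1)$ at $2\alpha-k(1-\alpha)$ in case (i), and the split of $[0,\alpha)$ at $2\alpha-1$ in cases (ii)–(iii)), and the same iterates $G$, $G^{k+3}$, $G^{k+2}$ resp.\ $G^2,G^3,G$ resp.\ $G^2$. The only cosmetic difference is that the paper treats case (iii) first as the simplest and phrases case (i) as a checklist of inclusions $G^j(I_B)\subset I_r$ etc.; your closing remark about $1-\alpha\in I_A$ is harmless but unnecessary, since the intercept need not lie in $I_A$ for the coding to match.
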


\begin{proof}
Since any derivated sequence of a standard Sturmian sequence is standard as well, $\dd_\uu(w)$  is coding of the intercept $1-\alpha$ under the transformation $G: [0,1) \to [0,1)$ defined by
$$
G(y)= y+1-\alpha, \ \ \text {if} \ \ y\in I_r=[0, \alpha)\quad \text{and} \quad G(y)= y-\alpha, \ \ \text {if} \ \ y\in I_s=[\alpha,1)\,.
$$

Let us start with the simplest case iii): 
\ By Theorem \ref{Thm_ReturnWords}, the derivated sequence $\dd_{\vv}(x)$ of $\vv$ to the prefix $x$ is determined by the decomposition of $\dd_{\uu}(w)$ into blocks of length $2$.
The order of blocks $rr$, $rs$ and $sr$ in the decomposition of $\dd_{\uu}(w)$ is given by the transformation $G^2$ under which the point $\rho =1-\alpha$ is coded.
A simple computation gives:
$$
G^2(y) =
\left\{
\begin{array}{lll}
y + 2 - 2\alpha \ \ & \text{if}\  &y \in  [0, 2\alpha-1)\,, \\
y +1-2\alpha  & \text{if}\  &y \in [2\alpha-1, \alpha)\,, \\
y +1-2\alpha  & \text{if}\  &y \in [\alpha, 1)\,. \\
\end{array}
\right.
$$
It means that $G^2$ exchanges three intervals under the permutation $(2,3,1)$  with the parameters $\beta, \gamma$ as claimed in point iii) of the statement.

\medskip

Let $w$ be of type $SU(k)$ as assumed in i). 
Let us denote the intervals 
$$
I_A= [0, \alpha), \ \  I_B=[\alpha, 2\alpha -k(1-\alpha)), \ \ I_C = [ 2\alpha -k(1-\alpha),1)
$$
and define the transformation
$$
T(y) =
\left\{
\begin{array}{lll}
G(y)  \ \ \ \ \ \ \ \ &  \text{if}\ & y \in  I_A\,, \\
G^{k+3}(y)&  \text{if}\  & y \in I_B\,, \\
G^{k+2}(y)& \text{if}\  & y \in I_C\,. \\
\end{array}
\right.
$$

Recall that the parameter $k$ in the type of $w$ means that  $\dd_\uu(w)$ is a concatenation of blocks $r^ks$ and $r^{k+1}s$.
By Theorem \ref{Thm_ReturnWords}, the derivated sequence $\dd_{\vv}(x)$ of $\vv$ to the prefix $x$ is determined by the unique decomposition of $\dd_{\uu}(w)$ into blocks $r$, $sr^{k+1}s$ and $sr^ks$.
As mentioned in Section \ref{S_SturmianSequences}, $k= \lfloor \tfrac{\alpha}{1-\alpha}\rfloor$, i.e.  $\alpha > k(1-\alpha)$ and $\alpha < (k+1)(1-\alpha)$.
Therefore the intervals $I_A,I_B$, and $I_C$ are well defined.

\medskip

\noindent To prove i), one has to check

 \begin{enumerate}
\item \quad $I_A  \subset I_r$;
\item  \quad  $I_B  \subset I_s, \   \ G^j(I_B) \subset I_r \ \text{for all } j =1,2,\ldots, k+1, \   \ G^{k+2}(I_B) \subset I_s$;
\item \quad  $I_C  \subset I_s, \   \ G^j(I_C) \subset I_r \ \text{for all } j =1,2,\ldots, k, \   \ G^{k+1}(I_C) \subset I_s$;
\item \quad $T$ is an interval exchange transformation under the permutation (3,2,1), i.e., \\[1mm]
{$T(I_A) = \bigl[1-\alpha, 1\bigr)$, \ \  $T(I_B) = \bigl[ (k+1)(1-\alpha) -\alpha, 1-\alpha\bigr)$, \ \  $ T(I_C) = \bigl[0,  (k+1)(1-\alpha) -\alpha\bigr)$ \ . }
\end{enumerate}
Validity of  (1)--(4) follows directly from the definition of $G$.

\medskip

Proof of Point ii) is analogous.
\end{proof}

\begin{rmrk}
It can be shown that all three transformations $T$ from Proposition \ref{Prop_ThreeExchange} satisfy the so called   \textit{i.d.o.c. property} \cite{Kea75}.
For a three interval exchange transformation with the discontinuity points $\beta$ and $\beta+\gamma$ it means that $T^n(\beta)\neq \beta+\gamma$ for all $n \in \mathbb{Z}$.
Property  i.d.o.c. implies that the factor complexity of any  derivated sequence $\dd_\vv(x)$  is $\mathcal{C}(n) =2n+1$.
\end{rmrk}

\begin{crllr}\label{theSame}
Let $\vv$ be a Rote sequence  associated with  a standard Sturmian sequence  $\uu = \S(\vv)$ and let $x, x'$ be two non-empty prefixes of $\vv$.
Denote $w=\S(x)$  and $w'=\S(x')$. The derivated sequence of $\vv$ to the prefix $x$ coincides with the derivated sequence of $\vv$ to the prefix $x'$ if and only if the types of $w$ and $w'$ are the same and the derivated sequence of $\uu$ to the prefix $w$ coincides with the derivated sequence of $\uu$ to the prefix $w'$.
In other words,
$$\dd_{\vv}(x) = \dd_{\vv}(x') \quad \quad \quad \text{ iff} \quad \quad\quad  \mathcal{T}_{w} = \mathcal{T}_{w'} \quad \text{and}\quad \dd_{\uu}(w) = \dd_{\uu}(w') \,.$$
\end{crllr}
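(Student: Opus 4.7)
The plan is to treat the two implications separately, using Corollary \ref{Thm_DetermineDerivatedWord} for the easy direction and Proposition \ref{Prop_ThreeExchange} as the main tool for the harder one.

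\textbf{The easy direction} ($\Leftarrow$): Assume $\mathcal{T}_w = \mathcal{T}_{w'}$ and $\dd_{\uu}(w) = \dd_{\uu}(w')$. Corollary \ref{Thm_DetermineDerivatedWord} exhibits $\dd_{\vv}(x)$ as a deterministic function of the pair $(\mathcal{T}_w, \dd_{\uu}(w))$: one decomposes $\dd_{\uu}(w)$ into the blocks prescribed by Theorem \ref{Thm_ReturnWords}. Applying the same decomposition to equal inputs gives equal outputs, so $\dd_{\vv}(x) = \dd_{\vv}(x')$.

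\textbf{The harder direction} ($\Rightarrow$): Assume $\dd_{\vv}(x) = \dd_{\vv}(x')$ and denote by $\alpha, \alpha'$ the slopes of the standard Sturmian sequences $\dd_{\uu}(w)$ and $\dd_{\uu}(w')$. By Proposition \ref{Prop_ThreeExchange}, both are 3iet codings of the common intercept under transformations whose parameters $(\beta,\gamma,\pi)$ are determined by (type, $k$, slope). The plan is to recover the type and the slope from the abstract ternary sequence $\dd_{\vv}(x)$ itself.

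Letter frequencies of $\dd_{\vv}(x)$ coincide with the interval lengths $\beta, \gamma, 1-\beta-\gamma$. Inspecting the three cases of Proposition \ref{Prop_ThreeExchange} shows that in type $SU(k)$ all three frequencies are pairwise distinct (any coincidence would force $\alpha$ rational, contradicting that $\dd_\uu(w)$ is Sturmian), whereas in types $US(k)$ and $UU(k)$ exactly two of them coincide, both equal to $1-\alpha$. Hence the frequency multiset already separates type $SU$ from the other two, and within $SU$ one recovers $\alpha = \beta$ as the largest frequency and $k$ from the two remaining frequencies. In types $US$ and $UU$, the unique non-repeated frequency is $2\alpha-1$, determining $\alpha$, and then $k = \lfloor \alpha/(1-\alpha)\rfloor$.

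The remaining task is to distinguish $US(k)$ from $UU(k)$, since they share $(\beta,\gamma)$ and differ only by the permutation, $(3,2,1)$ versus $(2,3,1)$. This is the main obstacle. It can be handled by a combinatorial inspection of the allowed 2-letter factors of $\dd_{\vv}(x)$: for each candidate type one applies the corresponding inverse substitution from Theorem \ref{Thm_ReturnWords} (namely $A\mapsto rr$, $B\mapsto rsr$, $C\mapsto s$ in case $US$, and $A\mapsto rr$, $B\mapsto rs$, $C\mapsto sr$ in case $UU$), and observes that only the true type desubstitutes $\dd_{\vv}(x)$ into a standard Sturmian sequence over $\{r,s\}$ with the expected slope $\alpha$. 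The inconsistent substitution produces forbidden Sturmian factors.

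Once $\mathcal{T}_w = \mathcal{T}_{w'}$ is established, applying the common inverse substitution to $\dd_{\vv}(x) = \dd_{\vv}(x')$ yields $\dd_{\uu}(w)$ on one side and $\dd_{\uu}(w')$ on the other, so $\dd_{\uu}(w) = \dd_{\uu}(w')$, completing the proof.
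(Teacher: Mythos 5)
Your easy direction and the frequency analysis in the hard direction are essentially the paper's argument: the paper likewise observes that in type $SU(k)$ the three interval lengths $\alpha$, $\alpha-k(1-\alpha)$, $(k+1)(1-\alpha)-\alpha$ are pairwise distinct (by irrationality of $\alpha$), while in types $US$ and $UU$ two letters share the frequency $1-\alpha$, so the frequency multiset separates $SU$ from the rest and recovers the slope; equality of slopes then gives $\dd_\uu(w)=\dd_\uu(w')$ because both are standard Sturmian.

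The gap is in your treatment of the one remaining case, distinguishing $US(k)$ from $UU(k)$. The paper resolves this by a second invariant of 3iet languages: the language of a 3iet sequence is closed under reversal if and only if the permutation is $(3,2,1)$ (citing Ferenczi--Holton--Zamboni and Ferenczi--Monteil). Since $US$ yields $\pi=(3,2,1)$ and $UU$ yields $\pi=(2,3,1)$, and reversal-closure is invariant under relabelling of letters, this cleanly separates the two types. You instead assert that applying the ``wrong'' inverse substitution ($A\mapsto rr$, $B\mapsto rs$, $C\mapsto sr$ to a $US$-type sequence, or vice versa) ``produces forbidden Sturmian factors,'' but you never exhibit such a factor or prove the claim. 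It is not automatic: a short computation shows that the wrongly decoded binary sequence still has letter frequencies $\alpha$ and $1-\alpha$ and average block length $2$, so no frequency or length statistic detects the failure; one would have to verify a balance or complexity violation explicitly, and one would also have to rule out \emph{all} letter assignments compatible with the frequencies (the letters $B$ and $C$ have equal frequency in both types, so the assignment is only determined up to swapping them). As written, the decisive step of the forward implication is an unproven claim, so the proof is incomplete; either supply that combinatorial verification or replace it with the reversal-closure argument.
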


\begin{proof}
Let us assume that  $\dd_\vv(x)=\dd_\vv(x')$.
We use two well known properties of 3iet sequences, see for example \cite{FeHaZa03} and \cite{FeMo10}:

--  the frequencies of letters in a 3iet sequence correspond to the lengths of the intervals $I_A$, $I_B$  and  $I_C$;

-- the  language of a 3iet sequence is closed under reversal if and only if the permutation is $(3,2,1)$.

\medskip

By Proposition \ref{Prop_ThreeExchange}, the language of $\dd_\vv(x)$ is not closed under reversal if and only if $w = \S(x)$ is of type $UU$.
Moreover, if $w$ is of type $SU$, the frequencies of letters are: $\alpha$, $\alpha - k(1-\alpha)$ and  $(k+1)(1-\alpha) - \alpha$.
Since $\alpha$ is irrational, these three lengths are pairwise distinct. 
If $w$ is of type $US$ or $UU$, the letters $B$ and $C$ have the same frequency $1-\alpha$.
Therefore, the assumption $\dd_\vv(x)=\dd_\vv(x')$ implies that the type of $w$ and the type of $w'$  are the same.
Moreover, the lengths of the intervals $I_A, I_B, I_C$, i.e. the frequencies of the letters, must be the same. 
It implies that the slopes of $\dd_\uu(w)$  and $\dd_\uu(w')$ are equal.
Since $\dd_\uu(w)$ and $\dd_\uu(w')$ are both standard Sturmian sequences with the same slope, obviously $\dd_\uu(w)=\dd_\uu(w')$.

The opposite implication follows from Corollary \ref{Thm_DetermineDerivatedWord}.
\end{proof}

The proof of Corollary \ref{Thm_DetermineDerivatedWord} gives us the instructions how to construct the derivated sequences of a Rote sequence: we need to know both the derivated sequences of the associated Sturmian sequence $\uu$ and the types of prefixes of $\uu$.
Remind that we work only with Rote sequences associated with standard Sturmian sequences, thus $\uu$ is always standard and any prefix of $\uu$ is left special.
Due to \eqref{onlyRight} and  Corollary \ref{Coro_Bispecials}, we can focus only on the bispecial prefixes of standard Sturmian sequences.

\section{Types of bispecial prefixes of Sturmian sequences}\label{types}

Consider a standard Sturmian sequence $\uu$ with the directive sequence $\zz \in \{b, \beta\}^\N$.
It means that there is a sequence $(\uu^{(n)})_{n \geq 0}$ of standard Sturmian sequences such that for every $n \in \N$
\begin{equation}\label{preimage}
\uu = \varphi_{z_0z_1 \ldots z_{n-1}}(\uu^{(n)})\,.
\end{equation}
\noindent {\bf Convention. } We order the bispecial prefixes of $\uu$ by their length and we denote the $n^{th}$ bispecial prefix of $\uu$ by $w^{(n)}$. 
In particular, $w^{(0)} =
\varepsilon$,  $w^{(1)} = 0$ if $z_0 = b$ and $w^{(1)} = 1$ if $z_0 = \beta$.
\medskip

Our aim is to find for each $n \in \mathbb{N}$ the derivated sequence of $\uu$ to the prefix $w^{(n)}$ and to determine the type of $w^{(n)}$.
First we need to know how bispecial factors and their return words change under the application of morphisms $\varphi_b$ and $\varphi_\beta$.
It is shown in \cite{KlMePeSt18}.

\begin{lmm} \label{Lem_ImageB}
Let $\uu', \uu$ be Sturmian sequences such that $\uu = \varphi_b(\uu')$.
\begin{itemize}
\item[i)] For every bispecial factor $w'$ of $\uu'$, the factor $w = \varphi_b(w')0$ is a bispecial factor of $\uu$.
\item[ii)] Every bispecial factor $w$ of $\uu$ which is not empty can be written as $w = \varphi_b(w')0$ for a uniquely given bispecial factor $w'$ of $\uu'$. 
\item[iii)] The words $r', s'$ are return words to a bispecial prefix $w'$ of $\uu'$ if and only if $r = \varphi_b(r'), s = \varphi_b(s')$ are return words to a bispecial prefix $w = \varphi_b(w')0$ of $\uu$.
Moreover,  $\dd_\uu(w) = \dd_{\uu'}(w')$.
\end{itemize}
\end{lmm}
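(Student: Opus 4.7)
My plan is to exploit the fact that $\varphi_b$ is a \emph{marked} morphism: both images $\varphi_b(0)=0$ and $\varphi_b(1)=01$ begin with $0$, and since $11$ is not a factor of any Sturmian sequence, every $1$ occurring in $\uu$ must be the second letter of a block $\varphi_b(1)=01$. Consequently, $\uu$ admits a unique decomposition into blocks $0$ and $01$, and this yields a bijection between positions in $\uu'$ and the block-start positions in $\uu$ via $i\mapsto |\varphi_b(u'_0\cdots u'_{i-1})|$. I will use this desubstitution as the main tool throughout.

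For part (i), given a bispecial $w'$ in $\uu'$, I would verify both specialities of $w = \varphi_b(w')0$ directly. For right specialness: $w'1 \in \mathcal{L}(\uu')$ gives $\varphi_b(w')01 \in \mathcal{L}(\uu)$, hence $w1\in\mathcal{L}(\uu)$, while $w'0$ extended by either letter in $\uu'$ gives $\varphi_b(w')0$ followed by a new block starting with $0$, hence $w0\in\mathcal{L}(\uu)$. For left specialness, the images $\varphi_b(0w')=0\varphi_b(w')$ and $\varphi_b(1w')=01\varphi_b(w')$ are both factors of $\uu$ which, completed to the right by the leading $0$ of the next block, yield $0w$ and $1w$. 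For part (iii), two consecutive occurrences $i<j$ of $w'$ in $\uu'$ translate via the block bijection into two consecutive occurrences of $w$ in $\uu$, and the intervening return word is exactly $\varphi_b(u'_i\cdots u'_{j-1})=\varphi_b(r')$; since $\uu=\varphi_b(\uu')$ is obtained by concatenating $\varphi_b$-images of the return words to $w'$ in the order prescribed by $\dd_{\uu'}(w')$, the equality $\dd_\uu(w)=\dd_{\uu'}(w')$ follows immediately.

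The main obstacle I anticipate is part (ii): proving that every non-empty bispecial $w$ of $\uu$ really comes from a bispecial $w'$ of $\uu'$ and that the preimage is unique. I would first observe that every right special factor of $\uu$ must end in $0$: otherwise, its terminal $1$ would complete a block $01$, and the subsequent block would necessarily start with $0$, ruling out the extension $w1$. Writing $w=v0$, the unique block decomposition of $\uu$ then forces $v$ to coincide with a concatenation of complete blocks, i.e.\ $v=\varphi_b(w')$ for a unique $w'\in\mathcal{L}(\uu')$. The two right extensions $w0,w1$ of $w$ in $\uu$ correspond respectively to the next block being $\varphi_b(0)=0$ or $\varphi_b(1)=01$, which in turn means $w'0,w'1\in\mathcal{L}(\uu')$; symmetrically, the two letters preceding $w$ in $\uu$ arise from the previous block being $\varphi_b(0)$ or $\varphi_b(1)$, giving $0w',1w'\in\mathcal{L}(\uu')$. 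Hence $w'$ is bispecial in $\uu'$, completing the bijection promised by the lemma.
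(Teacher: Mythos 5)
The paper does not prove this lemma itself --- it is quoted from \cite{KlMePeSt18} --- so there is no in-paper argument to compare against. Your self-contained proof via the unique decomposition of $\uu$ into the blocks $\varphi_b(0)=0$ and $\varphi_b(1)=01$ is the standard desubstitution argument and is essentially correct: parts (i) and (iii) go through as you describe, and the key observation for (ii), that a right special factor of $\uu$ must end in $0$ because every $1$ in $\uu$ is followed by the $0$ opening the next block, is exactly the right one.

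One step in (ii) should be made explicit. From $w=v0$ alone you cannot yet conclude that $v$ is a concatenation of complete blocks: a priori $v$ could begin with the $1$ sitting inside a block $01$. You need the mirror observation that a left special factor of $\uu$ must begin with $0$ (a factor beginning with $1$ is always preceded by $0$), together with the fact that every occurrence of $0$ in $\uu$ is a block start; only then does $w$ begin at a block boundary and end (at its final $0$) at a block boundary, so that $v=\varphi_b(w')$ for a unique $w'$. You do use this implicitly one sentence later when you read off the letters preceding $w$ from the previous block, so this is an omitted line rather than a flaw in the approach. The uniqueness of $w'$ also deserves a word: it holds because $\{0,01\}$ is a suffix code, equivalently because the block parse of $v$ is determined by the positions of its $0$'s.
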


\begin{lmm} \label{Lem_ImageBeta}
Let $\uu', \uu$ be Sturmian sequences such that $\uu = \varphi_\beta(\uu')$.
\begin{itemize}
\item[i)] For every bispecial factor $w'$ of $\uu'$, the factor $w = \varphi_\beta(w')1$ is a bispecial factor of $\uu$.
\item[ii)] Every bispecial factor $w$ of $\uu$ which is not empty can be written as $w = \varphi_\beta(w')1$ for a uniquely given bispecial factor $w'$ of $\uu'$. 
\item[iii)] The words $r', s'$ are return words to a bispecial prefix $w'$ of $\uu'$ if and only if $r = \varphi_\beta(r'), s = \varphi_\beta(s')$ are return words to a bispecial prefix $w = \varphi_\beta(w')1$ of $\uu$.
Moreover,  $\dd_\uu(w) = \dd_{\uu'}(w')$.
\end{itemize}
\end{lmm}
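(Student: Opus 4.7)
The plan is to reduce the statement to its already-stated twin Lemma \ref{Lem_ImageB} via the letter-exchange symmetry. The key identity is
\[
\varphi_\beta \;=\; E \circ \varphi_b \circ E,
\]
which one checks by evaluating both sides on $0$ and $1$: $E\varphi_b E(0) = E\varphi_b(1) = E(01) = 10 = \varphi_\beta(0)$, and $E\varphi_b E(1) = E\varphi_b(0) = E(0) = 1 = \varphi_\beta(1)$. Consequently, if $\uu = \varphi_\beta(\uu')$, then $E(\uu) = \varphi_b(E(\uu'))$, so Lemma \ref{Lem_ImageB} applies to the Sturmian pair $E(\uu') \mapsto E(\uu)$.

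First I would record the trivial but crucial fact that the letter exchange $E$ is a language isomorphism of any Sturmian sequence onto its image: a word $y'$ is a bispecial factor of a sequence $\yy'$ if and only if $E(y')$ is a bispecial factor of $E(\yy')$, and the set of return words to $y'$ in $\yy'$ is the $E$-image of the set of return words to $E(y')$ in $E(\yy')$, with identical order of occurrence. This will let me transport every claim through $E$ without changing its combinatorial content.

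For (i) I would take a bispecial $w'$ of $\uu'$, observe that $E(w')$ is bispecial in $E(\uu')$, and apply Lemma \ref{Lem_ImageB}(i) to conclude that $\varphi_b(E(w'))\,0$ is a bispecial factor of $\varphi_b(E(\uu')) = E(\uu)$. Applying $E$ once more and using $E\varphi_b E = \varphi_\beta$ together with $E(0) = 1$ yields
\[
E\bigl(\varphi_b(E(w'))\,0\bigr) \;=\; \varphi_\beta(w')\,1,
\]
which is therefore bispecial in $\uu$. Statement (ii) is obtained by running the same argument backwards: a non-empty bispecial $w$ of $\uu$ yields a non-empty bispecial $E(w)$ of $E(\uu) = \varphi_b(E(\uu'))$, Lemma \ref{Lem_ImageB}(ii) gives a unique bispecial $v''$ of $E(\uu')$ with $E(w) = \varphi_b(v'')\,0$, and then $w' := E(v'')$ is the unique bispecial factor of $\uu'$ satisfying $w = \varphi_\beta(w')\,1$. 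Statement (iii) follows in exactly the same way by applying Lemma \ref{Lem_ImageB}(iii) inside $E(\uu)$; the equality $\dd_\uu(w) = \dd_{\uu'}(w')$ is preserved because $E$ preserves the order in which return words appear, and this order is what the derivated sequence records.

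The only delicate point, and the one I would be careful about, is the bookkeeping at the right end of the factor: the "extra letter" appended by Lemma \ref{Lem_ImageB} is $0$, matching the last letter of $\varphi_b(a)$, while here the corresponding extra letter must be $1$, matching the last letter of $\varphi_\beta(a)$. The identity $E(0)=1$ is exactly what makes the transport through $E$ produce the correct suffix, but one should verify that this is the same $1$ that one would obtain from a direct desubstitution argument (each $0$ in $\uu$ is preceded by a $1$, forcing bispecial factors of $\uu$ to end in $1$). Once this is settled, the rest of the proof is a mechanical translation through the symmetry.
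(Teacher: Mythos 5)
Your proposal is correct. Note, however, that the paper itself offers no proof of either Lemma \ref{Lem_ImageB} or Lemma \ref{Lem_ImageBeta}: both are quoted from \cite{KlMePeSt18} with the single remark ``It is shown in \cite{KlMePeSt18}.'' So there is no in-paper argument to compare against; what you have produced is a genuine reduction of the $\varphi_\beta$ lemma to its $\varphi_b$ twin. The reduction is sound: the conjugation identity $\varphi_\beta = E\circ\varphi_b\circ E$ checks out on both letters, it turns $\uu=\varphi_\beta(\uu')$ into $E(\uu)=\varphi_b(E(\uu'))$, and since $E$ is an involution that carries the language of a sequence onto the language of its image letter-by-letter, it preserves bispeciality, the property of being a prefix, the set of return words, and the order in which they occur (hence the derivated sequence up to the renaming that the paper explicitly quotients out). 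The bookkeeping $E\bigl(\varphi_b(E(w'))\,0\bigr)=\varphi_\beta(w')\,1$ is exactly right, and your consistency check at the right end is also correct: in $\varphi_\beta(\uu')$ every $0$ occurs only as the second letter of the block $10$, so any factor $w$ with $w0\in\mathcal{L}(\uu)$ and $w\neq\varepsilon$ must end in $1$, matching the appended letter produced by the symmetry. The one caveat worth stating explicitly if you write this up is that your argument proves only the equivalence of the two lemmas; Lemma \ref{Lem_ImageB} itself still has to be taken from \cite{KlMePeSt18} (or proved directly by desubstitution with respect to the code $\{0,01\}$), so the symmetry buys a halving of the work rather than a self-contained proof.
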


\begin{xmpl} \label{Exam_BBetaB3}
The sequence from Example \ref{Exam_BBetaB1} is the fixed point of the morphism $\varphi_{b \beta b}$.
So it is a standard Sturmian sequence with the directive sequence $\zz = b \beta b b \beta b b \beta b \cdots$.

The $0^{th}$ bispecial prefix of $\uu$ is the empty word $w^{(0)} = \varepsilon$.
Its return words are $0$ and $1$ and clearly $\dd_{\uu}(w^{(0)}) = \uu$.

By Lemma \ref{Lem_ImageB}, the bispecial prefix $w^{(1)}$ can be obtained from $\varepsilon$ using the morphism $\varphi_b$: $w^{(1)} = \varphi_b(\varepsilon)0 = 0$.
It means that $w^{(1)} = 0$ originates in the sequence $\uu^{(1)}$ which has the directive sequence $\beta b b \beta b b \cdots$.
The return words to $w^{(1)}$ are $\varphi_b(0) = 0$ and $\varphi_b(1) = 01$ and its derivated sequence is $\dd_{\uu}(w^{(1)}) = \dd_{\uu^{(1)}}(\varepsilon) = \uu^{(1)}$.

Similarly, the prefix $w^{(2)}$ arises from $\varepsilon$ by application of $\varphi_b \varphi_\beta$, i.e.
$$
w^{(2)} = \varphi_b (\varphi_\beta(\varepsilon)1)0 = 010\,.
$$
Thus $w^{(2)}$ originates in $\uu^{(2)}$ with the  directive sequence $b b \beta b b \beta \cdots$.
The return words to $w^{(2)}$ are $\varphi_b\varphi_\beta(0) = 010$ and $\varphi_b\varphi_\beta(1) = 01$ and its derivated sequence is $\dd_{\uu}(w^{(2)}) =  \dd_{\uu^{(2)}}(\varepsilon) = \uu^{(2)}$.
\end{xmpl}

The method explained in Example \ref{Exam_BBetaB3} can be easily generalized.
Let us formalize this procedure.

\begin{crllr}\label{composition}
Let $\zz \in \{b, \beta\}^\N$ be a directive sequence of a standard Sturmian sequence $\uu$ and let $n \in \mathbb{N}$.
Denote by $r^{(n)}$ the most frequent and by $s^{(n)}$ the less frequent return words to the $n^{th}$ bispecial prefix $w^{(n)}$ of $\uu$.
Then the derivated sequence $\dd_\uu(w^{(n)}) $ is a standard Sturmian sequence with the directive sequence $\zz^{(n)} = z_nz_{n+1}z_{n+2}\cdots$.
Moreover
\begin{itemize}
\item[i)] If $z_n = b $,  then
$r^{(n)} = \varphi_{z_0z_1 \cdots z_{n-1}}(0)$  \  and  \    $s^{(n)} = \varphi_{z_0z_1 \ldots z_{n-1}}(1)$.
\item[ii)] If $z_n = \beta $,  then
$r^{(n)} = \varphi_{z_0z_1 \cdots z_{n-1}}(1)$  \  and  \    $s^{(n)} = \varphi_{z_0z_1 \ldots z_{n-1}}(0)$.
\end{itemize}
\end{crllr}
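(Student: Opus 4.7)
\medskip

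The plan is to prove the corollary by induction on $n$, using Lemmas \ref{Lem_ImageB} and \ref{Lem_ImageBeta} to transfer the statement between $\uu$ and its desubstitution $\uu^{(1)}$, which has directive sequence $z_1 z_2 z_3 \cdots$.

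For the base case $n=0$, the prefix $w^{(0)} = \varepsilon$ has return words $0$ and $1$, and the derivated sequence $\dd_\uu(\varepsilon)$ is $\uu$ itself, so its directive sequence is $\zz = \zz^{(0)}$. To identify $r^{(0)}$ and $s^{(0)}$ we invoke Observation \ref{Obs_Epsilon}(ii)--(iii): if $z_0 = b$ then $0$ is the more frequent letter of $\uu$ and the claim reduces to $r^{(0)} = 0 = \varphi_\varepsilon(0)$, $s^{(0)} = 1 = \varphi_\varepsilon(1)$; the case $z_0 = \beta$ is symmetric.

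For the inductive step, fix $n \geq 1$ and assume the statement for all standard Sturmian sequences at index $n-1$. Apply it to $\uu^{(1)}$, which by construction has directive sequence $z_1 z_2 \cdots$. The key preparatory observation is that the maps $w' \mapsto \varphi_b(w')0$ and $w' \mapsto \varphi_\beta(w')1$ provided by parts (i)--(ii) of Lemmas \ref{Lem_ImageB}, \ref{Lem_ImageBeta} are length-monotone bijections between the bispecial factors of $\uu^{(1)}$ and the non-empty bispecial factors of $\uu$ (monotonicity follows because $\varphi_b, \varphi_\beta$ map each letter to a non-empty word on a binary alphabet). Consequently the $n^{th}$ bispecial prefix of $\uu$ is the image of the $(n-1)^{th}$ bispecial prefix of $\uu^{(1)}$ under the appropriate map, i.e. $w^{(n)} = \varphi_{z_0}((w')^{(n-1)})\cdot a$ with $a \in \{0,1\}$ chosen according to $z_0$.

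Now apply part (iii) of the relevant Lemma: the return words to $w^{(n)}$ in $\uu$ are precisely the $\varphi_{z_0}$-images of the return words to $(w')^{(n-1)}$ in $\uu^{(1)}$, and $\dd_\uu(w^{(n)}) = \dd_{\uu^{(1)}}((w')^{(n-1)})$. The induction hypothesis, applied to $\uu^{(1)}$ and its $(n-1)^{th}$ bispecial prefix, gives that $\dd_{\uu^{(1)}}((w')^{(n-1)})$ is a standard Sturmian sequence with directive sequence $z_n z_{n+1} \cdots = \zz^{(n)}$, and furnishes the formulas $(r')^{(n-1)} = \varphi_{z_1 \cdots z_{n-1}}(0)$, $(s')^{(n-1)} = \varphi_{z_1 \cdots z_{n-1}}(1)$ when $z_n = b$ (and the swapped formulas when $z_n = \beta$). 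Since the bijection on return words from the Lemma preserves which one is more frequent (it multiplies frequencies uniformly by the inverse of the image length), we obtain $r^{(n)} = \varphi_{z_0}\bigl((r')^{(n-1)}\bigr) = \varphi_{z_0 z_1 \cdots z_{n-1}}(0)$ and $s^{(n)} = \varphi_{z_0 z_1 \cdots z_{n-1}}(1)$ in the case $z_n = b$, and analogously for $z_n = \beta$, completing the induction.

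The only real obstacle is the bookkeeping required to justify that the ordering of bispecial prefixes by length is respected by the bijection of Lemmas \ref{Lem_ImageB} and \ref{Lem_ImageBeta}; everything else is a direct unpacking of those lemmas combined with the definitions of the directive sequence and the convention $\varphi_\varepsilon = \mathrm{id}$.
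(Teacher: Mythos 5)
Your proof is correct and takes essentially the same route as the paper's: induction on $n$ with base case $w^{(0)}=\varepsilon$, using parts (i)--(iii) of Lemmas \ref{Lem_ImageB} and \ref{Lem_ImageBeta} to pass between $\uu$ and $\uu^{(1)}$, and observing that applying $\varphi_{z_0}$ does not change which return word is most frequent. Your only addition is the explicit check that the bijection of those lemmas respects the ordering of bispecial prefixes by length (a point the paper leaves implicit); note that your parenthetical justification via non-erasingness alone is not quite enough in general, but it is easily completed using the fact that bispecial prefixes of a Sturmian sequence are nested, so that shorter ones are prefixes of longer ones and their $\varphi_{z_0}$-images inherit the same strict length ordering.
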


\begin{proof}
We proceed with induction on $n\in \mathbb{N}$.

Clearly, the return words to the bispecial prefix $w^{(0)} = \varepsilon$  are letters $0$ and $1$ and thus the derivated sequence  of $\uu$ to $w^{(0)}$ is the sequence $\uu$  itself.
Using the notation of \eqref{preimage}, we have $\uu = \varphi_{z_0}(\uu^{(1)})$.
If $z_0 = b$ then $0$ is the most frequent letter in $\uu$, if $z_0 = \beta$ then $1$ is the most frequent letter of $\uu$.

The directive sequence of $\uu^{(1)}$ is $\zz^{(1)} = z_1z_{2}z_{3}\cdots$.
Denote $\uu'= \uu^{(1)}$.
Let $w'$ be the $n^{th}$ bispecial prefix of $\uu'$ and $r', s'$ be its return words.
By Lemmas \ref{Lem_ImageB} and \ref{Lem_ImageBeta}, the words $\varphi_{z_0}(r')$ and $\varphi_{z_0}(s')$ are the return words to the $(n+1)^{st}$ bispecial prefix of $\uu$ (we add $1$ for the bispecial prefix $\varepsilon$ of $\uu$).

If we now apply the induction hypothesis on $\uu'$ and take into consideration that the application of $\varphi_{z_0}$ to the return words does not change their frequencies, the statement is proved.
\end{proof}

It remains to determine the types of bispecial prefixes of standard Sturmian sequences.
We will use the following matrix formalism.

\begin{dfntn} \label{Rem_BispecialMatrix}
Let $w$ be a prefix of a standard Sturmian sequence $\uu$. 
Let $r, s$ be the return words to $w$ in $\uu$, where $r$ is the most frequent return word.
Then the matrix $P_w$ is defined as:
$$P_w =
\left(\begin{array}{cc}
|r|_0 & |s|_0\\
|r|_1 & |s|_1\\
\end{array} \right)
\mod 2\,. $$
\end{dfntn}

\begin{rmrk}
The type $\mathcal{T}_w$ of the prefix $w$  depends  on the bottom row of the matrix $P_w$. 
$\mathcal{T}_w$ is
\begin{itemize}
\item[i)] $SU \ $ if
$P_w = \left(\begin{array}{cc}
p & q\\
0 & 1\\
\end{array} \right) \ \ $ for some numbers $p, q \in \{0,1\}$;
\item[ii)]$US\ $ if
$P_w = \left(\begin{array}{cc}
p & q\\
1 & 0\\
\end{array} \right) \ \ $ for some numbers $p, q \in \{0,1\}$;
\item[iii)] $UU\ $ if
$P_w = \left(\begin{array}{cc}
p & q\\
1 & 1\\
\end{array} \right) \ \ $ for some numbers $p, q \in \{0,1\}$.
\end{itemize}
\end{rmrk}

\begin{xmpl} \label{Exam_BBetaB2}
Take the prefixes $\epsilon, 0$ and $010$ of Sturmian sequence $\uu$ from Example \ref{Exam_BBetaB1}.
Their matrices are
$$
P_\varepsilon = \left(\begin{array}{cc}
1 & 0\\
0 & 1\\
\end{array} \right)
\,, \quad 
P_0 = \left(\begin{array}{cc}
1 & 1\\
1 & 0\\
\end{array} \right)
\quad \text{and} \quad 
P_{010} = \left(\begin{array}{cc}
0 & 1\\
1 & 1\\
\end{array} \right)\,
$$
and their types are $\mathcal{T}_\varepsilon = SU(1)$, $\mathcal{T}_0 = US(1)$ and  $\mathcal{T}_{010} = UU(2)$ respectively.
\end{xmpl}

\noindent {\bf Convention. }To simplify the notation, for the $n^{th}$ bispecial prefix $w^{(n)}$ of a standard Sturmian sequence we will denote its type $\mathcal{T}^{(n)}$  instead of $\mathcal{T}_{w^{(n)}}$ and its matrix $P^{(n)}$ instead of $P_{w^{(n)}}$.

\begin{bsrvtn} \label{Obs_EpsilonType}
Let $\uu$ be a standard Sturmian sequence with the directive sequence $\zz \in \{b, \beta\}^\N$.
\begin{itemize}
\item[i)] If $\uu$ has a prefix $b^k\beta$ for some positive integer $k$, then by Observation \ref{Obs_Epsilon}, $\uu$ is a concatenation of the blocks $0^k1$ and $0^{k+1}1$.
Therefore, the bispecial prefix $w^{(0)} = \varepsilon$ has the stable return word $r^{(0)} = 0$ and the  unstable return word $s^{(0)} = 1$, $w^{(0)}$ is of type $\mathcal{T}^{(0)} = SU(k)$ and its matrix $P^{(0)}$ is
$$
\left(\begin{array}{cc}
1 & 0\\
0 & 1 \\
\end{array} \right) =: O_b\,.
$$
\item[ii)] If $\uu$ has a prefix $\beta^kb$ for some positive integer $k$, then $w^{(0)} = \varepsilon$ has the unstable return word $r^{(0)} = 1$ and the stable return word $s^{(0)} = 0$, its type is $\mathcal{T}^{(0)} = US(k)$ and its matrix $P^{(0)}$ is
$$
\left(\begin{array}{cc}
0 & 1\\
1 & 0\\
\end{array} \right) =: O_\beta\,.
$$
\end{itemize}
\end{bsrvtn}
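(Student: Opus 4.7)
The plan is to verify (i) directly from the structural information supplied by Observation \ref{Obs_Epsilon}(ii), and then to note that (ii) is obtained by the symmetric argument under the letter-swap $0 \leftrightarrow 1$ (equivalently $b \leftrightarrow \beta$). The argument splits into three small checks: identifying the return words to $\varepsilon$, deciding which of the two is more frequent, and then reading off the stability labels and the matrix.

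First I would recall that the return words to the empty prefix $\varepsilon$ in any sequence are exactly the one-letter words appearing in $\uu$: indeed, every index is an occurrence of $\varepsilon$, so two consecutive occurrences are at positions $i$ and $i+1$, and the corresponding return word is the single letter $u_i$. Since $\uu$ is Sturmian, $\mathcal{R}_\uu(\varepsilon) = \{0,1\}$, so $\{r^{(0)}, s^{(0)}\} = \{0,1\}$; only the labelling $r^{(0)}$ vs.\ $s^{(0)}$ has to be decided.

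For case (i), Observation \ref{Obs_Epsilon}(ii) states that under the hypothesis $b^k\beta$ the sequence $\uu$ is a concatenation of the blocks $0^k 1$ and $0^{k+1} 1$; since each block contains exactly one $1$ and at least $k \geq 1$ zeros, the letter $0$ is strictly more frequent than $1$, forcing $r^{(0)} = 0$ and $s^{(0)} = 1$. Stability is then trivial from the definition: $|0|_1 = 0$ makes $r^{(0)}$ stable and $|1|_1 = 1$ makes $s^{(0)}$ unstable. Plugging these single-letter Parikh vectors modulo $2$ into Definition \ref{Rem_BispecialMatrix} yields precisely the identity matrix $O_b$, and the decomposition of $\uu$ into $r^{(0)\,k}s^{(0)}$ and $r^{(0)\,k+1}s^{(0)}$ matches the requirement of Definition \ref{rsk} with the very same integer $k$, so $\mathcal{T}^{(0)} = SU(k)$ as claimed. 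Case (ii) follows by the same reasoning with the roles of $0$ and $1$ exchanged, producing the matrix $O_\beta$ and type $US(k)$.

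There is essentially no obstacle: the statement is a bookkeeping consequence of Observation \ref{Obs_Epsilon} together with the definitions of return word, stability, and matrix $P_w$. The only point that requires a moment of care is lining up the integer $k$ appearing in the directive-sequence prefix $b^k\beta$ with the integer $k$ appearing in the type $SU(k)$, and this is settled by the explicit concatenation description of $\uu$.
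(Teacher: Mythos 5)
Your proposal is correct and follows exactly the route the paper intends: the observation is stated without a separate proof precisely because it is the bookkeeping you carry out — return words to $\varepsilon$ are the single letters, Observation \ref{Obs_Epsilon} identifies the more frequent one and the block decomposition, and the stability labels and the matrix $P^{(0)}$ then follow directly from the definitions. Nothing is missing.
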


Let us recall that the Parikh vector $V_{\psi(u)}$ can be computed by multiplication $V_{\psi(u)} = M_\psi V_u$ for all $u \in \A^*$.
By Corollary \ref{composition}, the Parikh vectors of the return words $r^{(n)}$  and $s^{(n)}$ can be computed using the matrix of the morphism $\varphi_{z_0z_1 \ldots z_{n-1}}$.
Thus the matrix $P^{(n)}$ can be obtained as a product of the matrix of the morphism $\varphi_{z_0z_1 \ldots z_{n-1}}$ and the matrix $O_b$ or $O_\beta$.
The following proposition summarizes these observations. 

\begin{prpstn} \label{Prop_TypeDetermination}
Let $\uu$ be a standard Sturmian sequence with the directive sequence $\zz \in \{b, \beta\}^\N$ and let  $n \in \N$.
\begin{itemize}
\item[i)] If the sequence $z_{n}z_{n+1}z_{n+2}\cdots$ has a prefix $b^k\beta$, then
$$
P^{(n)} = M_{z_0}M_{z_1} \cdots M_{z_{n-1}}O_b \mod 2\,.
$$
\item[ii)]  If the sequence $z_{n}z_{n+1}z_{n+2}\cdots$ has a prefix $\beta^k b$, then
$$
P^{(n)} = M_{z_0}M_{z_1} \cdots M_{z_{n-1}}O_\beta \mod 2\,.
$$
\end{itemize}
The type $\mathcal{T}^{(n)}$ is given by the bottom row  of matrix $P^{(n)}$ and the number $k$.
\end{prpstn}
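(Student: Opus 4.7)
The plan is to derive the claimed factorizations by direct computation, combining Corollary \ref{composition} (which identifies $r^{(n)}$ and $s^{(n)}$ as images of the letters $0, 1$ under the morphism $\varphi_{z_0 z_1 \cdots z_{n-1}}$) with the matrix action on Parikh vectors $V_{\psi(u)} = M_\psi V_u$. The key observation making the proof one line is that $O_b$ and $O_\beta$ themselves encode Parikh-vector data: $O_b = [\, V_0 \mid V_1 \,]$ is the $2\times 2$ identity, while $O_\beta = [\, V_1 \mid V_0 \,]$ is the letter-swap matrix.

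Iterating $V_{\psi(u)} = M_\psi V_u$ along $\varphi_{z_0 \cdots z_{n-1}} = \varphi_{z_0} \circ \cdots \circ \varphi_{z_{n-1}}$ gives, for each letter $a \in \{0,1\}$,
$$V_{\varphi_{z_0 \cdots z_{n-1}}(a)} = M_{z_0} M_{z_1} \cdots M_{z_{n-1}} V_a.$$
Now I would split on the value of $z_n$. If the suffix $z_n z_{n+1} \cdots$ starts with $b^k \beta$, equivalently $z_n = b$ (such a $k \geq 1$ exists by Observation \ref{Obs_Epsilon}(i)), Corollary \ref{composition}(i) gives $r^{(n)} = \varphi_{z_0 \cdots z_{n-1}}(0)$ and $s^{(n)} = \varphi_{z_0 \cdots z_{n-1}}(1)$. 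Stacking the resulting Parikh vectors as columns and reducing modulo $2$ yields
$$P^{(n)} = [\, V_{r^{(n)}} \mid V_{s^{(n)}} \,] = M_{z_0} \cdots M_{z_{n-1}} [\, V_0 \mid V_1 \,] = M_{z_0} \cdots M_{z_{n-1}} O_b \pmod 2,$$
which is exactly (i). The case $z_n = \beta$ is completely symmetric: Corollary \ref{composition}(ii) swaps the column order of $r^{(n)}, s^{(n)}$, which replaces $O_b$ by $O_\beta$ and gives (ii).

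For the final sentence of the proposition, I would identify the integer $k$ occurring in $\mathcal{T}^{(n)}$ by applying Observation \ref{Obs_Epsilon}(ii)--(iii) to $\uu^{(n)}$, whose directive sequence is precisely $z_n z_{n+1} \cdots$: the prefix $b^k \beta$ forces $\uu^{(n)}$ to decompose uniquely into blocks $0^k 1$ and $0^{k+1} 1$, and since $\dd_\uu(w^{(n)}) = \uu^{(n)}$ under the renaming $0 \mapsto r^{(n)}$, $1 \mapsto s^{(n)}$ (again Corollary \ref{composition}), this $k$ is the same integer appearing in the symbol $SU(k)$, $US(k)$, or $UU(k)$; the $\beta^k b$ case is analogous. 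The choice between $SU$, $US$, and $UU$ is then read directly off the bottom row of $P^{(n)}$, whose entries $|r^{(n)}|_1 \bmod 2$ and $|s^{(n)}|_1 \bmod 2$ are precisely the stability markers of $r^{(n)}$ and $s^{(n)}$, via the remark following Definition \ref{Rem_BispecialMatrix}. I do not see any genuine obstacle here; the whole argument is careful bookkeeping on top of Corollary \ref{composition} and the Parikh-vector identity, the only subtle point being to keep track of which letter is labelled $r^{(n)}$ in each of the two cases.
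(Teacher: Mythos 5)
Your proof is correct and follows essentially the same route as the paper: the paper likewise derives $P^{(n)}$ by combining Corollary \ref{composition} with the Parikh-vector identity $V_{\psi(u)}=M_\psi V_u$, reading $O_b$ and $O_\beta$ as the column matrices $[\,V_0\mid V_1\,]$ and $[\,V_1\mid V_0\,]$ from Observation \ref{Obs_EpsilonType}. Your identification of $k$ via Observation \ref{Obs_Epsilon}(ii)--(iii) applied to $\uu^{(n)}$ is also exactly the intended reading of the final sentence.
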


\begin{xmpl}[Example \ref{Exam_BBetaB3} continued] \label{Exam_BBetaB4}
The $0^{th}$ bispecial prefix of $\uu$ is $w^{(0)} = \varepsilon$.
Since $\zz$ has the prefix $b\beta$, its matrix is $O_b$ and its type is $SU(1)$.

For the bispecial prefix $w^{(1)} = 0$, we have $z_0 = b$ and $z_1z_2z_3 \cdots = \beta b b\beta \cdots$ has the prefix $\beta b$. Thus the corresponding matrix is 
$$
P^{(1)} = M_bO_\beta =
\left(\begin{array}{cc}
1 & 1\\
0 & 1\\
\end{array} \right)
\left(\begin{array}{cc}
0 & 1\\
1 & 0\\
\end{array} \right)
= \left(\begin{array}{cc}
1 & 1\\
1 & 0\\
\end{array} \right) \mod 2\, 
$$
and the $1^{st}$  bispecial prefix $w^{(1)}$ is of type $\mathcal{T}^{(1)} = US(1)$.

For the bispecial prefix $w^{(2)}$ we have $z_0z_1 = b \beta$ and $z_2z_3z_4 \cdots = b b \beta b \cdots$ has the prefix $b^2\beta$.
Therefore its matrix is

$$ P^{(2)} = M_b M_\beta O_b
=
\left(\begin{array}{cc}
1 & 1\\
0 & 1\\
\end{array} \right)
\left(\begin{array}{cc}
1 & 0\\
1 & 1\\
\end{array} \right)
\left(\begin{array}{cc}
1 & 0\\
0 & 1\\
\end{array} \right)
=
\left(\begin{array}{cc}
0 & 1\\
1 & 1\\
\end{array} \right) \mod 2\,
$$
and its type is $\mathcal{T}^{(2)} = UU(2)$.
\end{xmpl}

Finally, we study what kind of matrices can appear among  the matrices $P^{(n)}$ of Sturmian bispecial prefixes.
Clearly, all matrices $M_b, M_\beta, O_b, O_\beta$ have their determinants equal to $1$.
By Proposition \ref{Prop_TypeDetermination}, the matrix $P^{(n)}$ is a  product of these  matrices modulo $2$.
So the determinant of $P^{(n)}$ has to be equal to $1$.
Therefore there are only six candidates for $P^{(n)}$:
\begin{align} \label{Set_matrices}
\left\{
\left(\begin{array}{cc}
1 & 0\\
0 & 1\\
\end{array} \right)\, ,
\left(\begin{array}{cc}
1 & 1\\
0 & 1\\
\end{array} \right)\, , \left(\begin{array}{cc}
1 & 0\\
1 & 1\\
\end{array} \right)\,, \left(\begin{array}{cc}
1 & 1\\
1 & 0\\
\end{array} \right)\,,\left(\begin{array}{cc}
0 & 1\\
1 & 1\\
\end{array} \right)\,,  \left(\begin{array}{cc}
0 & 1\\
1 & 0\\
\end{array} \right)
\right\}\,.
\end{align}

The relations between these matrices are captured in Figure \ref{Fig_Graph}.
We can go through this graph instead of calculating the respective products mod $2$.

\begin{xmpl}
The result of the product
$P^{(2)} = M_b M_\beta O_b$
can be obtained as follows. We start in the vertex
$O_b = \left(\begin{array}{cc}
1 & 0\\
0 & 1\\
\end{array} \right)$.
Then we move along the edge labelled by $\varphi_\beta$ to the vertex
$\left(\begin{array}{cc}
1 & 0\\
1 & 1\\
\end{array} \right)$.
After that we move along the edge labelled by $\varphi_b$ to the vertex
$\left(\begin{array}{cc}
0 & 1\\
1 & 1\\
\end{array} \right)$.
This vertex is the desired matrix~$P^{(2)}$.
\end{xmpl}

\begin{figure}

\includegraphics[scale=0.35]{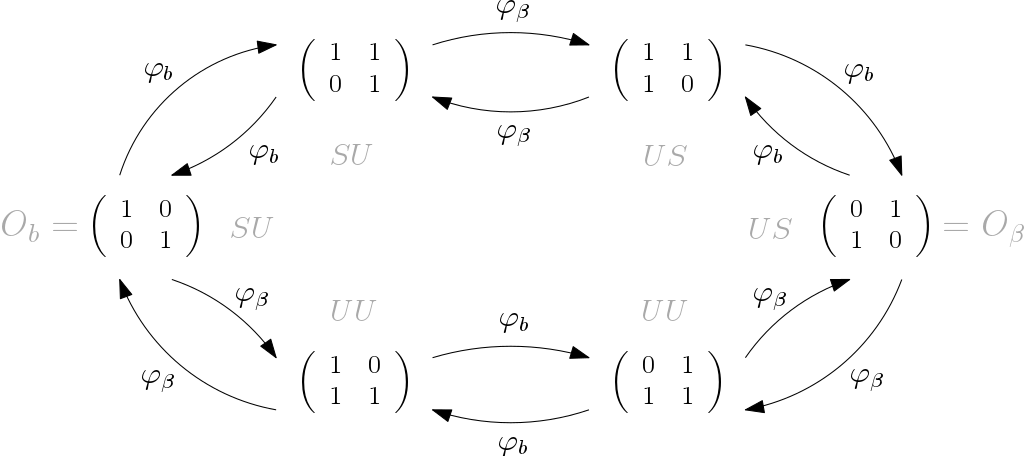}
\caption{The diagram captures the multiplication mod $2$ of matrices with the determinant $1$ by morphism matrices $M_b$ and $M_\beta$.
A directed edge labelled by $\varphi_b$ goes from matrix $M$ to matrix $M'$ if $M' = M_bM \mod 2$.
Analogously for the label $\varphi_\beta$.}  \label{Fig_Graph}
\end{figure}

\begin{rmrk}\label{exceptional}
Any standard Sturmian sequence $\uu$ with the directive sequence $\zz$ has bispecial prefixes of at least two types.
Indeed, by Proposition \ref{Prop_TypeDetermination} (or by the graph in Figure \ref{Fig_Graph}):

-- if $\zz$ has a prefix $b^{\ell}\beta $, $\ell \geq 1$, then 
the types of $w^{(0)}$ and $w^{(\ell)}$ are $SU$ and $US$, respectively;

-- if $\zz$ has a prefix $\beta^{2\ell}b$, $\ell\geq 1$, then the types of $w^{(0)}$, $w^{(1)}$ and $w^{(2\ell)}$ are $US$, $UU$ and $SU$, respectively;

-- if $\zz$ has a prefix $\beta^{2\ell-1}b$, $\ell\geq 1$, then the  types  of $w^{(0)}$ and  $w^{(2\ell-1)}$  are $US$ and $UU$,  respectively.
\medskip

It may happen that these bispecial prefixes are only of two types.
For example, if the directive sequence is $\zz = (\beta bb\beta)^\infty$, then for each $n \in \mathbb{N}$, the bispecial factor $w^{(2n)}$ is  of the type $US(1)$  and  the bispecial factor $w^{(2n+1)}$ is of the type $UU(2)$.
\end{rmrk}

We illustrate our results on the Rote sequence $\gg$ associated with the Fibonacci sequence $\ff$.

\begin{xmpl} \label{Exam_Fib1}
The Fibonacci sequence $\ff$ is fixed by the Fibonacci morphism
$F: 0 \to 01$, $1 \to 0$,  i.e.
$$\ff = 010010100100101001010010010100100101 \cdots \,.$$
Clearly, the sequence $\ff$ is a fixed point of the morphism $F^2$,  too.
Since $F^2 = \varphi_{b \beta}$, the Fibonacci sequence $\ff$ has the directive sequence $\zz = (b\beta)^\infty$, see Observation \ref{Obs_Epsilon}.
By Corollary \ref{composition}, the derivated sequence $\dd_{\ff}(w^{(2n)})$  has the directive sequence $(b\beta)^\infty$ and the derivated sequence $\dd_{\ff}(w^{(2n+1)})$ has the directive sequence $(\beta b)^\infty$.
It means that $\dd_{\ff}(w^{(2n)})$ is the Fibonacci sequence itself and $\dd_{\ff}(w^{(2n+1)})$ can be obtained from the Fibonacci sequence by exchange of letters $0\leftrightarrow 1$.
If we rewrite the derivated sequences into the alphabet $\{r,s\}$, where the most frequent letter is denoted by $r$ and the less frequent letter by $s$ (as required in Definition \ref{rsk}), we obtain only one derivated sequence $\dd$  (i.e. the Fibonacci sequence over the new alphabet):
$$\dd = r srrs  r srrs r r srs r r srs r  r srrs  r srrs r r srs \cdots\,.$$
Therefore, the derivated sequence to any prefix of the Fibonacci sequence is the Fibonacci sequence itself.

The types $\mathcal{T}^{(n)}$ of the bispecial prefixes of the Fibonacci sequence $\ff$ can be determined by Proposition \ref{Prop_TypeDetermination}, where the matrix products can be computed using Figure \ref{Fig_Graph}:

\begin{itemize}
\item $P^{(0)} = O_b =
\left(\begin{array}{cc}
1 & 0\\
0 & 1\\
\end{array} \right) \mod 2$ and $z_0z_1z_2 \cdots = b \beta b \beta \cdots$, thus $w^{(0)}$ has the type $\mathcal{T}^{(0)} = SU(1)$;
\item $P^{(1)} = M_bO_\beta =
\left(\begin{array}{cc}
1 & 1\\
1 & 0\\
\end{array} \right) \mod 2$ and $z_1z_2z_3 \cdots = \beta b \beta b \cdots$, thus $\mathcal{T}^{(1)} = US(1)$;
\item $P^{(2)} = M_b M_\beta O_b =
\left(\begin{array}{cc}
0 & 1\\
1 & 1\\
\end{array} \right) \mod 2$  and $z_2z_3z_4 \cdots = b \beta b \beta \cdots$, thus $\mathcal{T}^{(2)} = UU(1)$;
\item $P^{(3)} = M_b M_\beta M_b O_\beta =
\left(\begin{array}{cc}
1 & 0\\
0 & 1\\
\end{array} \right) \mod 2$ and $z_3z_4z_5 \cdots = \beta b \beta b \cdots$, thus $\mathcal{T}^{(3)} = SU(1)$;
\item $P^{(4)} = M_b M_\beta M_b M_\beta O_b =
\left(\begin{array}{cc}
1 & 1\\
1 & 0\\
\end{array} \right) \mod 2$ and $z_4z_5z_6 \cdots = b \beta b \beta \cdots$, thus $\mathcal{T}^{(4)} = US(1)$;
\item $P^{(5)} = M_b M_\beta M_b M_\beta M_b O_\beta =
\left(\begin{array}{cc}
0 & 1\\
1 & 1\\
\end{array} \right) \mod 2$ and $z_5z_6z_7 \cdots = \beta b \beta b \cdots$, thus $\mathcal{T}^{(5)} = UU(1)$.
\end{itemize}
By simple computations we get
$$M_b M_\beta M_b M_\beta M_b M_\beta =  I \mod 2\,, $$
where $I$ is the identity matrix.
Then we have
$$P^{(6)} = M_b M_\beta M_b M_\beta M_b M_\beta O_b = O_b \mod 2\, .$$
We have also $z_6z_7z_8 \cdots = b \beta b \beta \cdots = z_0z_1z_2 \cdots$. So we can conclude that $w^{(6)}$ has the same type as $w^{(0)}$, which is $SU(1)$. Similarly, $w^{(7)}$ has the same type as $w^{(1)}$ etc.

Now we use Corollary \ref{theSame} to describe the derivated sequences of the Rote sequence 
$$\gg = 001110011100011000110001110011100011 \cdots $$
associated with the Fibonacci sequence $\ff$. 
Since all derivated sequences of $\ff$ are the same and $\ff$ has three distinct types of bispecial prefixes, there are exactly three distinct derivated sequences of $\gg$: $\dd_{\gg}(x^{(0)})$, $\dd_{\gg}(x^{(1)})$ and $\dd_{\gg}(x^{(2)})$.

Finally, we show how to construct these derivated sequences $\dd_{\gg}(x^{(0)})$, $\dd_{\gg}(x^{(1)})$ and $\dd_{\gg}(x^{(2)})$.
Since the type of $x^{(0)}$ is $SU(1)$, the return words $A, B, C$ to the prefix $x^{(0)}$ correspond to the Sturmian factors $r$, $sr^2s$ and $srs$, respectively, where $r, s$ are the return words to $w^{(0)}$, see Theorem \ref{Thm_ReturnWords}.
Thus we have to decompose the sequence $\dd \in \{r, s\}^\N$
$$
\dd = r srrs  r srrs r r srs r r srs r  r srrs  r srrs r r srs \cdots
$$
onto blocks  $ r$, $sr^2s$ and $ srs$. 
The order of these blocks gives us the derivated sequence of $\gg$ to $x^{(0)}$
$$
\dd_{\gg}(x^{(0)}) = AB ABAACAACA AB ABAAC \cdots \,.
$$
The type of $w^{(1)}$ is $US(1)$, so the return words $A, B, C$ to $x^{(1)}$  correspond to the Sturmian factors $rr$, $rsr$, $s$, respectively.
So we decompose $\dd$ onto blocks $ rr$, $rsr$, $s$ and we get
$$\dd_{\gg}(x^{(1)}) = BBCACAC BBCACAC B BBC\cdots\,.$$
The type of $w^{(2)}$ is $UU(1)$, so the return words $A, B, C$ to $x^{(2)}$  correspond to the Sturmian factors $rr$, $rs$, $sr$, respectively.
So we decompose $\dd$ onto blocks $rr$, $rs$, $ sr$ and we get
$$\dd_{\gg}(x^{(2)}) = BACC BACCB BACB BACB B\cdots\,.$$

As explained in Section \ref{Sec_DerivatedSequences}, the derivated sequences $\dd_{\gg}(x^{(0)})$, $\dd_{\gg}(x^{(1)})$ and $\dd_{\gg}(x^{(2)})$ are 3iet sequences.
We can find the parameters of their interval exchange transformations using Proposition \ref{Prop_ThreeExchange}.
The Fibonacci sequence has the slope $\alpha = \frac{1}{\tau}$ and the intercept $\rho = 1- \frac{1}{\tau} = 2- \tau$, where $\tau$ denotes the golden ratio $(1+\sqrt{5}) / 2$.
Thus these derivated sequences are 3iet sequences coding the intercept $2- \tau$ under the three interval exchange transformation $T$ with the parameters $\beta$, $\gamma$ and the permutation $\pi$ as follows:
\begin{itemize}
\item for $\dd_{\gg}(x^{(0)})$ the parameters are $\beta = \frac{1}{\tau}$, $\gamma = \frac{2}{\tau} -1$ and $\pi = (3,2,1)$;
\item for $\dd_{\gg}(x^{(1)})$ the parameters are $\beta = \frac{2}{\tau} -1$, $\gamma = 2- \tau$ and $\pi = (3,2,1)$;
\item for $\dd_{\gg}(x^{(2)})$ the parameters are $\beta = \frac{2}{\tau} - 1$, $\gamma = 2- \tau$ and $\pi = (2,3,1)$.
\end{itemize}
\end{xmpl}

\section{Derivated sequences of substitutive complementary symmetric Rote sequences} \label{S_Substitutive}

The aim of this section is to decide when a Rote sequence associated with a standard Sturmian sequence is primitive substitutive, i.e. it is a morphic image of a fixed point of a primitive morphism.
First we explain why a Rote sequence $\vv$ cannot be purely primitive substitutive, i.e. cannot be fixed by a primitive morphism.

\begin{lmm}\label{notFixed}
Let $\vv$ be a Rote sequence associated with a standard Sturmian sequence $\uu = \S(\vv)$. 
Then $\vv$ is not a fixed point of a primitive morphism.
\end{lmm}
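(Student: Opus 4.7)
Suppose for contradiction that $\vv=\psi(\vv)$ for some primitive morphism $\psi:\{0,1\}^{*}\to\{0,1\}^{*}$.

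\textbf{Step 1.} Since $v_{0}=0$, the word $\psi(0)$ starts with $0$ and is a non-empty prefix of $\vv$. Rote's interpretation of $\vv$ as a coding of a rotation by an irrational angle with the partition of the unit circle into two half-arcs gives that both letters occur in $\vv$ with frequency $\tfrac12$. Hence $(\tfrac12,\tfrac12)^{T}$ is the normalized left Perron eigenvector of $M_{\psi}$, so the two columns of $M_{\psi}$ share a common sum equal to the Perron eigenvalue $\lambda$. Because $2\lambda=|\psi(0)|+|\psi(1)|\in\mathbb{N}$ and $\lambda$ is an algebraic integer, $\lambda$ is a positive integer and, in particular, $|\psi(0)|+|\psi(1)|$ is even.

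\textbf{Step 2.} The $\psi$-structure on $\vv$ lifts to the pair presentation $\vv'=(v_{0}v_{1})(v_{1}v_{2})(v_{2}v_{3})\cdots$ over the alphabet $\{00,01,10,11\}$, making $\vv'$ a fixed point of an induced primitive morphism. The Sturmian associate $\uu=\S(\vv)$ is the letter projection of $\vv'$ by the map $ab\mapsto a+b\bmod 2$, so $\uu$ is primitive substitutive. By Observation \ref{Obs_Epsilon}(iv), the directive sequence $\zz$ of $\uu$ is eventually periodic; exploiting the frequency-$\tfrac12$ constraint from Step 1 (which forces a symmetry between the parity statistics of $\vv$ and of $\bar{\vv}$), one can upgrade this to $\zz$ being purely periodic: $\zz=z^{\infty}$ for some $z\in\{b,\beta\}^{+}$ containing both letters, so that $\uu$ is a fixed point of the primitive Sturmian morphism $\varphi_{z}^{N}$ for a suitable $N\ge 1$.

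\textbf{Step 3.} It then remains to show that no primitive Sturmian morphism $\varphi_{z}^{N}$ can be lifted through $\S$ to a primitive morphism on $\{0,1\}^{*}$ that fixes $\vv$ (as opposed to sending it to $\bar{\vv}$). I would run a finite case analysis, using Proposition \ref{Prop_TypeDetermination} and the matrix graph of Figure \ref{Fig_Graph}, on the six possible values of $M_{\varphi_{z}}^{N}\bmod 2$ listed in \eqref{Set_matrices}. The lift is unique once one fixes $v_{0}=0$, and whether it sends $\vv$ to $\vv$ or to $\bar{\vv}$ is controlled by the Parikh vector $V_{\varphi_{z}^{N}(0)}\bmod 2$ together with the identity $v_{n}=|\uu_{[0,n)}|_{1}\bmod 2$ used in the proof of Lemma \ref{occurrences}. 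A case-by-case check shows that in every scenario compatible with primitivity (i.e.\ $z$ containing both $b$ and $\beta$), the candidate lift sends $\vv$ to $\bar{\vv}$, which contradicts $\psi(\vv)=\vv$.

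The main obstacle is Step 2, namely the promotion of ``eventually periodic'' to ``purely periodic'' directive sequence, and the parity bookkeeping in Step 3. A cleaner alternative route invokes Moss\'e's recognizability theorem: the decomposition $\vv=\psi(v_{0})\psi(v_{1})\cdots$ is unique, and the return words to the prefix $\psi(0)$ in $\vv$ are exactly the $\psi$-images of the three return words to the letter $0$ (three by Theorem \ref{Thm_ReturnWordsNumber}). This yields a primitive ternary morphism $\widehat{\psi}:\{A,B,C\}^{*}\to\{A,B,C\}^{*}$ fixing the derivated sequence $\dd_{\vv}(0)$; comparing this with Proposition \ref{Prop_ThreeExchange}, which pins down the 3iet parameters of $\dd_{\vv}(0)$ as explicit affine functions of the slope $\alpha$ of $\uu$, produces the incompatibility that delivers the contradiction.
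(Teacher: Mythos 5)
Your Step 1 is exactly the paper's opening move: since the language of $\vv$ is closed under exchange of letters, the frequency vector is $(\tfrac12,\tfrac12)^\top$, and a rational eigenvector of an integer matrix forces the Perron eigenvalue $\Lambda$ of $M_\psi$ (and then its algebraic conjugate $\Lambda'$) to be an integer. After that, however, your argument has genuine gaps. In Step 2 the promotion of ``eventually periodic directive sequence'' to ``purely periodic'' is not established --- you acknowledge this yourself --- and it is not actually needed for the statement. Step 3 is worse: it rests on the unproven structural assumption that the hypothetical $\psi$ fixing $\vv$ must arise as a lift through $\S$ of a power of a standard Sturmian morphism $\varphi_z$, and the announced case-by-case parity check (that every such lift sends $\vv$ to $\bar{\vv}$) is never carried out. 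As written, Steps 2--3 do not constitute a proof.

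Your ``cleaner alternative'' is much closer to what the paper actually does --- pass to a derivated sequence $\dd_\vv(x)$, which by Proposition \ref{Prop_ThreeExchange} is a 3iet sequence whose letter frequencies are irrational (being affine in the irrational slope $\alpha$) --- but it stops exactly where the decisive ingredient is needed. Knowing that $\dd_\vv(x)$ is fixed by a primitive ternary morphism $\widehat{\psi}$ and that its 3iet parameters are irrational is not in itself a contradiction: plenty of primitive morphisms fix 3iet sequences with irrational interval lengths (their dominant eigenvalues are simply irrational). The missing link is the spectral theorem of Durand \cite{Dur08} invoked in the paper: if $\vv$ is fixed by $\psi$, then any of its derivated sequences is fixed by some morphism $\xi$ whose eigenvalues either lie in the spectrum of $M_\psi$ or have modulus in $\{0,1\}$. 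Combining this with Step 1 (the spectrum of $M_\psi$ consists of two integers) forces the dominant eigenvalue of $M_{\widehat{\psi}}$ to be a rational integer, while the irrationality of the 3iet frequencies forces it to be irrational --- that clash is the contradiction. Without citing (or reproving) that eigenvalue-inheritance result, your ``incompatibility'' is only asserted, not derived.
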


\begin{proof}
Let us assume that $\vv$ is fixed by a primitive  morphism $\varphi$. 
Then the vector of letter frequencies $(\rho_0, \rho_1)^\top$  is an eigenvector to the dominant eigenvalue $\Lambda$ of the matrix $M_\varphi\in \mathbb{N}^{2\times 2}$. 
As the language of $\vv$ is closed under the exchange of letters $0\leftrightarrow1$, the vector of frequencies is $(\rho_0, \rho_1)^\top = (\tfrac12, \tfrac12)^\top$, see \cite{Que87}. 
Since all entries of the primitive matrix $M_\varphi$  are integer, an eigenvalue to a rational eigenvector is an integer number.
Moreover, by the Perron-Frobenius theorem the dominant eigenvalue of any primitive matrix with entries in $ \mathbb{N}$  is bigger then $1$, i.e. $\Lambda >1$.
The second eigenvalue $\Lambda'$ (i.e. the other zero of the quadratic characteristic polynomial of $M_\varphi$) is integer, too.

Let $x$ be a prefix of $\vv$ and $\dd_{\vv}(x)$ be the derivated sequence of $\vv$ to the prefix $x$.  
Let us assume that $\dd_{\vv}(x)$ is fixed by a primitive morphism $\psi$.
By Proposition \ref{Prop_ThreeExchange}, the derivated sequence $\dd_{\vv}(x)$ is a ternary sequence coding a three interval exchange transformation. 
The letter frequencies in any 3iet sequence are given by the lengths of the corresponding subintervals.
The lengths of three subintervals described in Proposition \ref{Prop_ThreeExchange} are irrational as the slope  $\alpha$   of a  Sturmian sequence is irrational. Therefore, the vector of frequencies  and consequently the dominant eigenvalue of the matrix $M_{\psi}$  is irrational as well.

For a sequence fixed by a primitive morphism $\eta$, Durand in \cite{Dur08} proved  that any its derivated sequence is fixed by some morphism, say $\xi$, and each  eigenvalue $\lambda$ of  $M_\xi$ either belongs to the spectrum of $M_\eta$ or its modulus $|\lambda|$ belongs to $\{0,1\}$.

Applying this result to the morphisms $\varphi$ fixing the Rote sequence $\vv$ and  the morphism $\psi$ fixing its derivated sequence $\dd_{\vv}(x)$ we get that the spectrum of $M_{\psi}$ is a subset of $\{\Lambda, \Lambda', 0\}\cup\{y\in \mathbb{C} :|y| = 1\}$. 
Thus the dominant eigenvalue of the matrix  $M_\psi$  (which is by the Perron-Frobenius theorem bigger than 1)  cannot be irrational. This is a contradiction.
\end{proof}

Despite the previous lemma, we will show in Theorem \ref{Substitutive} that a Rote sequence is primitive substitutive whenever the associated Sturmian sequence is primitive substitutive. 
For the proof we need to study the periodicity of the types of bispecial prefixes in Sturmian sequences.

\begin{prpstn} \label{Prop_OtherOccurrence}
Let $\uu$ be a standard Sturmian sequence with an  eventually periodic directive sequence $\zz$ with a period $Q$. 
Then there exists $q \in \{1, 2 ,3\}$ such that the sequence $\bigl(P^{(n)}\bigr)_{n\in \mathbb{N}}$ is eventually periodic with a period $qQ$.

\end{prpstn}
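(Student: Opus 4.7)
My plan is to express each $P^{(n)}$ as an explicit matrix product modulo $2$ and then exploit the fact that the relevant matrices live inside a finite group of order~$6$. Since $M_b$ and $M_\beta$ both have determinant $1$ in $\mathbb{Z}/2\mathbb{Z}$, every product of them lies inside the six-element set displayed in \eqref{Set_matrices}, which forms the group $SL_2(\mathbb{Z}/2\mathbb{Z}) \cong S_3$. In particular, every element of this group has order $1$, $2$ or $3$, and this is where the bound $q\leq 3$ of the statement ultimately comes from.

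Concretely, I would fix $N\in\N$ such that $z_{n+Q}=z_n$ for all $n\geq N$, and set $A_n=M_{z_0}M_{z_1}\cdots M_{z_{n-1}}\mod 2$ (with $A_0$ the identity matrix). By Observation~\ref{Obs_Epsilon} the suffix $z_nz_{n+1}\cdots$ begins with a block of the form $b^k\beta$ or $\beta^k b$; let $*(n)\in\{b,\beta\}$ denote its terminal letter. Proposition~\ref{Prop_TypeDetermination} then reads
$$P^{(n)}=A_n\,O_{*(n)}\mod 2,$$
and the $Q$-periodicity of $\zz$ immediately gives $*(n+Q)=*(n)$ for all $n\geq N$. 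So the entire problem is reduced to showing that the prefactor $A_n$ is eventually periodic with a period of the form $qQ$ for some $q\in\{1,2,3\}$.

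To control $A_n$, I would introduce the one-period products $B_n=M_{z_n}M_{z_{n+1}}\cdots M_{z_{n+Q-1}}\mod 2$ for $n\geq N$. A direct telescoping using $z_{n+Q}=z_n$ yields the conjugation identity $B_{n+1}=M_{z_n}^{-1}B_n M_{z_n}$, so all the matrices $B_n$ are conjugate in the group and share a common order $q$; by the observation of the first paragraph, $q\in\{1,2,3\}$. Moreover, $Q$-periodicity also gives $B_{n+jQ}=B_n$ for every $j\geq 0$, so a short induction produces $A_{n+kQ}=A_n\,B_n^k\mod 2$ for all $k\geq 0$. Choosing $k=q$ yields $A_{n+qQ}=A_n$, and together with $O_{*(n+qQ)}=O_{*(n)}$ this gives the desired equality $P^{(n+qQ)}=P^{(n)}$ for every $n\geq N$.

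The only real subtlety is the recognition step: once one sees that the six matrices of \eqref{Set_matrices} are exactly the elements of $SL_2(\mathbb{Z}/2\mathbb{Z})\cong S_3$, and that this group is non-cyclic of order $6$ so no element has order $6$, the bound $q\leq 3$ is automatic. The rest is organised bookkeeping on the directive sequence $\zz$ and on the definition of $*(n)$, together with the telescoping conjugation that makes all the cyclic shifts $B_n$ isospectral.
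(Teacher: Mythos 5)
Your proof is correct and follows essentially the same route as the paper: both reduce the claim to the fact that the one-period product of the matrices $M_{z_i}$ lies in $SL_2(\mathbb{Z}/2\mathbb{Z})\cong S_3$ and therefore has order $q\in\{1,2,3\}$, whence $A_{n+qQ}=A_n$ beyond the preperiod; your conjugation identity for the shifted products $B_n$ is just a slightly more systematic packaging of the paper's direct computation with the single matrix $H=B_p$. One small slip worth noting: in Proposition \ref{Prop_TypeDetermination} the matrix $O_{*(n)}$ is indexed by the \emph{initial} letter of the monochromatic block (i.e.\ $O_b$ when the suffix starts with $b^k\beta$), not its terminal letter, though this does not affect your argument, since either choice is a function of the suffix $z_nz_{n+1}\cdots$ and is therefore $qQ$-periodic in $n$.
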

\begin{proof} Let $p$ be a preperiod of the directive sequence  $\zz= z_0z_1z_2 \cdots$.  
We denote
$$H= M_{z_{p}} M_{z_{p+1}}  \cdots M_{ z_{p + Q-1}} \mod 2\,.$$   
The matrix $H$ belongs to the set of matrices displayed in \eqref{Set_matrices}.
One can easily verify that
\begin{itemize}
\item[i)] if $H \in \left\{
\left(\begin{array}{cc}
1 & 1\\
0 & 1\\
\end{array} \right),
\left(\begin{array}{cc}
1 & 0\\
1 & 1\\
\end{array} \right),
\left(\begin{array}{cc}
0 & 1\\
1 & 0\\
\end{array} \right)
\right\}$, then $H^2 = I \mod 2$;
\item[ii)] if $H \in \left\{
\left(\begin{array}{cc}
0 & 1\\
1 & 1\\
\end{array} \right),
 \left(\begin{array}{cc}
1 & 1\\
1 & 0\\
\end{array} \right)
\right\}$, then $H^3 = I \mod 2$.
\end{itemize}
Let $q$ be the smallest positive integer such that $H^q=I \mod 2$, obviously $q \in \{1,2,3\}$.
To conclude the proof we show that the sequence  $\bigl(P^{(n)}\bigr)_{n\in \mathbb{N}}$ has a preperiod $p$ and a period $qQ$. 
Let $n\geq p$ and $m=n+qQ$.
By Proposition \ref{Prop_TypeDetermination} and the fact that $z_i =z_{i+qQ}$ for any $i\geq p$ we can write
\begin{align*}
P^{(n)} &= M_{z_0}\cdots M_{z_{n-1}} O_{z_n}=\Bigl(M_{z_0}\cdots M_{z_{p-1}}\Bigr)\Bigl(M_{z_{p}}\cdots M_{z_{n-1}}\Bigr)O_{z_n} \,,\\
P^{(n+qQ)} = P^{(m)} &=M_{z_0}\cdots M_{z_{m-1}} O_{z_m} = \Bigl( M_{z_0} \cdots M_{z_{p-1}}\Bigr)\  H^q \ \Bigl(M_{z_{p}} \cdots M_{z_{n -1}}\bigr) O_{z_n} = P^{(n)} \,.
\end{align*}
\end{proof}

\begin{thrm} \label{Substitutive}
Let $\vv$ be a Rote sequence associated with a standard Sturmian sequence $\uu = \S(\vv) $. 
Then the Rote sequence $\vv$ is primitive substitutive if and only if the Sturmian sequence $\uu$ is primitive substitutive.
\end{thrm}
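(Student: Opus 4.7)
The approach is to apply Durand's Theorem \ref{Durand}, which reduces primitive substitutivity to finiteness of the set of derivated sequences. The plan is to show that $\mathrm{Der}(\uu)$ is finite if and only if $\mathrm{Der}(\vv)$ is finite, with Corollary \ref{theSame} acting as the bridge.

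For the implication ``$\vv$ primitive substitutive $\Rightarrow$ $\uu$ primitive substitutive'', every non-empty prefix $w$ of $\uu$ equals $\S(x)$ for the unique non-empty prefix $x$ of $\vv$ of length $|w|+1$ starting with $0$. By Corollary \ref{theSame}, the equality $\dd_\vv(x) = \dd_\vv(x')$ implies $\dd_\uu(\S(x)) = \dd_\uu(\S(x'))$, so the assignment $\dd_\vv(x) \mapsto \dd_\uu(\S(x))$ is a well-defined surjection from $\mathrm{Der}(\vv)$ onto $\mathrm{Der}(\uu)$. Hence $\#\mathrm{Der}(\uu) \leq \#\mathrm{Der}(\vv)$, and one invocation of Durand's theorem finishes this direction.

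For the converse, assume $\uu$ is primitive substitutive. By Observation \ref{Obs_Epsilon}(iv) its directive sequence $\zz$ is eventually periodic, and by Durand the set $\mathrm{Der}(\uu)$ is finite. The plan is to show that the set of types $\{\mathcal{T}^{(n)} : n \in \N\}$ of bispecial prefixes of $\uu$ is also finite; then Corollary \ref{theSame} gives
\[
\#\mathrm{Der}(\vv) \leq \#\mathrm{Der}(\uu)\cdot\#\{\mathcal{T}^{(n)} : n \in \N\} < \infty,
\]
and a final application of Durand yields that $\vv$ is primitive substitutive (the reduction to bispecial prefixes of $\vv$ is justified by \eqref{onlyRight} together with Corollary \ref{Coro_Bispecials}). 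To bound the types, I split $\mathcal{T}^{(n)} \in \{SU(k), US(k), UU(k)\}$ into the letter-pattern part and the integer $k$. The letter pattern is governed by the matrix $P^{(n)}$, which takes finitely many values in the list \eqref{Set_matrices} because $\bigl(P^{(n)}\bigr)_{n\in \N}$ is eventually periodic by Proposition \ref{Prop_OtherOccurrence}. The integer $k$, by Observation \ref{Obs_Epsilon}(ii)(iii) applied to the directive sequence $\zz^{(n)} = z_n z_{n+1}\cdots$ of the derivated sequence $\dd_\uu(w^{(n)})$ (see Corollary \ref{composition}), equals the length of the initial maximal monochromatic run in $\zz^{(n)}$; this length is bounded uniformly in $n$ because $\zz$ is eventually periodic.

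The main subtlety lies in the converse direction: the type $\mathcal{T}^{(n)}$ is a pair consisting of a matrix-valued datum (controlled by Proposition \ref{Prop_OtherOccurrence}) and an integer $k$, and one must bound the latter separately from the former using eventual periodicity of $\zz$. Everything else is routine bookkeeping. Note also that, in view of Lemma \ref{notFixed}, primitive substitutivity is the strongest conclusion one can hope for: even when $\uu$ is a fixed point of a primitive morphism, $\vv$ is only a morphic image of such a fixed point.
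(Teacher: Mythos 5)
Your proof is correct and follows essentially the same route as the paper: both directions reduce to Durand's Theorem \ref{Durand} via the bijection-type statement of Corollary \ref{theSame}, with the converse direction controlled by the pair $\bigl(\mathcal{T}^{(n)},\dd_\uu(w^{(n)})\bigr)$. The only (harmless) difference is that the paper establishes eventual periodicity of this sequence of pairs with period $qQ$ (which it then reuses for the quantitative bound in Corollary \ref{Thm_Number}), whereas you settle for mere finiteness by bounding the matrix part and the integer $k$ separately.
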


\begin{proof}  
The proof is based on two results:

-- A standard Sturmian sequence is primitive substitutive if and only if its directive sequence $\zz \in \{b, \beta\}^\mathbb{N}$ is  eventually periodic, see Observation \ref{Obs_Epsilon}.

-- A sequence is primitive substitutive if and only if it has finitely many derivated sequences,  see Theorem \ref{Durand}.
\medskip

Let us assume that $\uu$ is primitive substitutive and $Q$ is a  period of  its directive sequence $\zz = z_0z_1z_2\cdots$.
By Corollary \ref{composition}, the sequence $\bigl(\dd_\uu(w^{(n)})\bigr)_{n\in \mathbb{N}}$  is eventually periodic with the  same  period $Q$.

Remind that the type $\mathcal{T}^{(n)}$  of a bispecial  prefix $ w^{(n)}$ is determined by the bottom row of the matrix  $P^{(n)}$ and by the length of the maximal monochromatic prefix of the sequence $z_nz_{n+1}z_{n+2}\cdots$.  
By Proposition \ref{Prop_OtherOccurrence}, the sequence $\bigl(\mathcal{T}^{(n)}\bigr)_{n\in \mathbb{N}}$ is eventually periodic with the period $qQ$.

It implies that the sequence of pairs  $\bigl(\mathcal{T}^{(n)},\dd_\uu(w^{(n)})\bigr)_{n\in \mathbb{N}}$ is eventually periodic with the period $qQ$, too.
Then by Corollary \ref{theSame}, the Rote sequence $\vv$ has only finitely many derivated sequences and thus the sequence $\vv$ is primitive substitutive.

On the other hand, if  $\vv$ is primitive substitutive, $\vv$ has only finitely many derivated sequences. 
Then by Corollary \ref{theSame} the associated Sturmian sequence $\uu$ has only finitely many derivated sequences and thus $\uu$ is primitive substitutive.
\end{proof}

\begin{crllr} \label{Thm_Number}
Let $\vv$ be a Rote sequence associated with a standard Sturmian sequence $\uu = \S(\vv) $ fixed by a primitive morphism $\varphi_z$, where $z\in \{b, \beta\}^+$. 
Then $\vv$ has at most $3 |z|$ distinct derivated sequences to its non-empty prefixes and each of them is fixed by a primitive morphism over a ternary alphabet.
\end{crllr}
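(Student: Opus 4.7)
The plan is first to bound the number of distinct derivated sequences of $\vv$ by extracting purely periodic behaviour of the pairs $\bigl(\mathcal{T}^{(n)}, \dd_\uu(w^{(n)})\bigr)_n$, and then to exploit this periodicity to realise each derivated sequence as a fixed point of a primitive morphism.

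For the counting, Observation \ref{Obs_Epsilon}(iv) forces the directive sequence of $\uu$ to be purely periodic, $\zz = z^\infty$, with period $Q = |z|$. Corollary \ref{composition} identifies $\dd_\uu(w^{(n)})$ as the standard Sturmian sequence with directive sequence $\zz^{(n)} = z_n z_{n+1}\cdots$, so $\bigl(\dd_\uu(w^{(n)})\bigr)_n$ has period $Q$. Reusing the argument in the proof of Proposition \ref{Prop_OtherOccurrence} with preperiod $p=0$ shows that $\bigl(P^{(n)}\bigr)_n$ is purely periodic of period $qQ$ for some $q\in\{1,2,3\}$, and since $\mathcal{T}^{(n)}$ is determined by the bottom row of $P^{(n)}$ together with the monochromatic-prefix length of $\zz^{(n)}$, the pair $\bigl(\mathcal{T}^{(n)}, \dd_\uu(w^{(n)})\bigr)_n$ is purely periodic of period dividing $qQ$. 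By Corollary \ref{theSame} this transfers to the sequence $\bigl(\dd_\vv(x^{(n)})\bigr)_n$, where $x^{(n)}$ denotes the $n$-th bispecial prefix of $\vv$ (ordered by length). A non-empty right-special prefix of $\vv$ is necessarily bispecial: its $\S$-image is a prefix of the standard Sturmian $\uu$ and therefore left-special, so it is right-special iff bispecial, which via Corollary \ref{Coro_Bispecials} lifts back to a bispecial prefix of $\vv$. Applying \eqref{onlyRight} to $\vv$ therefore caps the number of derivated sequences of $\vv$ to non-empty prefixes at $qQ \leq 3|z|$.

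For the fixing morphisms, set $m = qQ$. The pure periodicity gives $\dd_\vv(x^{(n)}) = \dd_\vv(x^{(n+km)})$ for every $n$ and every $k \geq 1$. Since $x^{(n)}$ is a prefix of $x^{(n+km)}$, the longer factor is encoded, under the recoding of $\vv$ by return words to $x^{(n)}$, as a unique prefix $y_{n,k}$ of $\dd_\vv(x^{(n)})$, and the standard composition of derivation yields $\dd_\vv(x^{(n+km)}) = \dd_{\dd_\vv(x^{(n)})}(y_{n,k})$. Together with the periodicity this gives
$$
\dd_\vv(x^{(n)}) \;=\; \dd_{\dd_\vv(x^{(n)})}(y_{n,k})\,,
$$
which is exactly the assertion that $\dd_\vv(x^{(n)})$ is a fixed point of the morphism $\psi_{n,k}$ on $\{A,B,C\}$ sending each letter to the return word to $y_{n,k}$ it represents.

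It remains to check primitivity of $\psi_{n,k}$ for $k$ large enough. The sequence $\dd_\vv(x^{(n)})$ is a uniformly recurrent aperiodic 3iet sequence (Proposition \ref{Prop_ThreeExchange} and the i.d.o.c.\ remark following it), so there is a constant $G$ such that every factor of length exceeding $G$ contains each of $A, B, C$. The images $\psi_{n,k}(A), \psi_{n,k}(B), \psi_{n,k}(C)$ are the return words to $y_{n,k}$ in $\dd_\vv(x^{(n)})$; through the recoding they count the number of $x^{(n)}$-return-words composing the successive return words to $x^{(n+km)}$ in $\vv$, and these grow without bound as $k \to \infty$ by the Sturmian/Rote structure combined with the primitivity of $\varphi_z$. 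Choosing $k$ so that each $|\psi_{n,k}(a)|$ exceeds $G$ makes the matrix $M_{\psi_{n,k}}$ strictly positive, and $\psi_{n,k}$ is primitive. The main obstacle in the plan is this final step: the quantitative control on the lengths of return words under the recoding that is needed to push all three images above the threshold $G$.
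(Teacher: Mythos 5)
Your proof is correct, and the first half (the counting) is essentially identical to the paper's: pure periodicity of $\bigl(\mathcal{T}^{(n)},\dd_\uu(w^{(n)})\bigr)_n$ with period $qQ\leq 3|z|$, transferred to $\bigl(\dd_\vv(x^{(n)})\bigr)_n$ via Corollary \ref{theSame}; your extra remark that non-empty right-special prefixes of $\vv$ are automatically bispecial is a harmless (and correct) elaboration of the reduction \eqref{onlyRight}. Where you diverge is the second half. The paper disposes of it in one sentence by citing Durand's theorem that a sequence which is the derivated sequence to two distinct prefixes is fixed by a morphism; you instead re-derive this in situ via the composition identity $\dd_\vv(x^{(n+km)}) = \dd_{\dd_\vv(x^{(n)})}(y_{n,k})$ and, importantly, you actually prove primitivity, which the paper's proof never addresses explicitly (its citation only yields ``fixed by some morphism''). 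Your primitivity argument is sound: the point you flag as the main obstacle --- forcing all three images $\psi_{n,k}(A),\psi_{n,k}(B),\psi_{n,k}(C)$ past the uniform-recurrence threshold $G$ --- is settled by the observation that in an aperiodic sequence the lengths of return words to prefixes tend to infinity (a return word to $w$ of length at most $|w|$ would make $w$ periodic with that period), so the number of $x^{(n)}$-blocks in each return word to $x^{(n+km)}$ is unbounded in $k$. The trade-off is clear: the paper's route is shorter but leans on an external result and leaves primitivity implicit; yours is self-contained and makes the ternary fixing morphisms, and their primitivity, fully explicit --- essentially anticipating the ``Durand's construction of fixing morphisms'' subsection and Algorithm \ref{Algo_FixingMorphism} that the paper develops afterwards.
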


\begin{proof}
Using the notation from the proofs of Proposition \ref{Prop_OtherOccurrence} and Theorem \ref{Substitutive}, the sequence of pairs  $\bigl(\mathcal{T}^{(n)},\dd_\uu(w^{(n)})\bigr)_{n\in \mathbb{N}}$ has a preperiod $p$ and a period $qQ$, where $q\in\{1,2,3\}$. 
Since now the directive sequence $\zz = z^{\infty}$ is purely periodic, we can choose $p=0$ and $Q=|z|$.
Each pair $\bigl(\mathcal{T}^{(n)},\dd_\uu(w^{(n)})\bigr)$ uniquely determines a derivated sequence in $\vv$, and thus there are at most $qQ \leq 3|z|$ distinct derivated sequences to non-empty prefixes of $\vv$.

For the second part of the statement, it suffices to apply Durand's result from \cite{Dur98}: if a sequence $\dd$  is the derivated sequence to two distinct prefixes, then $\dd$ is a fixed point of some morphism.
\end{proof}

Let us stress  that the previous corollary does not speak about the derivated sequence to the prefix $\varepsilon$.  
In this case the derivated sequence $\dd_{\vv}(\varepsilon)$ is the sequence $\vv$ itself and by Lemma \ref{notFixed} it is not a fixed point of any primitive morphism.

\begin{rmrk}\label{smallerPeriod}  
The number of derivated sequences of a Rote sequence $\vv$ may be smaller than the value $3|z|$ announced in Corollary \ref{Thm_Number} and also smaller than the value $qQ$ found in the proof.  
There are two reasons which may diminish the number:

i) The period  $Q$  of the sequence $\bigl(\dd_\uu(w^{(n)})\bigr)_{n\in \mathbb{N}}$ comes from the period $Q$ of the directive sequence $\zz = z^{\infty}$, where the word $z \in \{b,\beta\}^+$  describes the Sturmian morphism $\varphi_z$. 
If the word $z$  is not primitive, i.e. $z = y^m$ for some $y \in \{b,\beta\}^+$ and $m \in \mathbb{N}, m\geq 2$, we can replace $|z|$ by the smaller number $|y|$. 
But even if $z$  is primitive, the minimal period of $\bigl(\dd_\uu(w^{(n)})\bigr)_{n\in \mathbb{N}}$ may be smaller. It happens for example in the Fibonacci case, where we consider the morphism $\varphi_{b\beta}$, i.e. $z=b\beta$, see Example \ref{Exam_Fib1}.

ii) The sequence of matrices $\bigl(P^{(n)}\bigr)_{n\in \mathbb{N}}$ has the guaranteed period $qQ$. 
But since the type $\mathcal{T}^{(n)}$ is determined only by the bottom row of the matrix $P^{(n)}$, it may also happen that $\mathcal{T}^{(m)} = \mathcal{T}^{(n)}$ for a pair $n, m \in \N$, $n < m < n+ qQ$.  
\end{rmrk}

By the proof of Corollary \ref{Thm_Number}, if two distinct prefixes of $\vv$ has the same derivated sequence, then this common derivated sequence is fixed by some morphism.  
Durand in \cite{Dur98} provided a construction of this fixing morphism.

\subsection*{Durand's construction of fixing morphisms}

Here we remind the construction only for the case when each non-empty prefix $x$ of the sequence $\vv$ has exactly three return words in $\vv$.
We assume:
\begin{itemize}
\item[--] $x$ and $x'$ are prefixes of a sequence $\vv \in \mathcal{A}^\mathbb{N}$ such that $|x| <|x'|$;
\item[--] $A, B,C\in \mathcal{A}^+$  are the  return words to $x$  and $A',B', C' \in \mathcal{A}^+$ are the return words to $x'$;
\item[--] the derivated sequences $\dd_{\vv}(x)$ over $\{A,B,C\}$ and  $\dd_{\vv}(x')$ over $\{A',B',C'\}$ satisfy
$$\dd_{\vv}(x')=\pi\bigl( \dd_{\vv}(x)\bigr)  \,,\ \ \ \text{where} \  \pi \ \text{ is the projection $A\to A',  B \to B', C\to C'$}.$$
\end{itemize}
Since  $x$ is a prefix of $x'$, the words $A',B', C'$ are concatenations of the words $A, B, C$ and we can write $A', B', C' \in \{A,B,C\}^+$.  
Thus one can find  the words $w_A, w_B,  w_C \in \{A,B,C\}^+ $ such that  $A' = w_A$, $B' = w_B$ and $C'=w_C$. 
Then the derivated sequence $\dd_{\vv}(x)$  is fixed by the  morphism $\sigma : \{A,B,C\}^* \to  \{A,B,C\}^*$  defined by
$$\sigma(A) = w_A, \ \ \sigma(B) = w_B, \ \ \sigma(C) = w_C\,.$$
\medskip

If $\vv$ is a Rote sequence associated with a standard Sturmian sequence $\uu$, the Durand's construction can be transformed into the manipulation with the factors of the Sturmian sequence $\uu$ instead of factors of the Rote sequence $\vv$.

Let $x$, $x'$ be two non-empty bispecial prefixes of $\vv$ with the same derivated sequence, i.e. $\dd_{\vv}(x) = \dd_{\vv}(x')$, and let $|x| < |x'|$.    
By Corollary \ref{Coro_Bispecials}, $w:= \S(x)$ and $w':=\S(x')$ are  bispecial  prefixes of $\uu$. 
By Corollary \ref{theSame}, $w$ and $w'$ have the same type and the same derivated  sequence of $\uu$.
Denote by $r,s$ the most frequent and the less frequent return word to $w$ in $\uu$ and analogously denote the return words $r',s'$ to the prefix $w'$.

Let  $A,B,C$  and $A', B',C'$  be the return words to   $x$ and $x'$  in $\vv$, respectively.  
Put
 $$a:=\S(A0)\,,  \ \ b:=\S(B0)\,, \ \ c:=\S(C0) \qquad  \text{and} \qquad a':=\S(A'0)\,,  \ \ b':=\S(B'0)\,,  \ \  c':=\S(C'0) \,.$$
Theorem \ref{Thm_ReturnWords} implies $a,b,c\in \{r,s\}^+$ and  $a',b',c'\in \{r',s'\}^+$. 
Recall that $A',B', C' \in \{A,B,C\}^+$ and as the types of bispecial prefixes $w$ and $w'$ are the same, necessarily  $a',b',c' \in\{a,b,c\}^+ $ as well. 
Therefore we can find the words $w_a,w_b,  w_c \in \{a,b,c\}^+$ such that $a'= w_a,  b'= w_b,  c'= w_c$. 
Then the desired morphism $\sigma$ fixing the derivated sequence $\dd_{\vv}(x)$ is defined by $A\to \pi(w_a), B\to \pi(w_b), C\to \pi(w_c)$,  where $\pi$ is the projection $a\to A, b\to B, c\to C$.

\medskip

Moreover, if $\uu$ is a fixed point of a primitive Sturmian morphism, then all its derivated sequences are fixed by some primitive Sturmian morphisms as well, see Corollary \ref{composition}. 
Thus the search for the morphism $\sigma$ can be simplified as $r', s' \in \{r, s\}^*$ are images of $r, s$ under a Sturmian morphism over the alphabet  $\{r,s\}$.

\begin{xmpl} \label{Exam_Fib2}
In Example \ref{Exam_Fib1} we have showed that the Rote sequence $\gg$ associated with the Fibonacci sequence $\ff$ has three derivated sequences $\dd_\gg(x^{(0)})$, $\dd_\gg(x^{(1)})$ and $\dd_\gg(x^{(2)})$.  
Now we find their fixing morphisms $\sigma_0$, $\sigma_1$, and $\sigma_2$, respectively.

Let us start with $\dd_\gg(x^{(0)})$. 
We have $\dd_\gg(x^{(0)}) = \dd_\gg(x^{(3)})$ and so $\dd_\ff(w^{(0)}) = \dd_\ff(w^{(3)})$. 
The return words to $w^{(0)}$ are $r = 0$, $s = 1$ and the return words to $w^{(3)}$ are $r' = 01001$, $s' = 010$.
Both $w^{(0)}$ and $w^{(3)}$ have the same type $SU(1)$.
Thus the return words to $x^{(0)}$ correspond to blocks
$$a:= \S(A0)=r = 0\,, \ \ \ b:= \S(B0)=srrs = 1001\,, \ \ \ c:= \S(C0)=srs = 101\, $$
and the return words to $x^{(3)}$ correspond to blocks
$$a'=r'= 01001\,, \ \ \ \ b'= s'r'r's' = 010 01001 01001 010\,, \ \ \ \ c'=s'r's' = 010 01001 010\,, $$
where we denote $a':= \S(A'0)$,  $ b':= \S(B'0)$ and $c':= \S(C'0)$.

If we decompose $a' = 01001$ into $a = 0$, $b = 1001$, $c = 101$, we get $a' = ab$. 
Similarly $b' = abaacaaca$ and $c' = abaaca$. 
So the fixing morphism $\sigma_0$ is defined as follows:
$$
\sigma_0:
\left\{
\begin{aligned}
 A &\to AB \\
B &\to ABAACAACA \\
C &\to ABAACA\,
\end{aligned}
\right.\,.
$$

Equivalently, we can also find $\sigma_0$ without knowledge of the return words since the Fibonacci sequence $\ff$ is the fixed point of the Fibonacci morphism. 
As shown in Example \ref{Exam_Fib1}, all derivated sequences of $\ff$ are equal to the Fibonacci sequence over the alphabet $\{r, s\}$ and so they are fixed by the morphism $\psi: r\to rs, s\to r$.
Since the sequence of types $(\mathcal{T}^{(n)})_{n \in \N}$ has a period $3$, the return words $r'$ and $s'$ satisfy
$r' = \psi^3(r)$ and $s'=\psi^3(s)$. 
Therefore, it is enough to factorize $a'= \psi^3(r)$,  $ b'=\psi^3(srrs)$ and $ c'=\psi^3(srs)$ into the blocks $a=r$, $b=srrs$ and $c=srs$. 
We get
$$\begin{array}{lllllllll}
a'&=& r'& =& \psi^3(r)& =& rsrrs&=& ab,\\
b'&=&s'r'r's'&=&\psi^3(srrs)& =& rsrrsrrsrsrrsrsr& =& abaacaaca,\\
c'&=&s'r's'&=&\psi^3(srs) &= &rsrrsrrsrsr & = &abaaca\,.
\end{array}$$
It exactly  corresponds to the morphism $\sigma_0$.

\medskip

Now we find a morphism $\sigma_1$ fixing the derivated sequence of $\gg$ to the prefix $x^{(1)}$. 
As the bispecial prefix $w^{(1)}$ has the type $ US(1)$, we work with the blocks $a=rr$, $b = rsr$ and $c=s$. 
We apply the same method and we get
$$\begin{array}{lllllllll}
a'&=& r'r'& =& \psi^3(rr)& =& rsrrsrsrrs&=& bbcac,\\
b'&=&r's'r'&=&\psi^3(rsr)& =& rsrrsrsrrsrrs & =& bbcacac,\\
c'&=&s'&=&\psi^3(s) &= &rsr & = &b\,.
\end{array}$$
Thus the derivated sequence $\dd_{\gg}(x^{(1)})$ is fixed by the morphism
$$
\sigma_1:
\left\{
\begin{aligned}
 A &\to BBCAC \\
B &\to BBCACAC \\
C &\to B\,
\end{aligned}
\right.\,.
$$

Finally, as $w^{(2)}$ has the type $UU(1)$, we work with the blocks $a =rr$, $b=rs$ and $c=sr$ and we get 

$$\begin{array}{lllllllll}
a'&=& r'r'& =& \psi^3(rr)& =& rsrrsrsrrs&=& baccb,\\
b'&=&r's'&=&\psi^3(rs)& =& rsrrsrsr & =& bacc,\\
c'&=&s'r'&=&\psi^3(sr) &= &rsrrsrrs & = &bacb\,.
\end{array}$$
Therefore, the morphism fixing the derivated sequence $\dd_{\gg}(x^{(2)})$ is
$$
\sigma_2:
\left\{
\begin{aligned}
 A &\to BACCB \\
B &\to BACC\\
C &\to BACB\,
\end{aligned}
\right.\,.
$$
\end{xmpl}
\medskip

We finish this section by an algorithm for finding the morphisms fixing the derivated sequences of Rote sequences.
To simplify the notation we use the cyclic shift operation  ${\rm cyc}: \{b, \beta\}^+\to \{b, \beta\}^+$ defined by
$${\rm cyc}(z_0z_1\cdots z_{Q-1}) = z_1z_2\cdots z_{Q-1}z_0\,.$$
By Corollary \ref{composition}, if $\zz = z^\infty $, where $z= z_0z_1\cdots z_{Q-1} \in \{b,\beta\}^+$, is a directive sequence of a Sturmian sequence $\uu$, then the derivated sequence  $\dd_\uu(w^{(n)})$ to the $n^{th}$ bispecial prefix of $\uu$ has the directive sequence $\zz^{(n)} =\bigl({\rm cyc}^n( z)\bigr)^\infty$, and thus $\dd_\uu(w^{(n)})$ is fixed by the morphism $\varphi_{{\rm cyc}^n( z)}$.

\begin{lgrthm} \label{Algo_FixingMorphism}
\smallskip
\mbox{} \newline
\noindent Input: $z \in \{b, \beta\}^* $ such that both $b$ and $\beta$ occur in $z$.
\smallskip

\noindent Output: the list of morphisms over $\{A,B,C\}$  fixing the derivated sequences of the Rote sequence $\vv$ associated with the fixed point of the  morphism $\varphi_z$.
\smallskip

1. Denote  $Q=|z|$, $\zz = z_0z_1z_2\cdots = z^\infty$  and $H =M_z$.
\smallskip

2. Find the minimal $q\in\{1,2,3\}$ such that $H^q =I \mod 2$.
\smallskip

3. For $i=0,1,2,\ldots,  qQ-1$ do:

\begin{enumerate}
\item Compute $P^{(i)} = M_{z_0}M_{z_1}\cdots M_{z_{i-1}}O_{z_i} \mod 2$ and determine the type $\mathcal{T}^{(i)}$.
\item Rewrite $\varphi_{{\rm cyc}^i( z)}$ into the alphabet $\{r,s\}$ by the rule 
$0 \to r, 1 \to s$ if the first letter of ${\rm cyc}^i( z)$ is $b$ and by the rule $0 \to s, 1 \to r$ otherwise  
Denote this morphism $\psi$.
\item Define  $a,b,c \in \{r,s\}^+$ according to the type $\mathcal{T}^{(i)}$.
\item Compute  $ \psi^q(a)$, $ \psi^q(b)$, $\psi^q(c)$.
\item Decompose the words $ \psi^q(a)$, $ \psi^q(b)$, $\psi^q(c)$ into the blocks $a,b,c$, i.e.,
find $w_a, w_b, w_c \in \{a,b,c\}^+$  \  such that  \
 $ \psi^q(a) = w_a, \  \psi^q(b) = w_b,  \    \psi^q(c) = w_c$.
\item Put into the list the morphism $\sigma_i:$ $A\to \pi(w_a)$, $B\to \pi(w_b)$, $C\to \pi(w_c)$, where the projection $\pi$ rewrites $a\to A$, $b\to B$, $c\to C$.
\end{enumerate}
\end{lgrthm}

\begin{xmpl}[Example \ref{Exam_BBetaB4} continued] \label{Exam_BBetaB5}
Our aim is to describe the derivated sequences of the Rote sequence associated with the fixed point of $\varphi_{b \beta b}$. 
Each its derivated sequence is fixed by a primitive morphism which we find with Algorithm \ref{Algo_FixingMorphism} to the input $z=b\beta b$.

1. $Q=|b\beta b| = 3$,  $\zz = (b\beta b)^\infty$ and  $H=M_b M_\beta M_b =
\left(\begin{array}{cc}
0 & 1\\
1 & 0\\
\end{array} \right) $.

2. $q=2$ as $H^2 = I \mod 2$.
\smallskip

3. For $i=0,1, ..., 5$  (we illustrate the step only for $i=2$) do:

\begin{enumerate}
\item  $P^{(2)} = M_b M_\beta O_b =
\left(\begin{array}{cc}
0 & 1\\
1 & 1\\
\end{array} \right) \mod 2$ \  and $z_2z_3z_4 \cdots = bb\beta \cdots$, thus  \ \  $\mathcal{T}^{(2)}=UU(2)$.
\item  $\varphi_{{\rm cyc}^2(b\beta b)} = \varphi_{b b \beta} : \left\{\begin{array}{l}0\to 0010\\1\to 001
\end{array}\right.$ \quad  and thus  \ \ $\psi : \left\{\begin{array}{l}r\to rrsr\\s\to rrs
\end{array}\right.$.
\medskip
\item  $a=rr$ , $b=rs$,  $c=sr$.
\medskip
\item   Since  \ $\psi^2(r) =  rrsr rrsr   rrs   rrsr$ \ and \ $\psi^2(s) =rrsr rrsr   rrs $, \  we have

\medskip

$\psi^2(a) =\psi^2(rr) =  rrsr rrsr   rrs   rrsr  rrsr rrsr   rrs   rrsr   $;

\medskip

$\psi^2(b) =\psi^2(rs)  =  rrsr rrsr   rrs   rrsr  rrsr rrsr   rrs  $;
\medskip

$\psi^2(c) = \psi^2(sr)  =    rrsr rrsr   rrs     rrsr rrsr   rrs   rrsr  $.
\medskip

\item   $\psi^2(a) =\underbrace{rr}_a\underbrace{sr}_c \underbrace{rr}_a \underbrace{sr}_c   \underbrace{rr}_a\underbrace{sr}_c \underbrace{rs}_b \underbrace{rr}_a\underbrace{rs}_b\underbrace{rr}_a\underbrace{rs}_b \underbrace{rr}_a\underbrace{rs}_b \underbrace{rr}_a\underbrace{sr}_c $,

thus $w_a = acacacbabababac$;

\medskip

\noindent $\psi^2(b) =\underbrace{rr}_a\underbrace{sr}_c \underbrace{rr}_a \underbrace{sr}_c   \underbrace{rr}_a\underbrace{sr}_c \underbrace{rs}_b \underbrace{rr}_a\underbrace{rs}_b\underbrace{rr}_a\underbrace{rs}_b \underbrace{rr}_a\underbrace{rs}_b$,

thus $w_b=acacacbababab$;

\medskip

\noindent $\psi^2(c)=\underbrace{rr}_a\underbrace{sr}_c \underbrace{rr}_a \underbrace{sr}_c   \underbrace{rr}_a\underbrace{sr}_c \underbrace{rs}_b \underbrace{rr}_a\underbrace{rs}_b\underbrace{rr}_a\underbrace{rs}_b \underbrace{rr}_a\underbrace{sr}_c $,

thus $w_c = acacacbababac$.

\medskip

\item We add to the list the morphism
$$
\sigma_2:
\left\{
\begin{aligned}
A &\to ACACACBABABABAC \\
B &\to ACACACBABABAB\\
C &\to ACACACBABABAC\,
\end{aligned}
\right.\,.
$$
\end{enumerate}
\end{xmpl}

\section{Original Rote's construction and morphisms on four versus three letter alphabet} \label{otherview}

In the original Rote's paper \cite{Ro94}, the author also construct Rote sequences as projections of fixed points on a four letter alphabet.  
Let us define the morphism $\xi$ and the projection $\pi$ as follows:
$$
\xi:
\left\{
\begin{aligned}
1 &\to 13 \\
2 &\to 24 \\
3 &\to 241\\
4 &\to 132\,
\end{aligned}
\right.\,
\quad \quad  \text{and} \quad \quad
\pi:
\left\{
\begin{aligned}
1 &\to 0\\
2 &\to 1 \\
3 &\to  0\\
4 &\to  1\,
\end{aligned}
\right.\,.
$$
The morphism $\xi$ has two fixed points. We take its fixed point  ${\bf q}$ starting with the letter $1$, i.e.
$${\bf q}=13 24 124 13 213 24132 13241 2413 241  241321324124132412413\cdots\,,$$
and we apply the projection $\pi$ to construct the complementary symmetric Rote sequence:
$${\bf r} = \pi({\bf q})=
00110110010011001001101100110110010011011001101100 \cdots \,.$$
Using the operation $\S$ (see Definition \ref{S}), we obtain  the associated Sturmian sequence 
$${\bf s}= \S\bigl({\bf r}\bigr)=
0101101011010101101011010101101011010110101011010 \cdots\,.$$

This example was in fact the beginning of this work.  
We notice that if we add the first letter $1$ to the associated Sturmian sequence $\bf s$, we get the Sturmian sequence
$${\bf u}=1{\bf s}=
101011010110101011010110101011010110101101010110101 \cdots$$
which is also fixed by a morphism, namely by the  morphism:
$$
\psi :
\left\{
\begin{aligned}
0 &\to 101 \\
1 &\to 10\,
\end{aligned}
\right.\,.
$$
The question is how to link this Rote's example to our construction.

In fact, the morphism $\psi$ is a standard Sturmian morphism with the decomposition
$\psi = \varphi_{\beta}\varphi_b E$. 
Thus we can use our techniques to find the return words and the derivated sequence to the prefix $x = 01$ of the Rote sequence
$$\vv=
0110010011011001101100100110010011011001001100100110 \cdots\,.$$
associated with the sequence $\bf u$. 
We obtain the return words $A=0110$, $B=010$ and $C= 011$. The derivated sequence $\dd_{\vv}(x)$ of $\vv$ to the prefix $x=01$ is fixed by the morphism
$$
\sigma:
\left\{
\begin{aligned}
A &\to ABC\\
B &\to AC \\
C &\to AB\,
\end{aligned}
\right.\,.
$$
The Rote sequence $\vv$ is clearly the image of the fixed point of $\sigma$, i.e. the derivated sequence $\dd_{\vv}(x)$, under the projection $\rho$ defined as:
$$
\rho(A)=0110\,,\quad \rho(B)=010\,,\quad \rho(C)= 011\,.
$$

As ${\bf u} = 1 {\bf s}$, their associated Rote sequences $\vv$ and ${\bf r}$ are tied by
$$ \vv = 011001001101100110110010011 \cdots =  
\overline{100110110010011001001101100} \cdots =  \overline{1{\bf r}} \,.
$$ 
Moreover, all return words to $01$ in $\vv$ obviously start with $0$. 
Thus the original Rote sequence ${\bf r}$ is the image of the fixed point of $\sigma$ under the projection $\rho'$ defined as
$$
\rho'(A)=\overline{1100}=0011,\quad \rho'(B)=\overline{100}=011,\quad \rho'(C)=\overline{110}=001.
$$
In other words, ${\bf r} = \pi({\bf q}) = \rho'(\dd_{\vv}(x))\,.$

The morphism $\xi$ on a four letter alphabet can be recovered as follows.
We write the images of the projection $\rho'$ as suitable projections by $\pi$, more precisely
$$\rho'(A)=0011=\pi(1324),\quad \rho'(B)=011= \pi(124),\quad \rho'(C)=001=\pi(132)\,.$$
Then the projections of the images of the morphism $\sigma$ can be expressed as
\begin{align*}
\rho'(\sigma(A)) &= \rho'(ABC)= \pi(1324124132) = \pi(\xi(1)\xi(3)\xi(2)\xi(4))\,, \\
\rho'(\sigma(B)) &= \rho'(AC)=\pi(1324132) = \pi(\xi(1)\xi(2)\xi(4))\,, \\
\rho'(\sigma(C)) &= \rho'(AB)=\pi(1324124) = \pi(\xi(1)\xi(3)\xi(2))\,,
\end{align*}
and by recoding to the suffix code $\{13, 24, 241, 132\}$ we get the morphism $\xi$ as above. 

\medskip

Finally, let us mention that we are able to generalize the original example as follows. For $n \in \N$ consider the morphism $\xi_n$ defined as
$$
\xi_n:
\left\{
\begin{aligned}
1 &\to 13 \\
2 &\to 24 \\
3 &\to (2413)^n241\\
4 &\to (1324)^n132\,
\end{aligned}
\right.\,.
$$
Then the $\pi$ projection of its fixed point $\bf r$ starting with $1$ can be seen as the $\rho'$ projection of the fixed point of the morphism $\sigma_n$, where $\sigma_n$ and $\rho'$ are the following: 
$$
\sigma_n:
\left\{
\begin{aligned}
A &\to A^nABA^nC\\
B &\to A^n AC \\
C &\to A^n AB\,
\end{aligned}
\right.\,
\quad \quad
\text{and} 
\quad \quad
\rho':
\left\{
\begin{aligned}
A &\to 0011\\
B &\to 011 \\
C &\to 001\,
\end{aligned}
\right.\,.
$$

Nevertheless, our technique with morphisms on a three letter alphabet is more natural and is based on the nice properties of return words and derivated sequences. 
This is why we have chosen to write the paper to develop the whole theory of substitutive Rote sequences. 

\begin{rmrk} 
The Rote sequence ${\bf r}$ from the beginning of this section is connected to the fixed point ${\bf u}$ of the Sturmian morphism $\psi = \varphi_{\beta}\varphi_b E$. 
Of course, ${\bf u}$  is fixed also by the morphisms $\psi^2= \varphi_{\beta bb\beta }$ which we have studied in Remark \ref{exceptional}. 
It can be shown that the Rote sequence $\vv$ associated with this $\bf u$ is exceptional among all Rote sequences associated with standard Sturmian sequences since it has only two distinct derivated sequences to its prefixes. The other Rote sequences have at least three derivated sequences.
\end{rmrk}
\section{Comments}
• In this paper, we have studied only the Rote sequences whose associated Sturmian sequences are standard. 
By definition, the intercept of a standard Sturmian sequence $\uu$ is equal to $1-\alpha$, where $\alpha$ is the density of  the letter $0$ in $\uu$. 
For such a sequence we have used its S-adic representation $\zz$ consisting of the morphisms $\varphi_b:0\to 0, 1\to 01$ and $\varphi_\beta: 0\to 10, 1\to 1$.
In particular, we have used the result from \cite{KlMePeSt18} which says that each suffix of $\zz$ represents a derivated sequence of $\uu$.

In \cite{Dek17} M. Dekking studied properties of some submonoids of the Sturmian monoid.  
In particular, he considered the submonoid (we kept  his notation) $\mathcal{M}_{3,8}$ generated by two morphisms $\psi_3: 0\to 0, 1\to 01$ and $\psi_8:0\to 01, 1\to 1$.  
Theorem 3 from \cite{Dek17} says that any fixed point $\uu$ of a primitive morphism from $\mathcal{M}_{3,8}$ is a Sturmian sequence with the intercept $0$.  
Obviously, this $\uu$ has an $S$-adic representation
$\zz$ consisting of the morphisms $\psi_3$ and $\psi_8$.
But unlike the case of standard Sturmian sequences, in this case only some of suffixes of $\zz$ represent derivated sequences to prefixes of $\uu$, see \cite{KlMePeSt18}. 
It would be interesting to know how this fact influences the set of derivated sequences of a Rote sequence associated with a Sturmian sequence with the intercept $0$.

• The definition of derivated sequences of $\uu$ as introduced in \cite{Dur98} takes into account only the prefixes of $\uu$. 
Recently, Yu-Ke Huang and Zhi-Ying Wen in \cite{HuWe17} have considered also the derivated sequences of $\uu$ to non-prefix factors of $\uu$.
Recall that if $\uu$ is a fixed point of a primitive morphism, then by Durand's result from \cite{Dur98}, any derivated sequence to a prefix of $\uu$ is fixed by a primitive morphism as well. 
However, a derivated sequence to a non-prefix factor of $\uu$ need not to be fixed by a non-identical morphism at all.

Huang and Wen study the period-doubling sequence ${\bf p}$, i.e. the sequence fixed by the morphism $0\mapsto 11, 1\mapsto 10$.   
They show that there exist two sequences $\Theta_1$ and $\Theta_2$ such that any derivated sequence ${\bf d}$ to a factor of $\uu$ is equal to $\Theta_1$ or $\Theta_2$.
Moreover, any derivated sequence ${\bf d}'$ to a factor of ${\bf d}$ is equal to $\Theta_1$ or $\Theta_2$ and any  derivated sequence ${\bf d}''$ to a factor of  ${\bf d}'$ is equal to $\Theta_1$ or $\Theta_2$, etc.
They called this property Reflexivity. 
It may be interesting to look for sequences with Reflexivity among Rote or Sturmian sequences. 

Let us note that the period-doubling sequence $\bf p$ and the Thue-Morse sequence ${\bf t}$, i.e. the sequence fixed by the morphism $0\mapsto 01, 1\mapsto 10$, are linked with the same mapping $\S$ which associates the Rote and Sturmian sequences (see Definition \ref{S}): $\S({\bf t}) = {\bf p}$.

• By Corollary \ref{Thm_Number}, a Rote sequence $\vv$ associated with a fixed point $\uu$ of a primitive standard Sturmian morphism $\varphi_z$ has at most $3|z|$ distinct derivated sequences.
On the other hand, if $\varphi_z$ is not a power of any other morphism, then the Sturmian sequence $\uu$ has exactly $|z|$ distinct derivated sequences (see \cite{KlMePeSt18}) and thus by Corollary \ref{Thm_DetermineDerivatedWord}, the Rote sequence $\vv$ has at least $|z|$ distinct derivated sequences. 
In all examples for which we have listed the derivated sequences of the fixed point of such a $\varphi_z$, the actual number of derivated sequences was  $|z|$, $2|z|$ or $3|z|$.
We do not know whether some other values can also appear.

\section*{Acknowledgement}  
The research received funding from the Ministry of Education, Youth and Sports of the Czech Republic through the project no. CZ.02.1.01/0.0/0.0/16\_019/0000778, from the Czech Technical University in Prague through the project SGS17/193/OHK4/3T/14, and from the French Agence Nationale de la Recherche (ANR) through the GlobNets

\end{document}